\documentclass[12pt,reqno]{amsart}
\usepackage{amsmath,amsthm,amssymb,amsfonts,amscd}
\usepackage{mathrsfs}
\usepackage{bbm}
\usepackage{bbding}

\usepackage{bm}
\usepackage{hyperref}

\pdfstringdefDisableCommands{%
    \renewcommand*{\bm}[1]{#1}%
    }

\usepackage{geometry}\geometry{margin=1in}
\usepackage{color}
\usepackage{xcolor}

\usepackage{picture,epic}
\usepackage{tikz}




\numberwithin{equation}{section}

\setcounter{footnote}{0}

\theoremstyle{plain}
\newtheorem{theorem}{Theorem}[section]
\newtheorem{lemma}[theorem]{Lemma}

\newtheorem{proposition}[theorem]{Proposition}

\theoremstyle{definition}

\theoremstyle{remark}
\newtheorem{remark}[theorem]{Remark}

\renewcommand{\Re}{\operatorname{Re}}

\newcommand{\sgn}{\operatorname{sgn}}

\newcommand{\supp}{\operatorname{supp}}

\newcommand{\GL}{\operatorname{GL}}
\newcommand{\SL}{\operatorname{SL}}

\renewcommand{\mod}{\operatorname{mod}\ }

\newcommand{\dd}{\mathrm{d}}


\makeatletter
\def\@tocline#1#2#3#4#5#6#7{\relax
  \ifnum #1>\c@tocdepth 
  \else
    \par \addpenalty\@secpenalty\addvspace{#2}%
    \begingroup \hyphenpenalty\@M
    \@ifempty{#4}{%
      \@tempdima\csname r@tocindent\number#1\endcsname\relax
    }{%
      \@tempdima#4\relax
    }%
    \parindent\z@ \leftskip#3\relax \advance\leftskip\@tempdima\relax
    \rightskip\@pnumwidth plus4em \parfillskip-\@pnumwidth
    #5\leavevmode\hskip-\@tempdima
      \ifcase #1
       \or\or \hskip 1em \or \hskip 2em \else \hskip 3em \fi%
      #6\nobreak\relax
    \hfill\hbox to\@pnumwidth{\@tocpagenum{#7}}\par
    \nobreak
    \endgroup
  \fi}
\makeatother

\begin{document}

\title[Hybrid subconvexity bounds for twists of $\GL(3)\times \GL(2)$ $L$-functions]
{Hybrid subconvexity bounds for twists of \\ $\GL(3)\times \GL(2)$ $L$-functions}

\author{Bingrong Huang and Zhao Xu}

\address{Data Science Institute and School of Mathematics \\ Shandong University \\ Jinan \\ Shandong 250100 \\China}
\email{brhuang@sdu.edu.cn}

\address{School of Mathematics \\ Shandong University \\ Jinan \\ Shandong 250100 \\China}
\email{zxu@sdu.edu.cn}

\subjclass[2010]{11F66, 11F67}

\date{\today}

\begin{abstract}
  In this paper, we solve the hybrid subconvexity problem for $\GL(3)\times \GL(2)$ $L$-functions twisted by a primtive Dirichlet charater modulo $M$ (prime) in the $M$- and $t$-aspects. We also improve hybrid subconvexity bounds for twists of $\GL(3)$ $L$-functions in the $M$- and $t$-aspects.
\end{abstract}

\keywords{Hybrid subconvexity, twists, $\GL(3)\times \GL(2)$ $L$-functions, delta method}


\thanks{This work was in part supported by  the National Key Research and Development Program of China  2021YFA1000700 and  NSFC  12031008.
B.H. was also supported by NSFC 12001314 and the Young Taishan Scholars Program.
Z.X. was aslo supported by Natural Science Foundation of Shandong Province ZR2019MA011.
}

\maketitle

\section{Introduction} \label{sec:Intr}

The subconvexity problem of automorphic $L$-functions on the critical line is one of the central problems in number theory.
In general, let $\mathcal{C}$ denote the analytic conductor of the relevant $L$-function,
then one hopes to obtain a subconvexity bound $\mathcal{C}^{1/4-\delta}$ for some $\delta>0$
on the critical line $\Re s=1/2$.
Subconvexity bounds have many very important applications such as the equidistribution problems. For the $\GL(1)$ case, i.e., the Riemann zeta function and Dirichlet $L$-functions, subconvexity bounds are known for a long time thanks to Weyl \cite{Weyl} and Burgess \cite{Burgess}. For the last decades, many cases of $\GL(2)$ $L$-functions have been treated (see Michel--Venkatesh \cite{MichelVenkatesh} and the references therein).
In the recent ten years, people have made progress on $\GL(3)$ $L$-functions (see \cite{li2011bounds,blomer2012subconvexity,Munshi2015circleIII,Munshi2015circleIV,Munshi2018,Sharma2019} etc.).
In this paper, we extend the techniques to prove, for the first time, hybrid subconvexity bounds for $\GL(3)\times \GL(2)$ $L$-functions twisted by a primtive Dirichlet charater modulo $M$ (prime), which generalizes the best known bounds in the $M$- and $t$-aspects simultaneously. Our method also improves hybrid subconvexity bounds for twists of $\GL(3)$ $L$-functions due to \cite{Huang2019, Lin}.


Let $\pi$ be a Hecke--Maass cusp form of type $(\nu_1,\nu_2)$ for $\SL(3,\mathbb{Z})$
with the normalized Fourier coefficients $A(m,n)$.
The $L$-function of $\pi$ is defined as
\[ L(s,\pi) = \sum_{n\geq1} \frac{A(1,n)}{n^s}, \quad \Re(s)>1. \]
Let $f$ be a Hecke--Maass cusp form with the spectral parameter $t_f$ for $\SL(2,\mathbb{Z})$, with the normalized Fourier coefficients $\lambda_f(n)$.
The $L$-function of $f$ is defined by
\[
  L(s,f) = \sum_{n\geq1} \frac{\lambda_f(n)}{n^s}, \quad \Re(s)>1.
\]
Let $\chi$ be a primitive Dirichlet character modulo $M$.
The $\GL(3)\times \GL(2)\times \GL(1)$  Rankin--Selberg $L$-function is defined as
\[
  L(s,\pi\times f \times \chi) = \sum_{m\geq1}\sum_{n\geq1} \frac{A(m,n)\lambda_f(n)\chi(m^2n)}{(m^2n)^s}, \quad \Re(s)>1.
\]
Those $L$-functions have analytic continuation to the whole complex plane.
In this paper, we consider the $L$-values at the  point $1/2+it$.
The Phragm\'en--Lindel\"of principle implies the convexity bounds
\[
  L(1/2+it,\pi \times f\times \chi) \ll_{\pi,f,\varepsilon} (M(1+|t|))^{3/2+\varepsilon}.
\]
It is konwn
that the Riemann hypothesis for $L(s,\pi \times f\times \chi)$ implies the Lindel\"of hypothesis, i.e.,
$L(1/2+iT,\pi \times f\times \chi) \ll_{\pi,f,\varepsilon} (M(1+|t|))^{\varepsilon}.$
For $M=1$, the first subconvex exponent in $t$-aspect was obtained by Munshi \cite{Munshi2018}.
Recently, Lin--Sun \cite{LinSun} stated that
  \[
    L(1/2+it,\pi\times f)
    \ll_{\pi,f,\varepsilon}  (1+|t|)^{3/2-3/20+\varepsilon}.
  \]
For $t=0$ and prime $M$, it is proved in Sharma \cite{Sharma2019} that
  \[
    L(1/2,\pi\times f\times \chi)
    \ll_{\pi,f,\varepsilon}  M^{3/2-1/16+\varepsilon}.
  \]
In the context of $L$-functions, obtaining hybrid bounds that perfectly combine the two aspects is a difficult problem.
Our main result in this paper is the following subconvexity bounds.

\begin{theorem}\label{thm:subconv3*2}
  With the notation as above. Let $t\in\mathbb{R}$ and $M$ be prime. Then we have
  \[
    L(1/2+it,\pi\times f\times \chi)
    \ll_{\pi,f,\varepsilon}  M^{3/2-1/16+\varepsilon} (1+|t|)^{3/2-3/20+\varepsilon}.
  \]
\end{theorem}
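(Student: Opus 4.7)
The strategy is to adapt the conductor-lowering delta method of Munshi, as developed by Sharma for the $M$-aspect and by Lin--Sun for the $t$-aspect, by combining the two conductor reductions into a single argument that saves in both parameters simultaneously. First, by the approximate functional equation together with a dyadic decomposition, I would reduce matters to estimating the smoothed sum
\[
S(N) \;=\; \sum_{n \asymp N} A(1,n)\,\lambda_f(n)\,\chi(n)\, n^{-it}\, V(n/N)
\]
for $N \ll (M(1+|t|))^{3+\varepsilon}$; the contributions with $m > 1$ from $A(m,n)$ are handled via the Rankin--Selberg bound $\sum_{m^2 n \le X}|A(m,n)|^2 \ll X^{1+\varepsilon}$. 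The convexity estimate corresponds to $S(N) \ll N^{1+\varepsilon}$, and I need a saving of $M^{1/16}(1+|t|)^{3/20}$ over it.

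Next I would introduce an auxiliary parameter $K$ with $1 \le K \le N/M$ and detect the diagonal $n_1 = n_2$ of the separated double sum by the DFI delta expansion with modulus $qM$, $q \le Q = \sqrt{N/(MK)}$; the factor $M$ in the modulus is costless because $\chi(n_1)\overline{\chi}(n_2) = 1$ on the diagonal. For the $t$-aspect I would insert an extra integration in a variable of length $K$, which allows me to replace $n_2^{-it}$ by the smoother factor $(n_1/n_2)^{it}$ up to negligible error on the support of the delta, effectively lowering the $t$-conductor by a factor of $K$. Then I would apply $\GL(3)$ Voronoi to the $n_1$-sum (dual variable of size $\sim (qM)^3/N^2$, twisted by hyper-Kloosterman sums mod $qM$), $\GL(2)$ Voronoi to the $n_2$-sum (dual length $\sim (qM)^2(1+|t|)^2/N$ due to the $n_2^{-it}$ oscillation), and Poisson summation over the residue class $a \bmod qM$ to split the $\chi$-twist into a Gauss sum mod $M$ times a Kloosterman-type sum mod $q$.

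Finally, I would apply Cauchy--Schwarz in the $\GL(3)$ dual variable to discard the coefficients $A(1,\cdot)$ via Rankin--Selberg, open the square, and apply Poisson summation in that variable once more. The result splits into a diagonal contribution of order $N/K^{1/2}$ and an off-diagonal contribution which, after evaluating the resulting Gauss and Kloosterman sums and treating the oscillatory integral by repeated stationary phase in the archimedean variables, admits a bound of the schematic form $N\,K^{1/2}/(M^{\alpha}(1+|t|)^{\beta})$ for suitable $\alpha, \beta > 0$. Choosing $K$ to balance the two contributions then yields the hybrid saving $M^{1/16}(1+|t|)^{3/20}$.

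The main obstacle I anticipate is the \emph{uniform hybrid analysis}: the stationary phase treatment of the oscillatory integrals produced by the dual Voronoi transforms has to be carried out uniformly across the full range of $q$, $K$, $M$, and $(1+|t|)$, including the transitional regime where the $M$-driven and $t$-driven phases have comparable size. One also has to verify that the Gauss sums mod $M$ combine with the Kloosterman sums mod $q$ without losing the full factor of $M$ in the denominator, and that the dilation arising from the $t$-conductor lowering enters the stationary phase analysis with the correct sign. This is precisely the point at which the Sharma and Lin--Sun arguments must be fused with genuine uniformity in both variables.
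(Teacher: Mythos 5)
Your proposal captures several of the correct high-level ingredients (approximate functional equation, DFI delta method with modulus $qM$, dual Voronoi in both variables, Cauchy--Schwarz and Poisson in the $\GL(3)$ dual variable), but there is a genuine gap: you have omitted the amplification step over primes $\ell \in [L,2L]$, which is indispensable for the $M$-aspect saving. In the paper, one writes $A(1,\ell) A(r,n) = A(r,\ell n) + (\text{lower order})$, averages over $\ell \in \mathcal{L}$ (primes in $[L,2L]$), and then detects the condition $n = m\ell$ (not $n_1 = n_2$) via the delta method, with $Q = (NL/MK)^{1/2}$. After Cauchy--Schwarz in the $\GL(3)$ dual variable and Poisson, the zero-frequency contribution carries an $L^{-1/4}$ factor, and it is precisely the choice $L = M^{1/4}$ that produces the $M^{-1/16}$ saving. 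If you set $L = 1$, as your proposal implicitly does by taking $Q = \sqrt{N/(MK)}$ with no $L$, the zero-frequency term becomes of size $\approx N^{3/4} M^{3/4} K^{3/4}$, which (with $N = M^3 t^3$ and $K = t^{4/5}$) is exactly $M^{1/16}$ too large to be subconvex in $M$. So no choice of $K$ alone can give the claimed $M$-exponent; the amplifier is essential, and it is the mechanism inherited from Sharma's $M$-aspect proof.

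A secondary difference is that you propose to reintroduce Munshi's $t$-conductor-lowering integral of length $K$. The paper deliberately drops this (following Aggarwal, Huang, and Lin--Sun): the $x$-integral already present in the DFI expansion $\delta(n) = \frac{1}{Q}\sum_q \frac{1}{q}\sum_a^\star e(na/q)\int g(q,x)e(nx/qQ)\,\dd x$ plays the identical role, and inserting the auxiliary integral is redundant once $Q$ is chosen as $(NL/MK)^{1/2}$. Carrying both would complicate the stationary-phase analysis without improving the exponent; in fact, the cleaner route via the $x$-integral is what lets the paper match the Lin--Sun $t$-exponent $3/2 - 3/20$ exactly. So to salvage your plan, you should (i) insert the $\ell$-amplification and the Hecke relation at the start, reducing to $S_1(N)$ with the detection of $M \mid (n - m\ell)$ via a sum over $b \bmod M$, and (ii) drop the extra $K$-integral, putting $K$ only into the size of $Q$.
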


\begin{remark}
  Below we will carry out the proof under the assumption $t\geq M^\varepsilon$ for some small $\varepsilon>0$. For the case $t\ll M^\varepsilon$, one can extend the method of Sharma \cite{Sharma2019} to prove $L(1/2+it,\pi\times f\times \chi)
    \ll_{t,\pi,f,\varepsilon}  M^{3/2-1/16+\varepsilon}$ with polynomial dependence on $t$.
  For the case $t\leq -M^\varepsilon$, the same result follows from the case $t\geq M^\varepsilon$ by the functional equation.
\end{remark}

\begin{remark}
  Let $\pi$, $\chi$ and $t$ be the same as above and $f$ be a weight $k$ Hecke modular form for $\SL(2,\mathbb{Z})$. The same hybrid subconvexity bounds for $L(1/2+it,\pi\times f\times \chi)$ can be proved by our method.
\end{remark}

Note that by the Hecke relation of the Fourier coefficients (see Goldfeld \cite[Theorem 6.4.11]{goldfeld2006automorphic}), we have
\[
  A(1,m)A(1,n) = \sum_{d\mid (m,n)} A\left(d,\frac{mn}{d^2}\right).
\]
Hence we have
\[
  L(s,\pi\times \chi)^2 = \sum_{m\geq1}\sum_{n\geq1} \frac{A(m,n)\tau(n) \chi(m^2n)}{(m^2n)^s}, \quad \Re(s)>1,
\]
where $\tau(n)=\sum_{d\mid n}1$ is the divisor function which
is the coefficient of the Eisenstein series for $\SL(2,\mathbb{Z})$.
The subconvexity bounds for $L(1/2+it,\pi\times\chi)$ follow from bounds for $L(1/2+it,\pi\times f\times \chi)$ with $f$ being a $\GL(2)$ Eisenstein series.

\begin{theorem}\label{thm:subconv3}
  With the notation as above. Let $t\in\mathbb{R}$ and $M$ be prime. Then we have
  \[
    L(1/2+it,\pi\times \chi) \ll_{\pi,\varepsilon} M^{3/4-1/32+\varepsilon} (1+|t|)^{3/4-3/40+\varepsilon}.
  \]
\end{theorem}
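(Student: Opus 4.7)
The plan is to deduce Theorem~\ref{thm:subconv3} from an Eisenstein-series analogue of Theorem~\ref{thm:subconv3*2}. At $s=1/2$, the minimal parabolic Eisenstein series $E(z,s)$ for $\SL(2,\mathbb{Z})$ has normalised $n$-th Fourier coefficient equal to the divisor function $\tau(n)$, and satisfies $L(s,E(\cdot,1/2))=\zeta(s)^2$. As already recorded in the display preceding the statement of Theorem~\ref{thm:subconv3},
\[
L(s,\pi\times E(\cdot,1/2)\times\chi)=\sum_{m,n\geq 1}\frac{A(m,n)\tau(n)\chi(m^2n)}{(m^2n)^s}=L(s,\pi\times\chi)^2.
\]
Granted a bound of the same shape as Theorem~\ref{thm:subconv3*2} with the cusp form $f$ replaced by $E(\cdot,1/2)$, taking absolute values at $s=1/2+it$ and extracting square roots yields exactly the exponents $3/4-1/32$ in $M$ and $3/4-3/40$ in $|t|$ claimed in Theorem~\ref{thm:subconv3}.

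It therefore remains to verify that the proof of Theorem~\ref{thm:subconv3*2} continues to function when $f$ degenerates to the Eisenstein series $E(\cdot,1/2)$. The role of $f$ in that proof is threefold: its $\GL(2)$ Voronoi summation formula, a Ramanujan-on-average control of $\lambda_f(n)$, and the fact that its spectral parameter $t_f$ is a fixed constant. Each is at least as good in the Eisenstein case: the Voronoi formula for the divisor function is classical and has a Bessel-kernel transform of the same shape as its cuspidal counterpart; the trivial estimate $\tau(n)\ll_\varepsilon n^\varepsilon$ supersedes any Ramanujan-type input; and the spectral parameter is simply $0$, which only simplifies the stationary-phase analysis that drives the $t$-aspect saving. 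The delta-method decomposition, the reciprocity and Poisson summations, and the final Cauchy--Schwarz and lattice-point counting arguments are all insensitive to whether the $\GL(2)$ input is cuspidal or Eisenstein.

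The main obstacle is thus bookkeeping rather than new analysis: one must confirm that no step in the proof of Theorem~\ref{thm:subconv3*2} tacitly exploits the cuspidality of $f$, for instance in discarding a residual contribution arising from a contour shift or in controlling a putative diagonal term. Since $\pi$ is cuspidal on $\GL(3)$ and the twist by $\chi$ is nontrivial, no such obstruction is anticipated. Finally, the regime $|t|\ll M^{\varepsilon}$ lies outside the hypothesis under which Theorem~\ref{thm:subconv3*2} is proved, but there the claim follows directly (after squaring and using the factorisation above) from Sharma's $M$-aspect bound noted in the Remark, combined with the convexity bound in the $t$-aspect.
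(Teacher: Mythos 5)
Your overall strategy matches the paper's exactly: use the identity $L(s,\pi\times\chi)^2 = L(s,\pi\times E\times\chi)$, run the proof of Theorem~\ref{thm:subconv3*2} with the $\GL(2)$ input replaced by the Eisenstein series, and take square roots of the resulting exponents. This is precisely how the paper frames Theorem~\ref{thm:subconv3}, as the sentence preceding it and the subsequent Remark make clear.

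However, the way you dismiss the one genuine technical difference is not right. The Voronoi summation formula for $\tau(n)$, unlike Lemma~\ref{lem:VSF2} for a cusp form, carries an additional \emph{main term} coming from the double pole of $\zeta(s)^2$ at $s=1$ (the ``zero frequency'' of the dual sum). You acknowledge that such a ``residual contribution arising from a contour shift'' might exist, but then assert it causes no trouble ``since $\pi$ is cuspidal on $\GL(3)$ and the twist by $\chi$ is nontrivial.'' That is not the mechanism that makes this term negligible here, and it is not the reason the paper gives. The $\tau$-Voronoi is applied to the $m$-sum, whose test function carries the factors $m^{-it}$ and $e(-m\ell x/(MqQ))$; after extracting the polar main term one is left with an un-summed integral against this oscillating weight, and it is repeated integration by parts in the $m$ (or $\xi$) variable, exploiting $t\geq M^\varepsilon$ (and the $x$-oscillation in the complementary range), that shows the main term is $O(t^{-A})$. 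This is what the paper's Remark after Theorem~\ref{thm:subconv3} says explicitly: ``in the generic case, the weight function for the sum of $\tau(n)$ is oscillating. By integration by parts, we can show its contribution is negligibly small.'' Cuspidality of $\pi$ and nontriviality of $\chi$ play no role at this point --- they would at best control the \emph{outer} $n$-sum, but only after one has already shown the inner main term has some decay, and in the regime $t\ll M^\varepsilon$ (where there is no $t$-oscillation) a separate argument is needed, as you do correctly note. So the approach is right, but the key justification you offer for neutralising the extra Voronoi term should be replaced by the integration-by-parts argument against the $m^{-it}$ oscillation.
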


\begin{remark}
  The only difference in the proofs of Theorem \ref{thm:subconv3} and Theorem \ref{thm:subconv3*2} is that we need to use the Voronoi summation formula for $\tau(n)$ instead of those for Fourier coefficients of a $\GL(2)$ cusp form. This will give us another zero frequency contribution in the dual sum. This contribution will not have any effect on the final result. Indeed, in the generic case, the weight function for the sum of $\tau(n)$ is oscillating. By integration by parts, we can show its contribution is negligibly small.
\end{remark}

This improves the hybrid subconvexity bounds for twists of $\GL(3)$ $L$-functions due to the first author \cite{Huang2019} and Lin \cite{Lin}.  Recall that under the same assumptions Lin \cite{Lin} proved that
\[
    L(1/2+it,\pi\times \chi) \ll_{\pi,\varepsilon} \big(M (1+|t|)\big)^{3/4-1/36+\varepsilon}.
\]
One may give a quick comparison with Lin's work \cite{Lin}.
Actually, we have a different structure from Lin's paper.
As we said above, Theorem \ref{thm:subconv3} can be viewed as a subconvexity result for
$\GL(3)\times \GL(2)\times \GL(1)$ $L$-functions, where the $\GL(2)$-item is the Eisenstein series.
Lin's work is to consider the $L(\frac{1}{2}+it,\pi\times \chi)$ directly.

Heath-Brown \cite{Heath-Brown} proved the first hybrid subconvexity bounds for Dirichlet $L$-functions by extending the Burgess method and van der Corput method to give good estimates for hybrid sums $\sum \chi(n) n^{it}$. Recently, Petrow--Young \cite{PetrowYoung,PetrowYoung2019} proved the Weyl bound in both aspects by estimating moments of $L$-functions.
For the $\GL(2)$ case, Blomer--Harcos \cite{BlomerHarcos} proved the first hybrid subconvexity bounds in the $M$- and $t$-aspects by using moments of $L$-functions. Recently,
Fan--Sun \cite{FanSun} improved the bounds by  using a delta method.
Our method can also provide hybrid subconvexity bounds in the $\GL(1)$ and $\GL(2)$ settings, but are weaker than the best known results.


Our basic observation is that the subconvexity bounds for $\GL(3)\times \GL(2)\times \GL(1)$ $L$-functions in both $M$-aspect and $t$-aspect were proved by applying the Duke--Friedlander--Iwaniec delta method to separate oscillatory factors.
This suggests to us that in order to prove a hybrid subconvexity bound one may use the same method as the starting point.
This philosophy may allow us to make progress in other hybrid settings (see \cite{Huanguniform}).
However, technically speaking, to estimate those complicated sums is much more difficult. We have to take care of both aspects carefully.
It is worth mentioning that, as in Lin--Sun \cite{LinSun}, we drop the conductor-lowering trick which was used in Munshi \cite{Munshi2015circleIII} for the $t$-aspect, but we still use the conductor-lowering trick for the $M$-aspect as in Munshi \cite{Munshi2015circleIV} and Sharma \cite{Sharma2019}.


\subsection{Sketch of the proof} We give a brief sketch of the proof.
By the approximate functional equation we need to estimate
\[
  \sum_{n\sim N} A(r,n)\lambda_f(n) \chi(n) n^{-it}.
\]
We will apply the Duke--Friedlander--Iwaniec delta method with moduli $q\leq Q$ (see Lemma \ref{lemma:delta}).
For simplicity let us focus on the generic case, \emph{i.e.} $N = M^3 t^3$, $r = 1$ and $q \sim  Q = (LN/MK)^{1/2} $ 
for some parameters $L$ and $K\ll t^{1-\varepsilon}$ which will be chosen later.
After applying the DFI delta method and the conductor-lowering trick for the $M$-aspect by Munshi (see Sharma \cite{Sharma2019}), the main object of study is given by
\begin{multline*}
  \frac{1}{L}\sum_{\ell \sim L} \overline{A(1,\ell)} \int_{x\sim 1}
  \frac{1}{M}  \; \sideset{}{^\star}\sum_{b\bmod M}
  \frac{1}{Q}\sum_{\substack{q\sim Q \\ (q,\ell M)=1}}
  \frac{1}{q} \; \sideset{}{^\star}\sum_{a\bmod q}
  \sum_{n\sim LN} A(1,n)   e\left(\frac{n(aM+bq)}{qM} \right) e\left(\frac{n x}{MqQ}\right)  \\
   \cdot
  \sum_{m\sim N }\lambda_f(m) \chi(m)   e\left(\frac{-m\ell (aM+bq)}{qM}\right) m^{-it}
  e\left(\frac{-m\ell x}{MqQ}\right)\mathrm{d}x.
\end{multline*}
Trivially estimating at this stage gives $O(LN^2)$. So we want to save $LN$ plus a `little more' in the above sum. Note that here we don't need the conductor-lowing trick for the $t$-aspect as observed in \cite{Aggarwal,Huang,LinSun}. In fact, the $x$-integral above plays the same role as the $v$-integral in Munshi \cite{Munshi2015circleIII}.

We apply the Voronoi summation formulas to both $n$ and $m$ sums.
For the $n$ sum, by the $\GL(3)$ Voronoi, we get essentially
\[
  qM \sum_{n_2=1}^\infty \frac{A(1,n_2)}{n_2} S(\overline{(aM+bq)},n_2;qM) \Psi_x\left(\frac{n_2}{q^3M^3}\right),
\]
for certain weight function $\Psi_x$ depending on $x$.
The conductor is $K^3 M^3 Q^3$, and hence the dual length becomes $n_2\asymp \frac{K^3 M^3 Q^3}{LN}= \frac{L^2N^2}{Q^3}$.  By Lemma \ref{Huang lem 4.1}, the trivial bound for this dual sum is
$QM\cdot (QM)^{1/2}\cdot (\frac{LN}{MQ^2})^{3/2}$. So
we save $(LN)^{1/4}/(MK)^{3/4}$.
In the $\GL(2)$ Voronoi, the dual sum becomes essentially
\[
  \frac{N}{Mq\tau(\bar{\chi})}\sum_{\substack{u \bmod M \\u\not\equiv b \bmod M}}\bar{\chi}(u\ell) \sum_{m\geq 1}
  \lambda_f(m)e\left(\pm\frac{m\overline{\ell(aM+(b-u)q)}}{Mq}\right)
  H^{\pm}\left(\frac{mN}{M^2q^2}\right)
\]
for certain weight function $H^\pm$.
The conductor is $t^2 Q^2 M^2$, so  the dual length becomes $m\asymp \frac{t^2 Q^2 M^2}{N}= LM t^2 /K$.
By Lemma \ref{lem:VSF2} and the square root cancellation in the $u$ sum, the trivial bound
for this dual sum is $\frac{N}{Q M}\cdot \frac{M^{1/2}Q^{1/2}}{N^{1/4}} \cdot (\frac{t^2 Q^2 M^2}{N})^{3/4}\cdot \frac{1}{t^{1/2}}$.
Hence we save
$ N^{1/2}K^{1/2}/(L^{1/2}M^{1/2} t )$.
By the stationary phase method, we save $K^{1/2}$ from the $x$-integral.
We also save $Q^{1/2}$ in the $a$ sum and $M^{1/2}$ in the $b$ sum.
Hence in total we have saved
\[
  \frac{(LN)^{1/4}}{(MK)^{3/4}} \cdot \frac{N^{1/2}K^{1/2}}{L^{1/2}M^{1/2} t} \cdot K^{1/2} Q^{1/2} M^{1/2}
  = \frac{N}{Mt}.
\]
Generally we arrive at
\[
  \frac{N^{13/12}}{M^2 LQ} \sum_{\ell\in \mathcal{L}} \overline{A(1,\ell)\chi(\ell)} \ell^{1/3}
  \sum_{q\sim Q} \frac{1}{q^{3/2}} \sum_{n_2 \asymp \frac{L^2N^2}{Q^3}}
  \frac{A(1,n_2)}{n_2^{2/3}} \sum_{m\asymp \frac{M^2Q^2t^2}{N}} \frac{\lambda_f(m)}{m^{1/4}} \mathcal{C} \mathcal{J},
\]
for certain character sum $\mathcal{C}$ and integral transform $\mathcal{J}$ (see \eqref{eqn: re goal of case a}).

Next applying the Cauchy inequality we arrive at
\[
  \bigg( \sum_{n_2 \asymp \frac{L^2N^2}{Q^3}}
  \bigg|
  \sum_{\ell\in \mathcal{L}} \overline{A(1,\ell)\chi(\ell)} \ell^{1/3}
  \sum_{q\sim Q} \frac{1}{q^{3/2}}  \sum_{m\asymp \frac{M^2Q^2t^2}{N}} \frac{\lambda_f(m)}{m^{1/4}} \mathcal{C} \mathcal{J} \bigg|^2 \bigg)^{1/2},
\]
where we seek to save $L M t$ plus extra.
Opening the absolute value square we apply the Poisson summation formula on the sum over $n_2$. For the zero frequency we save $(LQ\frac{M^2Q^2 t^2}{N})^{1/2}$.
This gives a bound of size $\frac{N^{3/4}M^{3/4}K^{3/4}}{L^{1/4}}$.
We save enough in the zero frequency  if
$K<t$ and $L > 1$.

For the non-zero frequencies, the conductor is of size $Q^2 M K$, hence the length of the dual sum is $O((\frac{Q^2 M K}{L^2N^2/Q^3})^{1/2}) =O(\frac{L^{1/4}N^{1/4}}{M^{3/4}K^{3/4}})$.
In the integral transform we save $K^{1/4}$ and the character sums save $(Q^2M^{1/2})^{1/2}=QM^{1/4}$. Hence in total in the non-zero frequencies we save
$\frac{M^{3/4}K^{3/4}}{L^{1/4}N^{1/4}} K^{1/4} Q M^{1/4}$. This gives a bound of size $N^{1/4}Q L^{1/4} Mt = N^{3/4} L^{3/4}M^{1/2} \frac{t}{K^{1/2}}$. We save enough in the non-zero frequencies if $L<M^{1/3}$ and $K>t^{1/2}$. We also have different bounds from other cases. In fact, the best choice is $L=M^{1/4}$ and $K=t^{4/5}$ which gives $O(N^{1/2+\varepsilon} M^{3/2-1/16}t^{3/2-3/20})$ as claimed.

\subsection{Plan for this paper}
The rest of this paper is organized as follows.
In \S \ref{sec:preliminaries}, we introduce some notation and present some lemmas that we will need later.
The approximate functional equation allows us to reduce the subconvexity problem to estimating certain convolution sums.
In \S \ref{sec:reduction}, we apply the delta method to the convolution sums.
In \S \ref{sec:voronoi}, we apply the Voronoi summation formulas and estimate the integral transforms by the stationary phase method.
In \S \ref{sec:cauchy+poisson}, we apply the Cauchy--Schwarz inequality and Poisson summation formula, and then analyse the integrals.
Then we deal with character sums and the zero frequency contribution in \S \ref{sec:zero-freq}.
In \S \ref{sec:non-zero-freq}, we give the contribution from non zero frequencies.
Finally, in \S \ref{section:proof of Proposition 3.1}, we balance parameters optimally and prove
Proposition \ref{reduction prop} which leads to Theorem \ref{thm:subconv3*2}.

\medskip
\textbf{Notation.}
Throughout the paper, $\varepsilon$ is an arbitrarily small positive number;
all of them may be different at each occurrence.
By a smooth dyadic subdivision of a sum $\sum_{n\geq 1}A(n)$,
we will mean
$$
\sum_{(V,N)}\sum_{n\geq 1}A(n)V\left(\frac{n}{N}\right),
$$
where
$$
\sum_{(V,N)}V\left(\frac{n}{N}\right)=1
$$
with $V$ being a smooth function supported on $[1,2]$
and satisfying $V^{(j)}(x)\ll_j1$.
The weight functions $U,\ V,\ W$ may also change at each occurrence.
As usual, $e(x)=e^{2\pi i x}$.


\section{Preliminaries}\label{sec:preliminaries}

\subsection{Automorphic forms}

Let  $f$ be a Hecke--Maass cusp form with the spectral parameter $t_f$ for $\SL(2,\mathbb{Z})$, with the normalized Fourier coefficients $\lambda_f(n)$.
Let $\theta_2$ be the bound toward to the Ramanujan conjecture and we have $\theta_2\leq 7/64$ due to Kim--Sarnak \cite{Kim2003}.
It is well known that, by the Rankin--Selberg theory, one has
\begin{equation}\label{eqn:RS2}
  \sum_{n\leq N} |\lambda_f(n)|^2 \ll_f N.
\end{equation}

Let $\pi$ be a Hecke--Maass cusp form of type $(\nu_1,\nu_2)$ for $\SL(3,\mathbb{Z})$
with the normalized Fourier coefficients $A(r,n)$.
Similarly, Rankin--Selberg theory gives
\begin{equation}\label{eqn:RS3}
  \sum_{r^2n\leq N} |A(r,n)|^2 \ll_\pi   N.
\end{equation}

We record the Hecke relation
\[
  A(r,n) = \sum_{d\mid (r,n)} \mu(d) A\left(\frac{r}{d},1\right) A\left(1,\frac{n}{d}\right)
\]
which follows from M\"obius inversion and \cite[Theorem 6.4.11]{goldfeld2006automorphic}.
Hence we have the individual bounds
\begin{equation}\label{eqn:Bound A(r,n)}
  A(r,n) \ll (rn)^{\theta_3+\varepsilon},
\end{equation}
where $\theta_3\leq 5/14$ is the bound toward to the Ramanujan conjecture on $\GL(3)$ (see \cite{Kim2003}).
So we have
\begin{equation}\label{eqn:RS3-1}
  \sum_{n\sim N} |A(r,n)| \ll \sum_{n_1\mid r^\infty} \sum_{\substack{n\sim N/n_1 \\ (n,r)=1}} |A(r,nn_1)| \leq \sum_{n_1\mid r^\infty}|A(r,n_1)| \sum_{\substack{n\sim N/n_1 \\ (n,r)=1}} |A(1,n)| \ll r^{\theta_3+\varepsilon} N
\end{equation}
and
\begin{equation}\label{eqn:RS3-2}
  \sum_{n\sim N} |A(r,n)|^2
  \ll \sum_{n_1\mid r^\infty} \sum_{\substack{n\sim N/n_1 \\ (n,r)=1}} |A(r,nn_1)|^2
  \leq \sum_{n_1\mid r^\infty}|A(r,n_1)|^2 \sum_{\substack{n\sim N/n_1 \\ (n,r)=1}} |A(1,n)|^2
  \ll r^{2\theta_3+\varepsilon} N.
\end{equation}
Here we have used \eqref{eqn:RS3} and the fact
$
  \sum_{d\mid r^\infty} d^{-\sigma} \ll r^\varepsilon, \; \textrm{for $\sigma>0$}.
$

\subsection{$L$-functions}\label{subsec:L-functions}

The Rankin--Selberg $L$-function $L(s,\pi\times f \times \chi)$ has the following functional equation
\[
  \Lambda(s,\pi\times f \times \chi) = \epsilon_{\pi\times f\times \chi} \Lambda(1-s,\tilde\pi\times f \times \bar\chi),
\]
where
\[
  \Lambda(s,\pi\times f \times \chi) = M^{3s} \pi^{-3s} \prod_{j=1}^{3} \prod_{\pm}
  \Gamma\left(\frac{s-\alpha_j\pm i t_f}{2}\right) L(s,\pi\times f \times \chi)
\]
is the completed $L$-function and $\epsilon_{\pi\times f\times \chi}$ is the root number.
Here $\alpha_j$ are the Langlands parameters of $\pi$, and $\tilde\pi$ is the contragredient representation of $\pi$.
By \cite[\S5.2]{IwaniecKowalski2004analytic},
we can obtain the approximate functional equation which leads us to the following result.

\begin{lemma}\label{lemma:AFE}
  We have
  \[
    L(1/2+it,\pi\times f \times \chi) \ll (M(|t|+1))^\varepsilon \sup_{N \ll (M(|t|+1))^{3+\varepsilon}} \frac{|S(N)|}{\sqrt{N}} + (M(|t|+1))^{-A},
  \]
  where
  \[
    S(N) = \sum_{r\geq1} \sum_{n\geq1} A(r,n)\lambda_f(n) \chi(r^2 n) (r^2n)^{-it} V\left(\frac{r^2n}{N}\right),
  \]
  with some compactly supported smooth function $V$ such that $\supp V\subset [1,2]$ and  $V^{(j)}\ll_j 1$.
\end{lemma}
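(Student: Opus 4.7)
The plan is to invoke the standard approximate functional equation machinery of \cite[\S5.2]{IwaniecKowalski2004analytic}, which is tailored precisely to this situation. A direct computation from the stated completed $L$-function gives the analytic conductor $\mathcal{C}\asymp M^6(1+|t|)^6$, so $\sqrt{\mathcal{C}}\asymp M^3(1+|t|)^3$, which is the expected effective length.

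Concretely, I would begin with the contour integral
\[
\frac{1}{2\pi i}\int_{(3)}\Lambda(1/2+it+u,\pi\times f\times\chi)\,G(u)\,\frac{du}{u},
\]
where $G(u)$ is an even entire test function satisfying $G(0)=1$ and decaying rapidly on vertical lines (for example $G(u)=e^{u^2}$). Shifting the contour to $\Re u=-3$ picks up the residue $\Lambda(1/2+it,\pi\times f\times\chi)$ at $u=0$, while the functional equation converts the shifted integral into a similar integral for $\tilde\pi\times f\times\bar\chi$. Dividing out the gamma factors at $s=1/2+it$ and using Stirling's formula to estimate the gamma-ratios along $\Re u=-3$ produces the truncated identity
\[
L(1/2+it,\pi\times f\times\chi)=\sum_{r,n\geq 1}\frac{A(r,n)\lambda_f(n)\chi(r^2n)(r^2n)^{-it}}{(r^2n)^{1/2}}\,U_1\!\left(\frac{r^2n}{X}\right)+(\text{dual sum}),
\]
where $X\asymp M^3(1+|t|)^3$ and the smooth cutoffs $U_1,U_2$ are essentially $1$ for $x\leq 1$ and decay faster than any fixed polynomial for $x\geq (M(1+|t|))^\varepsilon$. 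The unfolding of the inner sum into an $(r,n)$-double sum uses directly the Dirichlet-series definition of $L(s,\pi\times f\times\chi)$ given in Section~\ref{sec:Intr}.

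To reach the form claimed in the lemma, I would then apply a smooth dyadic partition of unity to $U_1$, decomposing the first sum into $O(\log M(1+|t|))$ pieces of shape $S(N)/\sqrt{N}$ with $V$ supported in $[1,2]$ and $V^{(j)}\ll_j 1$, each at a dyadic level $N\ll (M(1+|t|))^{3+\varepsilon}$; dyadic pieces beyond this range contribute at most $(M(1+|t|))^{-A}$ by the super-polynomial decay of $U_1$. The dual sum is treated identically: since $\overline{A(r,n)}=A(n,r)$ is the $(r,n)$-coefficient of $\tilde\pi$ and $\lambda_f(n)\in\mathbb{R}$, the dual sum is an $S(N)$-type sum attached to the data $(\tilde\pi,f,\bar\chi,-t)$ and thus obeys the same bounds as $S(N)$. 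There is no genuine obstacle here; the only mildly technical step is the Stirling analysis verifying that $U_1,U_2$ have effective length $\asymp M^3(1+|t|)^3$, after which the reduction is routine bookkeeping.
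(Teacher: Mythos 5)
Your proposal is correct and is exactly the standard IK \S5.2 argument that the paper itself invokes without spelling it out: the contour shift with a rapidly decaying even test function, the Stirling analysis giving effective length $\sqrt{\mathcal{C}}\asymp M^3(1+|t|)^3$, and the dyadic decomposition into $S(N)$-type pieces. The treatment of the dual sum via $(\tilde\pi,f,\bar\chi,-t)$ is also the usual observation, so there is no substantive difference between your route and the paper's.
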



We first estimate the contribution from large values of $r$. By \eqref{eqn:RS2} and \eqref{eqn:RS3-2} we have
\begin{align}\label{eqn:r-large}
  \sum_{r\geq M^{1/8} (|t|+1)^{3/10}} & \left|\sum_{n\geq1} A(r,n)\lambda_f(n) \chi(n) (r^2n)^{-it} V\left(\frac{r^2n}{N}\right)\right| \nonumber \\
  & \ll \sum_{ M^{1/8} (|t|+1)^{3/10} \leq r \ll \sqrt{N}} \left(\sum_{n\asymp N/r^2} |A(r,n)|^2 \right)^{1/2} \left(\sum_{n\asymp N/r^2}  |\lambda_f(n)|^2\right)^{1/2}
  \nonumber \\
  & \ll \sum_{ M^{1/8} (|t|+1)^{3/10} \leq r \ll \sqrt{N}} r^{\theta_3+\varepsilon} \frac{N}{r^2}
  \ll N \sum_{ M^{1/8} (|t|+1)^{3/10} \leq r \ll \sqrt{N}} r^{-3/2-\varepsilon} \nonumber \\
  &
  \ll N^{1/2} M^{3/2-1/16} (|t|+1)^{3/2-3/20+\varepsilon},
\end{align}
for $N \ll (M(|t|+1))^{3+\varepsilon}$.
The contribution from those terms to $L(1/2+it,\pi\times f \times \chi)$ is bounded by $M^{3/2-1/16+\varepsilon} (|t|+1)^{3/2-3/20+\varepsilon}$.

Therefore, combining this together with Lemma \ref{lemma:AFE}, we prove the following lemma.

\begin{lemma}\label{lemma:L<<SrN}
We have
\begin{equation*}
    L(1/2+it,\pi\times f \times \chi) \ll t^\varepsilon
    \sum_{\substack{r\leq M^{1/8} t^{3/10} \\ (r,M)=1}} \frac{1}{r}  \sup_{N \ll (Mt)^{3+\varepsilon}/r^2} \frac{|S(r,N)|}{\sqrt{N}}
    + M^{3/2-1/16} t^{3/2-3/20+\varepsilon},
\end{equation*}
where
\[
  S(r,N) := \sum_{n\geq1} A(r,n)\lambda_f(n) \chi(n) n^{-it} V\left(\frac{n}{N}\right).
\]
\end{lemma}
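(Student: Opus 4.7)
The plan is to combine Lemma \ref{lemma:AFE} with the tail bound \eqref{eqn:r-large} after separating the $r$-variable in the double sum $S(N)$.

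First I would invoke Lemma \ref{lemma:AFE} to reduce the estimation of $L(1/2+it,\pi\times f\times\chi)$ to the control of $|S(N)|/\sqrt{N}$ for $N\ll (M(|t|+1))^{3+\varepsilon}$. Next I would isolate the $r$-variable in $S(N)$. Since $\chi$ is primitive modulo the prime $M$, the factor $\chi(r^2n)$ vanishes unless $(r,M)=1$; for such $r$ one has $\chi(r^2n)=\chi(r)^2\chi(n)$ and $(r^2n)^{-it}=r^{-2it}n^{-it}$, both of which are unimodular. Setting $N_r:=N/r^2$, this yields the clean decomposition
\[
S(N)=\sum_{\substack{r\geq 1\\(r,M)=1}}\chi(r)^2\,r^{-2it}\,S(r,N_r),
\]
where $S(r,N_r)$ is exactly the inner sum appearing in the statement, with weight $V(n/N_r)$.

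Now I would split the $r$-sum at the threshold $r_0:=M^{1/8}(|t|+1)^{3/10}$. For the tail $r\geq r_0$, the displayed chain of inequalities in \eqref{eqn:r-large} (Cauchy--Schwarz, followed by the Rankin--Selberg bounds \eqref{eqn:RS3-2} and \eqref{eqn:RS2}, together with the Kim--Sarnak bound \eqref{eqn:Bound A(r,n)}) gives a contribution to $S(N)$ of size $\ll N^{1/2}M^{3/2-1/16}(|t|+1)^{3/2-3/20+\varepsilon}$; dividing by $\sqrt{N}$ produces the error term $M^{3/2-1/16}(|t|+1)^{3/2-3/20+\varepsilon}$ advertised in the lemma.

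For the complementary short range $r\leq r_0$ with $(r,M)=1$, I would apply the triangle inequality and pull out the unimodular factors:
\[
\frac{|S(N)|_{r\leq r_0}}{\sqrt{N}}\ll\sum_{\substack{r\leq r_0\\(r,M)=1}}\frac{|S(r,N_r)|}{r\sqrt{N_r}}\ll\sum_{\substack{r\leq r_0\\(r,M)=1}}\frac{1}{r}\sup_{N'\ll (Mt)^{3+\varepsilon}/r^2}\frac{|S(r,N')|}{\sqrt{N'}},
\]
where the allowed range for $N'=N/r^2$ follows from the original constraint $N\ll (M(|t|+1))^{3+\varepsilon}$. Absorbing the negligible contribution $(M(|t|+1))^{-A}$ and writing $|t|+1\asymp t$ in the regime $t\geq M^\varepsilon$ (as noted after Theorem \ref{thm:subconv3*2}) delivers the claim. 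This argument is essentially bookkeeping: there is no genuine obstacle, as Lemma \ref{lemma:AFE} and the tail bound \eqref{eqn:r-large} have already done the analytic work; the only point requiring care is the rescaling $N\mapsto N/r^2$ so that the supremum in the statement ranges over $N\ll (Mt)^{3+\varepsilon}/r^2$.
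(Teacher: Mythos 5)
Your proposal is correct and follows the same path as the paper: invoke Lemma \ref{lemma:AFE}, factor out the unimodular quantities $\chi(r)^2 r^{-2it}$ for $(r,M)=1$ so that $S(N)=\sum_{(r,M)=1}\chi(r)^2 r^{-2it}S(r,N/r^2)$, split the $r$-sum at $r_0=M^{1/8}(|t|+1)^{3/10}$, estimate the tail via the chain in \eqref{eqn:r-large}, and rescale $N\mapsto N/r^2$ in the surviving range. You have simply made explicit the bookkeeping that the paper leaves implicit between \eqref{eqn:r-large} and the statement of Lemma \ref{lemma:L<<SrN}.
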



\subsection{Summation formulas}

We first recall
the Poisson summation formula over an arithmetic progression.
\begin{lemma}\label{lem:Poisson}
  Let $\beta\in\mathbb{Z}$ and $c\in \mathbb{Z}_{\geq1}$. For a Schwartz function $f:\mathbb{R}\rightarrow \mathbb{C}$, we have
  \[
    \sum_{\substack{n\in\mathbb{Z}\\ n\equiv \beta \bmod{c}}} f(n) = \frac{1}{c} \sum_{n\in\mathbb{Z}} \hat{f}\left(\frac{n}{c}\right) e\left(\frac{n\beta}{c}\right),
  \]
  where $\hat{f}(y)=\int_{\mathbb{R}} f(x) e(-xy)\dd x$ is the Fourier transform of $f$.
\end{lemma}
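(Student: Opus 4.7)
The plan is to reduce the claim to the classical Poisson summation formula $\sum_{m\in \mathbb{Z}} g(m) = \sum_{k\in\mathbb{Z}} \hat{g}(k)$, valid for any Schwartz function $g:\mathbb{R}\to\mathbb{C}$. First I would parametrize the arithmetic progression $\{n \in \mathbb{Z} : n\equiv \beta \bmod c\}$ by writing $n = \beta + c m$ with $m$ running over $\mathbb{Z}$; this turns the left-hand side of the stated identity into $\sum_{m\in\mathbb{Z}} g(m)$, where $g(x) := f(\beta + c x)$. Since $f$ is Schwartz and the map $x\mapsto \beta + c x$ is an affine bijection of $\mathbb{R}$ with $c\geq 1$, the function $g$ inherits Schwartzness, so classical Poisson applies to $g$ and both resulting sums converge absolutely.

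Next I would relate $\hat{g}$ to $\hat{f}$ by the substitution $u = \beta + c x$, which yields
\begin{align*}
  \hat{g}(k)
  &= \int_{\mathbb{R}} f(\beta + c x)\, e(-k x)\, \dd x \\
  &= \frac{1}{c}\, e\!\left(\frac{k\beta}{c}\right) \int_{\mathbb{R}} f(u)\, e\!\left(-\frac{k u}{c}\right)\, \dd u \\
  &= \frac{1}{c}\, e\!\left(\frac{k\beta}{c}\right) \hat{f}\!\left(\frac{k}{c}\right).
\end{align*}
Substituting this expression into $\sum_{m\in\mathbb{Z}} g(m) = \sum_{k\in\mathbb{Z}} \hat{g}(k)$ and renaming the dummy variable $k$ as $n$ produces exactly the claimed identity.

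There is essentially no genuine obstacle here: the argument is a one-line change of variables layered on top of the classical Poisson summation formula. The only points requiring care are tracking the factor $1/c$ and the phase $e(k\beta/c)$ arising from the substitution, and the (immediate) observation that classical Poisson is applicable because $g$ is Schwartz, so both sides converge absolutely and the resulting identity holds pointwise in $\beta$.
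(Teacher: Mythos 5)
Your proof is correct and is the standard derivation: substitute $n = \beta + cm$, apply classical Poisson to $g(x) = f(\beta + cx)$, and compute $\hat g$ by a change of variables. The paper does not spell out a proof at all — it simply cites Iwaniec--Kowalski, Eq.~(4.24) — so your argument is exactly the expected justification behind that reference, and there is nothing to compare beyond noting that you have supplied the details the paper omits.
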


\begin{proof}
  See e.g. \cite[Eq. (4.24)]{IwaniecKowalski2004analytic}.
\end{proof}

We recall the Voronoi summation formula for $\SL(2,\mathbb{Z})$.
Let $g$ be a smooth compactly supported function on $(0,\infty)$.
\begin{lemma}\label{lem:VSF2}
  With the notation as above.  Then we have
  \begin{equation}\label{eqn:VSF2}
    \sum_{n\geq1} \lambda_f(n) e\left(\frac{an}{q}\right) g\left(\frac{n}{N}\right)
    = \frac{N}{q} \sum_{\pm} \sum_{n\geq1} \lambda_f(n) e\left(\mp\frac{\bar{a} n}{q}\right) H^\pm \left(\frac{nN}{q^2}\right)
  \end{equation}
  where
  \begin{equation}\label{eqn:G+}
    H^+ (y) =   \frac{-\pi}{\sin(\pi i t_f)} \int_{0}^{\infty} g(\xi) (J_{2it_f}(4\pi\sqrt{y\xi})-J_{-2it_f}(4\pi\sqrt{y\xi})) \dd \xi,
  \end{equation}
  and
  \begin{equation}\label{eqn:G-}
    H^- (y) =  4 \epsilon_f \cosh(\pi t_f) \int_{0}^{\infty} g(\xi) K_{2it_f}(4\pi\sqrt{y\xi}) \dd \xi.
  \end{equation}
  For $y\gg T^\varepsilon$, we have
  \begin{align}\label{J estimates}
    H^+ (y) = y^{-1/4} \int_{0}^{\infty} g(\xi) \xi^{-1/4} \sum_{j=0}^{J} \frac{c_j e(2\sqrt{y\xi})+ d_j e(-2\sqrt{y\xi})}{(y\xi)^{j/2}} \dd \xi + O(T^{-A})
  \end{align}
  for some constant $J=J(A)$
  and
  \begin{align}\label{K estimates}
    H^- (y) \ll_{t_f,A} y^{-A}.
  \end{align}
\end{lemma}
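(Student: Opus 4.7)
The plan is to derive the identity \eqref{eqn:VSF2} from the functional equation of the additively twisted $L$-series $L(s,f,a/q):=\sum_{n\geq 1}\lambda_f(n)e(an/q)n^{-s}$, and then to extract the asymptotic behavior of the kernels $H^{\pm}$ from classical Bessel function expansions.

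First, I would apply Mellin inversion to the test function: writing $g(n/N)=\frac{1}{2\pi i}\int_{(\sigma)}\tilde g(s)(n/N)^{-s}\,\dd s$ with $\sigma$ large and interchanging sum and integral turns the left-hand side of \eqref{eqn:VSF2} into a Mellin--Barnes integral involving $L(s,f,a/q)$ times $N^{s}\tilde g(s)$. The twisted series satisfies a functional equation relating $L(s,f,a/q)$ to $L(1-s,f,-\bar a/q)$ with gamma ratios of the shape $\prod_{\pm}\Gamma((1-s\pm it_f)/2)/\Gamma((s\pm it_f)/2)$ and a modulus factor $(q/\pi)^{2s-1}$; this is obtained by expanding $e(an/q)$ as a linear combination of primitive Dirichlet characters modulo $q$ via Gauss sums and invoking the standard functional equations of $L(s,f\otimes\chi)$, the two signs in $H^{\pm}$ arising from the two distinct gamma-factor combinations produced by the even- and odd-character contributions (equivalently, the two parities of the Maass form). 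Shifting the contour from $\Re(s)=\sigma$ to $\Re(s)=1-\sigma$ (no poles are crossed because $f$ is cuspidal) and unfolding the dual series yields the sum on the right of \eqref{eqn:VSF2}, with kernels given by Mellin--Barnes integrals of the above gamma ratios against $(nN/q^{2})^{-s}\tilde g(1-s)$. Matching these against the standard Mellin--Barnes representations of $J_{\pm 2it_f}$ and $K_{2it_f}$ then reproduces the explicit formulas \eqref{eqn:G+} and \eqref{eqn:G-}.

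For the asymptotic \eqref{J estimates}, I would substitute into \eqref{eqn:G+} the Hankel-type expansion
\[
  J_\nu(x)=x^{-1/2}\sum_{j=0}^{J-1}\bigl(c_j' e^{ix}+d_j' e^{-ix}\bigr)x^{-j}+O_\nu\!\left(x^{-J-1/2}\right),
\]
valid for $x\gg 1+|\nu|^{2}$, at $x=4\pi\sqrt{y\xi}$. Since $\xi\asymp 1$ on $\supp g$ and $y\gg T^{\varepsilon}$, the argument is large, the factor $e^{ix}$ produces the displayed phase $e(2\sqrt{y\xi})$, the prefactor $x^{-1/2}$ supplies the outer $(y\xi)^{-1/4}$, and choosing $J=J(A)$ sufficiently large absorbs the tail into $O(T^{-A})$ after a trivial $\xi$-integration. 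For \eqref{K estimates} one substitutes the exponential decay $K_{2it_f}(x)\ll_{t_f} e^{-x}$ (valid for $x\gg 1+|t_f|$) into \eqref{eqn:G-}; this bounds $H^{-}(y)$ by $e^{-c\sqrt{y}}$ on $\supp g$, which is $\ll_{A} y^{-A}$ for $y\gg T^{\varepsilon}$.

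The main technical point will be justifying the contour shift: the gamma quotients grow exponentially in vertical strips, but the Schwartz-class decay of $\tilde g(1-s)$ (coming from the smoothness and compact support of $g$) dominates by Stirling, and the bounds are uniform enough to absorb the $L$-factor on the shifted line and close the contour at infinity. A secondary subtlety is the explicit bookkeeping of the constants $c_j, d_j$ in \eqref{J estimates}, which requires treating the Hankel expansions of $J_{2it_f}$ and $J_{-2it_f}$ separately and combining them through the prefactor $-\pi/\sin(\pi i t_f)$ before simplification.
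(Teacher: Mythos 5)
The paper offers no proof of this lemma, only the citation ``See e.g.\ \S3.1 of Lin--Sun'', which in turn rests on standard derivations of the $\GL(2)$ Voronoi formula and classical Bessel asymptotics; so there is no internal argument to compare against. Your Mellin/functional-equation route is one of the two standard approaches (the other being the automorphic proof via cusp rotation), and your treatment of the asymptotics \eqref{J estimates} and \eqref{K estimates} --- substituting the Hankel expansion of $J_{\pm 2it_f}$ at large argument and the exponential decay of $K_{2it_f}$, noting that $\xi\asymp 1$ on $\supp g$ so the argument $4\pi\sqrt{y\xi}$ is large once $y\gg T^{\varepsilon}$ --- is sound.

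However, there is a conceptual error in your account of the $\pm$ split. You attribute the two kernels $H^{\pm}$ to ``even- and odd-character contributions (equivalently, the two parities of the Maass form)''. This is not correct: $f$ has a single fixed parity, and the split into the $J$-Bessel kernel $H^{+}$ and the $K$-Bessel kernel $H^{-}$ is not indexed by any character parity. In the automorphic proof the two pieces come from the positive and negative Fourier modes of $f$ after rotating to the cusp $a/q$; in the Mellin proof they arise when one decomposes the resulting Mellin--Barnes gamma quotient into the distinct integral representations of $J_{\pm 2it_f}$ and $K_{2it_f}$ --- which is, in fact, exactly what the final sentence of your paragraph describes, so the two explanations you give are mutually inconsistent. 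Relatedly, the decomposition $e(an/q)=\frac{1}{\varphi(q)}\sum_{\chi\bmod q}\bar\chi(a)\tau(\chi)\chi(n)$ holds only when $(n,q)=1$ and involves imprimitive characters whose twists lack clean functional equations; for level $1$ the cleaner route is to use $\left(\begin{smallmatrix} a & b \\ q & d\end{smallmatrix}\right)\in\SL(2,\mathbb{Z})$ directly to obtain the functional equation of the additively twisted completed $L$-series, bypassing both the $\gcd(n,q)>1$ terms and the primitivity bookkeeping that your sketch elides.
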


\begin{proof}
  See e.g. \cite[\S3.1]{LinSun}.
\end{proof}
Notice that \eqref{J estimates} and \eqref{K estimates} are only valid for $y\gg T^\varepsilon$.
So we also need the facts which state that, for $y>0$, $k\geq 0$ and $\Re\nu=0$, one has (see \cite[Lemma C.2]{KMV2002})
\begin{align}\label{Bessel for small y}
  \begin{split}
     &y^kJ_{\nu}^{(k)}(y)\ll_{k,\nu}\frac{1}{(1+y)^{1/2}},
      \\
     &y^kK_{\nu}^{(k)}(y)\ll_{k,\nu}\frac{e^{-y}(1+|\log y|)}{(1+y)^{1/2}}.
  \end{split}
\end{align}

We now recall the Voronoi summation formula for $\SL(3,\mathbb{Z})$.
Let $\psi$ be a smooth compactly supported function on $(0,\infty)$,
and let $\tilde{\psi}(s):=\int_{0}^{\infty}\psi(x)x^s\frac{\dd x}{x}$
be its Mellin transform.
For $\sigma>5/14$, we define
\begin{equation}\label{eqn:Psi}
    \Psi^{\pm}(z) := z \frac{1}{2\pi i} \int_{(\sigma)} (\pi^3z)^{-s} \gamma_3^\pm(s) \tilde{\psi}(1-s)\dd s ,
\end{equation}
with
\begin{equation}\label{eqn:gamma^pm}
  \gamma_3^\pm(s) := \prod_{j=1}^{3}
    \frac{\Gamma\left(\frac{s+\alpha_j}{2}\right)} {\Gamma\left(\frac{1-s-\alpha_j}{2}\right)}
    \pm \frac{1}{i} \prod_{j=1}^{3}
    \frac{\Gamma\left(\frac{1+s+\alpha_j}{2}\right)} {\Gamma\left(\frac{2-s-\alpha_j}{2}\right)},
\end{equation}
where $\alpha_j$ are the Langlands parameters of $\pi$ as above.
Note that changing $\psi(y)$ to $\psi(y/N)$ for a positive real number $N$ has the effect of
changing $\Psi^\pm(z)$ to $\Psi^\pm(zN)$.
The Voronoi formula on $\GL(3)$ was first proved by Miller--Schmid~\cite{MillerSchmid2006automorphic}.
The present version is due to Goldfeld--Li~\cite{goldfeld2006voronoi} with slightly renormalized variables (see Blomer \cite[Lemma 3]{blomer2012subconvexity}).
\begin{lemma}\label{lemma:VSF3}
  Let $c,d,\bar{d}\in\mathbb Z$ with $c\neq0$, $(c,d)=1$, and $d\bar{d}\equiv1\pmod{c}$.
  Then we have
  \begin{equation*}
    \begin{split}
         \sum_{n=1}^{\infty} A(m,n)e\left(\frac{n\bar{d}}{c}\right)\psi(n)
         = \frac{c\pi^{3/2}}{2} \sum_{\pm} \sum_{n_1|cm} \sum_{n_2=1}^{\infty}
              \frac{A(n_2,n_1)}{n_1n_2} S\left(md,\pm n_2;\frac{mc}{n_1}\right)
              \Psi^{\pm}\left(\frac{n_1^2n_2}{c^3m}\right),
    \end{split}
  \end{equation*}
  where $S(a,b;c) := \mathop{{\sum}^*}_{d(c)} e\left(\frac{ad+b\bar{d}}{c}\right)$ is the classical Kloosterman sum.
\end{lemma}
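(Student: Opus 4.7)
This is a well-established formula; the plan is to follow the analytic approach of Goldfeld--Li (and its renormalization in Blomer) by exploiting the functional equation of $L(s,\tilde\pi\otimes\chi)$ for Dirichlet characters $\chi$ modulo $c$. The Miller--Schmid proof via automorphic distributions is an alternative, but the $L$-function route produces the explicit gamma factor $\gamma_3^{\pm}(s)$ and the Kloosterman kernel essentially for free, and makes the divisibility condition $n_1\mid cm$ transparent.

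First I would apply Mellin inversion to write $\psi(n)=\frac{1}{2\pi i}\int_{(\sigma)}\tilde\psi(s)n^{-s}\,\dd s$ for $\sigma$ large, and open the additive character $e(n\bar d/c)$ as a linear combination of Dirichlet characters modulo $c$ using Gauss sums. Up to careful bookkeeping at primes dividing $(n,c)$, this reduces the inner sum to a linear combination of the twisted Dirichlet series $\sum_n A(m,n)\chi(n)n^{-s}$. The Hecke multiplicativity $A(m,n)=\sum_{d\mid(m,n)}\mu(d)A(m/d,1)A(1,n/d)$ factors this series as $L(s,\tilde\pi\otimes\chi)$ times finite local Euler factors supported at the primes of $m$; those local factors are exactly what will enforce the divisor condition $n_1\mid cm$ after the functional equation is applied.

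Next I would shift the contour to $\Re(s)=1-\sigma$ past the critical strip. Cuspidality of $\pi$ guarantees that no residues are picked up. On the new line I apply the completed $\GL(3)$ functional equation $\Lambda(s,\tilde\pi\otimes\chi)=\epsilon_{\pi,\chi}\Lambda(1-s,\pi\otimes\bar\chi)$. Repackaging the ratio of archimedean gamma factors, and accounting for the $i^{\pm}$ terms arising when the archimedean factor is written in its $\pm$-decomposed form, yields precisely the kernel $\gamma_3^{\pm}(s)$ of \eqref{eqn:gamma^pm}; the $\pm$ splitting reflects the non--self-duality of $\pi$. The cube of the Gauss sum from the root number combined with the orthogonality sum over $\chi$ then reassembles into the classical Kloosterman sum $S(md,\pm n_2;mc/n_1)$, and the contour integral over $s$ against the dual weight becomes the transform $\Psi^{\pm}(n_1^2n_2/(c^3m))$.

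The main obstacle is the case $(n,c)>1$, where the Dirichlet character decomposition is degenerate: one must split $c$ into its part coprime to $m$ and the part supported on primes dividing $m$, then apply Hecke relations for $A(m,n)$ prime-by-prime against the prime divisors of $c$, and track signs and normalizations through the functional equation. This is precisely where the divisor $n_1\mid cm$ and the Kloosterman modulus $mc/n_1$ emerge, and matching the exact constants in $\gamma_3^{\pm}$ and the sign in $S(md,\pm n_2;\cdot)$ requires a delicate local computation. In practice I would cite the cleanly packaged statement in Blomer \cite[Lemma 3]{blomer2012subconvexity} rather than redo this bookkeeping.
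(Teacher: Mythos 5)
The paper offers no proof of this lemma; it simply cites Miller--Schmid, Goldfeld--Li, and Blomer's renormalized statement, which is exactly what you ultimately do as well, so your proposal matches the paper's approach. Your sketch of the Goldfeld--Li route is plausible background, but note that the step ``open $e(n\bar d/c)$ as Dirichlet characters and let orthogonality reassemble the cube of the Gauss sum into the classical Kloosterman sum'' hides the hardest part of the GL(3) case: one needs functional equations for all, including imprimitive, characters modulo $c$, and the emergence of the two-variable Kloosterman sum $S(md,\pm n_2;mc/n_1)$ together with the extra summation over $n_1\mid cm$ involves substantially more combinatorial work than the GL(2) analogue would suggest --- deferring to the cited references is the right call.
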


%
%
%



\subsection{The delta method}

There are two oscillatory factors contributing to the convolution sums. Our method is based on separating these oscillations using the circle method. In the present situation we will use a version of the delta method of Duke, Friedlander and Iwaniec. More specifically we will use the expansion (20.157) given in  \cite[\S20.5]{IwaniecKowalski2004analytic}. Let $\delta:\mathbb{Z}\rightarrow \{0,1\}$ be defined by
$$
\delta(n)=\begin{cases} 1&\text{if}\;\;n=0;\\
0&\text{otherwise}.\end{cases}
$$
We seek a Fourier expansion which matches with $\delta(n)$.
\begin{lemma}\label{lemma:delta}
  Let $Q$ be a large positive number. Then we have
  \begin{align}\label{eqn:delta-n}
    \delta(n)=\frac{1}{Q}\sum_{1\leq q\leq Q} \;\frac{1}{q}\; \sideset{}{^\star}\sum_{a\bmod{q}}e\left(\frac{na}{q}\right)
    \int_\mathbb{R} g(q,x) e\left(\frac{nx}{qQ}\right)\mathrm{d}x,
  \end{align}
  where 
  $g(q,x)$ is a weight function satisfying that
  \begin{equation}\label{eqn:g-h}
    g(q,x)=1+O\left(\frac{Q}{q}\left(\frac{q}{Q}+|x|\right)^A\right),
    \quad
     g(q,x)\ll |x|^{-A}, \quad \textrm{for any $A>1$},
  \end{equation}
  and
  \begin{equation}\label{eqn:g^(j)}
    \frac{\partial^j}{\partial x^j} g(q,x) \ll  |x|^{-j} \min(|x|^{-1},Q/q) \log Q, \quad j\geq1.
  \end{equation}
  Here the $\star$ on the sum indicates that the sum over $a$ is restricted by the condition $(a,q)=1$.
\end{lemma}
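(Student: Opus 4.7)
The plan is to invoke the Duke--Friedlander--Iwaniec delta method construction, essentially in the form presented in Iwaniec--Kowalski \cite[\S20.5, formula (20.157)]{IwaniecKowalski2004analytic}, and then verify the quantitative properties claimed for the weight $g(q,x)$. The setup is as follows: fix a nonnegative even Schwartz function $w$, supported away from the origin (say on $[1,2]\cup[-2,-1]$) and suitably normalized. The starting observation is that
\[
  \delta(n)=\sum_{q=1}^{\infty}\frac{1}{q}\sum_{a\bmod q}^{\star} e\!\left(\frac{an}{q}\right)\mathbf{1}_{q\mid n},
\]
which diverges as written; the DFI idea is to replace the unsmoothed indicator $\mathbf{1}_{q\mid n}$ by a smoothed version built from $w$ and a conjugate function $w^{\sharp}$, chosen via Poisson summation so that the diagonal term reproduces $\delta(n=0)$ exactly while the off-diagonal contributions vanish.

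First I would set $F(q,y)$ to be a profile function manufactured from $w,w^{\sharp}$ so that
\[
  \delta(n)=\frac{1}{Q}\sum_{q\leq Q}\frac{1}{q}\sum_{a\bmod q}^{\star} e\!\left(\frac{an}{q}\right) F\!\left(q,\frac{n}{qQ}\right),
\]
and then express $F$ as an additive Fourier integral $F(q,y)=\int_{\mathbb{R}} g(q,x) e(xy)\,\mathrm{d}x$. Substituting this representation gives formula \eqref{eqn:delta-n}. The weight $g(q,x)$ is, in essence, the Fourier transform in the second variable of the ratio of $w$ at scale $q/Q$ and its dual; its qualitative shape is completely determined by the choice of $w$.

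Next I would verify the three stated bounds. The approximation $g(q,x)=1+O((Q/q)(q/Q+|x|)^{A})$ in \eqref{eqn:g-h} comes from Taylor-expanding the integrand defining $g$ around $x=0$ and $q\ll Q$, using the smoothness of $w$ together with the normalization that forces $g(q,0)\to 1$ away from $q\asymp Q$; the rapid decay $g(q,x)\ll|x|^{-A}$ is then obtained by iterated integration by parts in the defining integral, which is permissible because $w$ is $C^{\infty}$ with compact support. The derivative estimate \eqref{eqn:g^(j)} is extracted by differentiating under the integral sign, combining the two regimes (small $|x|$ versus large $|x|$) and paying the expected $|x|^{-j}$ from each derivative, with the factor $\min(|x|^{-1},Q/q)$ reflecting whether the trivial bound or the smoothing bound dominates.

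The only real obstacle is bookkeeping: one must carefully track the dependence of all error terms on the parameters $q,Q,x$ and the number of integrations by parts. Since this verification is carried out in detail in \cite[\S20.5]{IwaniecKowalski2004analytic}, my plan is simply to cite that reference for the precise statement of \eqref{eqn:delta-n}--\eqref{eqn:g^(j)} rather than reproduce the analytic bookkeeping.
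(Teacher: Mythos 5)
Your plan — invoke the DFI construction and ultimately cite \cite[\S20.5]{IwaniecKowalski2004analytic} for the precise statement — is essentially what the paper does (the paper's proof is a one-line citation to \cite[Lemma~15]{Huang} and \cite[\S20.5]{IwaniecKowalski2004analytic}), so at the level of proof strategy you match it. Two remarks, though. First, your ``starting observation'' is not correct as written: since $e(an/q)=1$ whenever $q\mid n$, the right-hand side of
\[
\sum_{q\ge1}\frac1q\;\sideset{}{^\star}\sum_{a\bmod q}e\Bigl(\frac{an}{q}\Bigr)\mathbf{1}_{q\mid n}
\]
equals $\sum_{q\mid n}\phi(q)/q$, which is $\ge 1$ for every $n\ne 0$ rather than $0$; and the DFI construction does not proceed by smoothing the indicator $\mathbf{1}_{q\mid n}$ but by a telescoping identity built from a compactly supported $\omega$ with $\sum_q\omega(q)=1$ (the ``divergent'' formal identity one has in mind is $\sum_q c_q(n)/q$, \emph{without} the indicator). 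Since you defer the actual proof to the reference this slip does not sink the argument, but it misstates the mechanism. Second, \cite[\S20.5]{IwaniecKowalski2004analytic} records the bounds on $g$ and on $\partial g/\partial x$ but not the general $j$-th derivative bound in \eqref{eqn:g^(j)} for $j\ge 2$; for that refinement the paper additionally cites \cite[Lemma~15]{Huang}, which you should include (or at least acknowledge the extra differentiation is needed) rather than claiming \eqref{eqn:g^(j)} is already in Iwaniec--Kowalski.
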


\begin{proof}
  See \cite[Lemma 15]{Huang} and \cite[\S20.5]{IwaniecKowalski2004analytic}.
\end{proof}

In applications of \eqref{eqn:delta-n}, we can first restrict to $|x|\ll Q^\varepsilon$. If $q\gg Q^{1-\varepsilon}$, then by \eqref{eqn:g^(j)} we get $ \frac{\partial^j}{\partial x^j} g(q,x) \ll Q^\varepsilon |x|^{-j} $, for any $j\geq1$. If $q\ll Q^{1-\varepsilon}$ and $Q^{-\varepsilon} \ll |x| \ll Q^\varepsilon$, then by \eqref{eqn:g^(j)} we also have $ \frac{\partial^j}{\partial x^j} g(q,x) \ll Q^\varepsilon |x|^{-j} $, for any $j\geq1$. Finally, if $q\ll Q^{1-\varepsilon}$ and $|x| \ll Q^{-\varepsilon}$, then by \eqref{eqn:g-h}, we can replace $g(q,x)$ by 1 with a negligible error term.
So in all cases, we can view $g(q,x)$ as a nice weight function.

We remark that
there is no restrictions on $Q$,
so we can choose $Q$ to be any large positive number.
Recall that in Sharma \cite{Sharma2019} and Lin--Sun \cite{LinSun},
the authors took $Q$ to be $(\frac{NL}{M})^{1/2}$ and $(\frac{N}{t^{4/5}})^{1/2}$,
respectively. This motivates us to choose $Q=(\frac{NL}{MK})^{1/2}$.
As we will see, after balancing finally, we can take
$L=M^{1/4}$ and $K=t^{4/5}$ optimally, which coincides with Sharma \cite{Sharma2019} and Lin--Sun \cite{LinSun}.

\subsection{Oscillatory integrals}

Let $\mathcal{F}$ be an index set and $X=X_T:\mathcal{F}\rightarrow \mathbb{R}_{\geq1}$ be a function of $T\in\mathcal{F}$.
A family of $\{w_T\}_{T\in\mathcal{F}}$ of smooth functions supported on a product of dyadic intervals in $\mathbb{R}_{>0}^d$ is called $X$-inert if for each $j=(j_1,\ldots,j_d) \in \mathbb{Z}_{\geq0}^d$ we have
\[
  \sup_{T\in\mathcal{F}} \sup_{(x_1,\ldots,x_d) \in \mathbb{R}_{>0}^d}
  X_T^{-j_1-\cdots -j_d} \left| x_1^{j_1} \cdots x_d^{j_d} w_T^{(j_1,\ldots,j_d)} (x_1,\ldots,x_d) \right|
   \ll_{j_1,\ldots,j_d} 1.
\]
We will use the following stationary phase lemma several times.

\begin{lemma}\label{lemma:stationary_phase}
  Suppose $w=w_T(t)$ is a family of $X$-inert functions, with compact support on $[Z,2Z]$,
  so that $w^{(j)}(t)\ll (\frac{Z}{X})^{-j}$. Also suppose that $\phi$ is smooth and satisfies
  $\phi^{(j)}\ll \frac{Y}{Z^j}$ for some $\frac{Y}{X^2}\geq R\geq 1$ and all $t$ in the support
  of $w$. Let
  \[
    I=\int_{-\infty}^\infty w(t)e^{i\phi(t)}\textup{d}t.
  \]
  \begin{enumerate}
  \item [(i)] If $|\phi'(t)|\gg \frac{Y}{Z}$ for all $t$ in the support of $w$, then $I\ll_A ZR^{-A}$
  for $A$ arbitrarily large.
  \item [(ii)] If $|\phi''(t)|\gg \frac{Y}{Z^2}$ for all $t$ in the support of $w$, and there exists
  $t_0\in \mathbb{R}$ such that $\phi'(t_0)=0$ (note that $t_0$ is necessarily unique),
  then
  \[
    I
    = \frac{e^{i\phi(t_0)}}{\sqrt{\phi''(t_0)}}F_T(t_0) + O_A(ZR^{-A}),
  \]
  where $F_T$ is a family of $X$-inert functions (depending on $A$) supported on $t_0\asymp Z$.
  \end{enumerate}
\end{lemma}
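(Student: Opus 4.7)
The proof splits along the two cases. For part (i), the plan is to apply repeated integration by parts driven by the oscillatory factor. Introduce the operator $L(g) := -\frac{d}{dt}\bigl(g/(i\phi')\bigr)$ so that $\int w\,e^{i\phi}\,dt = \int L^A(w)\,e^{i\phi}\,dt$ for any $A \geq 0$ (all boundary terms vanish since $w$ has compact support in $[Z,2Z]$). The key bookkeeping is to show that $L^A(w)$ remains morally $X$-inert but picks up a factor of size $(X/Y)^A$. Each time $L$ acts, a derivative of the amplitude costs a factor of $X/Z$ by the $X$-inertness of $w$, dividing by $\phi'$ gains $Z/Y$ by the lower bound $|\phi'| \gg Y/Z$, and every further factor $\phi^{(j)}/\phi'$ that appears through the Leibniz rule is $\ll Z^{1-j}$, which is harmless. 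Integrating the resulting bound over the support of length $Z$ gives $I \ll_A Z\,(X/Y)^A$. The hypothesis $Y/X^2 \geq R$ with $X \geq 1$ implies $X/Y \leq 1/(RX) \leq 1/R$, so $(X/Y)^A \leq R^{-A}$, which yields (i).

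For part (ii), the plan is the standard stationary phase method with explicit parametric control. Uniqueness of $t_0$ in the support follows from $|\phi''| \gg Y/Z^2$. Introduce a smooth localization $\eta\bigl((t-t_0)/\lambda\bigr)$ at the scale $\lambda := Z/\sqrt{R}$ together with its complement. On the complement of the localization, one has $|\phi'(t)| \gg (Y/Z^2)\lambda = \sqrt{Y}\,\sqrt{R}/Z \cdot (Y/Z)^{1/2}$ (so the effective $R$-ratio is at least $R$), and part (i) bounds this piece by $O_A(ZR^{-A})$. Near $t_0$, change variables $s=t-t_0$, Taylor expand $\phi(t_0+s)=\phi(t_0)+\tfrac{1}{2}\phi''(t_0)s^2+\phi_3(s)$ with $\phi_3(s)\ll Y|s|^3/Z^3$, rescale $s=u\sqrt{Z^2/Y}$ so the quadratic phase becomes $u^2/2$ up to a unit, and expand $e^{i\phi_3}$ in Taylor series. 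Each resulting integral is a standard Gaussian-type oscillatory integral, and summing them gives the main term $e^{i\phi(t_0)}/\sqrt{\phi''(t_0)}\cdot F_T(t_0)$ with error controlled by a further application of (i).

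The main obstacle is verifying that the amplitude $F_T(t_0)$ produced by the saddle point analysis is genuinely $X$-inert \emph{as a function of $t_0$}, not merely smooth. To check this, one differentiates the localized integral with respect to $t_0$ under the integral sign; each $\partial_{t_0}$ either hits the cutoff $\eta$ (gaining $\lambda^{-1}=\sqrt{R}/Z \ll X/Z$, which is acceptable since $X^2 \leq Y/R$ forces $\sqrt{R}/Z \leq X/Z\cdot X/\sqrt{Y}$; absorb the extra $X/\sqrt{Y}\leq 1$ into the error) or the Taylor remainder $\phi_3(s;t_0)$, whose $t_0$-derivatives have the same shape as $\phi$'s own derivatives by hypothesis. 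A careful induction on the order of differentiation confirms the desired $X$-inertness, completing the proof.
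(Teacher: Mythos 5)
The paper does not prove this lemma; it simply cites \cite[\S 8]{BlomerKhanYoung} and \cite[Lemma 3.1]{KPY}, so there is no in-paper proof to compare against, and your proposal must stand on its own.

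Your treatment of part (i) is correct. The iterated operator $L(g) = -\bigl(g/(i\phi')\bigr)'$, the combinatorial bookkeeping showing $L^{A}(w) \ll (X/Y)^A$, and the final chain $X/Y \leq 1/(RX) \leq 1/R$ are all sound; this is the standard non-stationary-phase integration-by-parts argument in the inert-function framework.

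Part (ii) has a genuine gap, and it sits exactly where you flagged trouble. You localize at scale $\lambda = Z/\sqrt{R}$, and the cutoff $\eta\bigl((t-t_0)/\lambda\bigr)$ then has $j$-th derivatives of size $\lambda^{-j} = (\sqrt{R}/Z)^j$. For the localized amplitude to remain $X$-inert (and for the derivatives landing on $\eta$ in the inertness-of-$F_T$ check to be ``acceptable''), you need $\sqrt{R}/Z \ll X/Z$, i.e.\ $R \ll X^2$. To justify this you assert $\sqrt{R}/Z \leq (X/Z)\cdot(X/\sqrt{Y})$; but that inequality is equivalent to $R \leq X^4/Y$, which does \emph{not} follow from the hypothesis $R \leq Y/X^2$. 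Indeed the two bounds point in opposite directions once $Y > X^3$ (e.g.\ $X=1$, $Y=R$ large makes $R \leq Y/X^2$ true while $R \leq X^4/Y$ fails badly). Since the hypotheses allow $R \gg X^2$, the cutoff $\eta$ genuinely can oscillate faster than an $X$-inert function, so neither the tail estimate via part (i) (whose amplitude would no longer satisfy the needed derivative bounds with the same $X$) nor the claimed $X$-inertness of $F_T$ is established by the argument as written. The fix is non-trivial: the natural localization width for the stationary point is governed by $Z/\sqrt{Y}$ and $ZX/\sqrt{Y}$ rather than $Z/\sqrt{R}$, and the careful derivative-counting in $t_0$ is precisely the technical content of the cited BKY/KPY lemmas. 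Your Taylor-expansion sketch of the main term and the closing ``induction confirms inertness'' are not a substitute for that analysis; as it stands, part (ii) is incomplete.
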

\begin{proof}
  See \cite[\S 8]{BlomerKhanYoung} and \cite[Lemma 3.1]{KPY}.
\end{proof}


\section{Reduction}\label{sec:reduction}

Now we start to prove Theorem \ref{thm:subconv3*2}. We assume $t\geq M^{\varepsilon}$.
Recall that, Lemma  \ref{lemma:L<<SrN}, we are considering $S(r,N)$ with $N\ll (Mt)^{3+\varepsilon}/r^{2}$, $r\ll M^{1/8}t^{3/10}$, and $(r,M)=1$.
We will prove the following proposition.


\begin{proposition}\label{reduction prop}
  We have
  \[
    S(r,N) \ll N^{1/2+\varepsilon} M^{3/2-1/16} t^{3/2-3/20} ,
  \]
  for $N\ll (Mt)^{3+\varepsilon}/r^2$, $r\ll M^{1/8} t^{3/10}$ and $(r,M)=1$.
\end{proposition}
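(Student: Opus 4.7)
The plan is to follow the outline described in the introduction, using the Duke--Friedlander--Iwaniec delta method as the basic mechanism to separate the $\GL(3)$ oscillation $A(r,n)$ from the $\GL(2)\times\GL(1)$ oscillation $\lambda_f(n)\chi(n)n^{-it}$, combined with Munshi's conductor-lowering trick for the $M$-aspect. Concretely, I would introduce an auxiliary $\ell$-sum of length $L$ against $\overline{A(1,\ell)}$ to force an extra averaging and equate $n \equiv m\ell \pmod{M}$ in order to lower the $M$-conductor; the remaining congruence is detected by Lemma~\ref{lemma:delta} with $Q=(LN/MK)^{1/2}$ and a secondary parameter $K\leq t^{1-\varepsilon}$. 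After this step the problem is reduced to bounding a multilinear sum involving characters $a,b$ modulo $q$ and $M$, an integral against the weight $g(q,x)$, and two arithmetically structured sums in $n\sim LN$ and $m\sim N$.

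Next I would dualize both inner sums. On the $n$-side, Lemma~\ref{lemma:VSF3} ($\GL(3)$ Voronoi) converts the sum into a dual of length $n_2 \asymp L^2N^2/Q^3$ weighted by Kloosterman sums $S(\overline{aM+bq},n_2;qM)$ and an integral transform of $\psi_x$; the stationary phase analysis via Lemma~\ref{lemma:stationary_phase} will yield an oscillation of type $e(\cdots \sqrt{n_2 x})$ and a saving of $K^{1/2}$ from the $x$-integral. On the $m$-side, Lemma~\ref{lem:VSF2} ($\GL(2)$ Voronoi twisted by $\chi \pmod M$) produces a dual sum of length $m \asymp LMt^2/K$ with additive phases $e(\pm m\overline{\ell(aM+(b-u)q)}/Mq)$ and Bessel-type transforms $H^\pm$, to which \eqref{J estimates} applies. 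Combining the trivial bounds (via \eqref{eqn:RS3-1}, \eqref{eqn:RS3-2}) for the dual sums with the square-root cancellation in the $a$ and $b$ character sums and the stationary-phase saving from the $x$-integral, one checks that the total saving after this stage is exactly $N/(Mt)$, reducing to an expression of shape \eqref{eqn: re goal of case a}.

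The critical step is then a Cauchy--Schwarz in the $n_2$ variable to eliminate $A(1,n_2)$, followed by Poisson summation on the resulting smoothed square sum in $n_2$. For the zero frequency, the diagonal contribution in the opened square collapses the $\ell$ and $m$ sums and the arithmetic sums into something of size $(LQ \cdot M^2Q^2t^2/N)^{1/2}$ per term, giving $O(N^{3/4}M^{3/4}K^{3/4}/L^{1/4})$ overall; this is acceptable provided $K\leq t$ and $L\geq 1$. For the non-zero frequencies, the character sum modulo $q^2M$ is evaluated (or bounded) giving a saving of $QM^{1/4}$ beyond the dual length $(Q^2MK/(L^2N^2/Q^3))^{1/2}$, and the integral transform (again by Lemma~\ref{lemma:stationary_phase}) saves $K^{1/4}$. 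The resulting bound is $O(N^{3/4}L^{3/4}M^{1/2}t/K^{1/2})$.

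Finally, one optimizes: equating the two bounds and balancing against the $r$-loss and the convexity contribution $M^{3/2-1/16}t^{3/2-3/20}$ forces $L=M^{1/4}$ and $K=t^{4/5}$, which verifies the hypotheses $L<M^{1/3}$, $K<t$, $K>t^{1/2}$ used above and yields the claimed bound $S(r,N) \ll N^{1/2+\varepsilon}M^{3/2-1/16}t^{3/2-3/20}$ uniformly in $r$ with $(r,M)=1$. I expect the main obstacle to be the non-zero frequency contribution: one must carry out the character sum modulo $qM$ cleanly (separating the $M$ part from the $q$ part), handle the coprimality conditions $(q,\ell M)=1$ and $u\not\equiv b\pmod M$ uniformly, and ensure the stationary-phase analysis of the triple integral (in $x$ plus the two Voronoi transforms) gives a genuine $X$-inert outcome to which Lemma~\ref{lemma:stationary_phase}(i) can be applied to truncate the dual lengths to their generic sizes. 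The $r$-aspect is handled trivially since we have already removed large $r$ in \eqref{eqn:r-large}, so the remaining $r$-sum contributes only a logarithmic factor.
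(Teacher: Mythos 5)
Your proposal is correct and takes essentially the same approach as the paper, which itself sketches this argument in \S1.1: the DFI delta method combined with Munshi's conductor-lowering modulo $M$, a double ($\GL(3)$ and $\GL(2)$) Voronoi dualization, Cauchy--Schwarz in the dual $\GL(3)$ variable, Poisson summation, a separate treatment of zero and non-zero frequencies, and the optimal choices $L=M^{1/4}$, $K=t^{4/5}$. The paper's full execution in \S\S3--8 additionally requires a somewhat elaborate case split (Cases (a)--(c) in \S4, governed by the relative sizes of $\frac{NLX}{MRQ}$ and $t$, and of the dual arguments of the Bessel and $\Psi$-transforms) which your proposal does not carry out, but which you correctly anticipate as the main remaining technical obstacle.
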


Let $\mathcal{L}$ be the set of primes in $[L,2L]$.
Assume $ M\notin [L,2L] $.  For $\ell\in\mathcal{L}$ and $n\geq1$, by the Hecke relation, we have
\[
  A(1,\ell) A(r,n) = A(r,\ell n) + \delta_{\ell|r}A(r/\ell,n) + \delta_{\ell|n}A(r\ell,n/\ell).
\]
By the prime number theorem for $L(s,\pi\times \tilde \pi)$ we have
\[
  L^* := \sum_{\ell\in\mathcal{L}} |A(1,\ell)|^2 \gg L^{1-\varepsilon}.
\]
We have
\begin{align*}
   S(r,N) = & \frac{1}{L^*} \sum_{\ell\in\mathcal{L}} \overline{A(1,\ell)} \sum_{n\geq1} A(r,n)A(1,\ell) \lambda_f(n)\chi(n)  n^{-it} V\left(\frac{n}{N}\right)
   \\
  = & S_1(N)+S_2(N)+S_3(N),
\end{align*}
where
\[
  S_1(N) = \frac{1}{L^*} \sum_{\ell\in\mathcal{L}} \overline{A(1,\ell)} \sum_{n\geq1} A(r,n\ell)\lambda_f(n) \chi(n)  n^{-it} V\left(\frac{n}{N}\right),
\]
\begin{align*}
  S_2(N) & = \frac{1}{L^*} \sum_{\ell\in\mathcal{L}} \overline{A(1,\ell)} \sum_{n\geq1} \delta_{\ell|r}A(r/\ell,n) \lambda_f(n) \chi(n)  n^{-it} V\left(\frac{n}{N}\right),
\end{align*}
and
\begin{align*}
  S_3(N) & =\frac{1}{L^*} \sum_{\ell\in\mathcal{L}} \overline{A(1,\ell)} \sum_{n\geq1} \delta_{\ell|n}A(r\ell,n/\ell) \lambda_f(n) \chi(n)  n^{-it} V\left(\frac{n}{N}\right).
\end{align*}

We only consider $S_1(N)$, since the same method works for the other two sums and will give better bounds as the lengths of those sums are smaller.
Actually, in $S_2$, since $\ell \mid r$, only $\tau(r)$ $\ell$'s contribute;
in $S_3$, since $\ell \mid n$, the length of the $n$-sum is of size $\frac{N}{L}$.
As the structures of sums in $S_2$ and $S_3$ are the same as in $S_1$, we can get better bounds
than $S_1$.
Now we apply
$
  \frac{1}{M}\underset{{b(\bmod M)}}{\sum}e(\frac{(n-m\ell)b}{M})
$ to detect the condition $M\mid (n-m\ell)$, and then use the delta method, obtaining
\begin{align*}
  S_1(N)  & = \frac{1}{L^*} \sum_{\ell\in\mathcal{L}} \overline{A(1,\ell)} \frac{1}{M} \sum_{b\bmod M}
  \sum_{n\geq1} A(r,n) W\left(\frac{n}{\ell N}\right) \\
  & \hskip 50pt \cdot
  \sum_{m\geq1 } \lambda_f(m)\chi(m)  m^{-it} V\left(\frac{m}{N}\right) e\left(\frac{(n-m\ell)b}{M}\right) \\
  & \hskip 50pt \cdot
  \frac{1}{Q}\sum_{1\leq q\leq Q} \;\frac{1}{q}\; \sideset{}{^\star}\sum_{a\bmod{q}}e\left(\frac{(n-m\ell)a}{Mq}\right) \int_\mathbb{R}g(q,x) e\left(\frac{(n-m\ell)x}{MqQ}\right)\mathrm{d}x .
\end{align*}
Rearranging the order of the sums and integrals we get
\begin{align*}
  S_1(N)
  & = \frac{1}{L^*} \sum_{\ell\in\mathcal{L}} \overline{A(1,\ell)} \frac{1}{M}\sum_{b\bmod M}
  \frac{1}{Q}\sum_{1\leq q\leq Q} \int_\mathbb{R}g(q,x) \;\frac{1}{q}\; \sideset{}{^\star}\sum_{a\bmod{q}} \\
  & \hskip 50pt \cdot
  \sum_{n\geq1} A(r,n) e\left(\frac{n(bq+a)}{Mq}\right) W\left(\frac{n}{\ell N}\right)
   e\left(\frac{nx}{pqQ}\right)
   \\
  & \hskip 50pt \cdot \sum_{m\geq1 } \lambda_f(m)\chi(m)  m^{-it} e\left(\frac{-m\ell (bq+a)}{Mq}\right)  V\left(\frac{m}{N}\right)
  e\left(\frac{-m\ell x}{MqQ}\right)  \mathrm{d}x.
\end{align*}
Inserting a smooth partition of unity for the $x$-integral and a dyadic partition for the $q$-sum, we get
\[
  S_1(N) \ll  N^\varepsilon \sup_{t^{-B} \ll X \ll t^\varepsilon}
  \sup_{1\ll R\ll Q} \sum_{1\leq j\leq 3} |S_{1j}^{\pm}(N,X,R)|  + O(t^{-A}),
\]
for any large constant $A>0$ and some large constant $B>0$ depending on $A$,
where
$S_{11}^{\pm}(N,X,R)$, $S_{12}^{\pm}(N,X,R)$ and $S_{13}^{\pm}(N,X,R)$
denote the terms with $(b,M)=1$, $M \mid b$ and $(q,\ell M)>1$, respectively.
More precisely, we have
\begin{align}\label{S11 pm}
 \begin{split}
  S_{11}^{\pm}(N,X,R)  & = \frac{1}{L^*} \sum_{\ell\in\mathcal{L}} \overline{A(1,\ell)} \int_\mathbb{R}
  \frac{1}{M}  \; \sideset{}{^\star}\sum_{b\bmod M}
  \frac{1}{Q}\sum_{\substack{q\sim R \\ (q,\ell M)=1}}
  \frac{1}{q} \; \sideset{}{^\star}\sum_{a\bmod q} g(q,x)U\left(\frac{\pm x}{X}\right) \\
  & \hskip 15pt \cdot
  \sum_{n\geq1} A(r,n)   e\left(\frac{n(aM+bq)}{qM} \right) W\left(\frac{n}{\ell N}\right) e\left(\frac{n x}{MqQ}\right)  \\
  & \hskip 15pt \cdot
  \sum_{m\geq1 }\lambda_f(m) \chi(m)   e\left(\frac{-m\ell (aM+bq)}{qM}\right) m^{-it} V\left(\frac{m}{N}\right)
  e\left(\frac{-m\ell x}{MqQ}\right)\mathrm{d}x,
 \end{split}
\end{align}
\begin{align*}
  S_{12}^{\pm}(N,X,R)  & = \frac{1}{L^*} \sum_{\ell\in\mathcal{L}} \overline{A(1,\ell)} \int_\mathbb{R}
  \frac{1}{M}
  \frac{1}{Q}\sum_{\substack{q\sim R \\ (q,\ell M)=1}}
  \frac{1}{q} \; \sideset{}{^\star}\sum_{a\bmod q} g(q,x)U\left(\frac{\pm x}{X}\right) \\
  & \hskip 15pt \cdot
  \sum_{n\geq1} A(r,n)   e\left(\frac{na}{q} \right) W\left(\frac{n}{\ell N}\right) e\left(\frac{n x}{MqQ}\right)  \\
  & \hskip 15pt \cdot
  \sum_{m\geq1 }\lambda_f(m) \chi(m)   e\left(\frac{-m\ell a}{q}\right) m^{-it} V\left(\frac{m}{N}\right)
  e\left(\frac{-m\ell x}{MqQ}\right)\mathrm{d}x,
\end{align*}
and
\begin{align*}
  S_{13}^{\pm}(N,X,R)  & = \frac{1}{L^*} \sum_{\ell\in\mathcal{L}} \overline{A(1,\ell)} \int_\mathbb{R}
  \frac{1}{M} \; \sum_{b\bmod M}
  \frac{1}{Q}\sum_{\substack{q\sim R \\ (q,\ell M)>1}}
  \frac{1}{q} \; \sideset{}{^\star}\sum_{a\bmod q} g(q,x)U\left(\frac{\pm x}{X}\right) \\
  & \hskip 15pt \cdot
  \sum_{n\geq1} A(r,n)   e\left(\frac{n(a+bq)}{qM} \right) W\left(\frac{n}{\ell N}\right) e\left(\frac{n x}{MqQ}\right)  \\
  & \hskip 15pt \cdot
  \sum_{m\geq1 }\lambda_f(m) \chi(m)   e\left(\frac{-m\ell (a+bq)}{qM}\right) m^{-it} V\left(\frac{m}{N}\right)
  e\left(\frac{-m\ell x}{MqQ}\right)\mathrm{d}x.
\end{align*}
Noth that in $S_{11}^{\pm}(N,X,R)$ and $S_{12}^{\pm}(N,X,R)$,
we have made a change of variable $a\rightarrow aM$.
Here $U$ is a fixed compactly supported $1$-inert function with  $\supp U \subset (0,\infty)$.
We will only give details for the treatment of $S_{11}^{\pm}(N,X,R)$, since the same method works for $S_{12}^{\pm}(N,X,R)$ and $S_{13}^{\pm}(N,X,R)$  and will give a better upper bound.
More precisely, in $S_{12}^{\pm}(N,X,R)$, we do not have the $b$-sum.
In $S_{13}^{\pm}(N,X,R)$, we have the condition $(q,\ell M)>1$.
In fact, we should have the following cases:
\begin{itemize}
\item [(i)] $b\equiv 0\bmod M$ and $q=\ell^j q'$ with $j\geq1$ and $(q',\ell M)=1$;
\item [(ii)] $b\equiv 0\bmod M$ and $q=M^k q'$ with $k\geq1$ and $(q',\ell M)=1$;
\item [(iii)] $b\equiv 0\bmod M$ and $q=\ell^j M^k q'$ with $j,k\geq1$ and $(q',\ell M)=1$;
\item [(iv)] $(b,M)=1$ and $q=\ell^j q'$ with $j\geq1$ and $(q',\ell M)=1$;
\item [(v)] $(b,M)=1$ and $q=M^k q'$ with $k\geq1$ and $(q',\ell M)=1$;
\item [(vi)] $(b,M)=1$ and $q=\ell^j M^k q'$ with $j,k\geq1$ and $(q',\ell M)=1$.
\end{itemize}


\section{Applying Voronoi}\label{sec:voronoi}
We first apply the Voronoi summation formula (see Lemma \ref{lemma:VSF3}) to the sum over $n$ in $S_{11}^{\pm}(N,X,R)$ , getting
\begin{multline}\label{eqn: use voronoi gl3}
  \sum_{n\geq1} A(r,n) e\left(\frac{n(bq+aM)}{qM}\right) W\left(\frac{n}{\ell N}\right)
   e\left(\frac{n x}{MqQ}\right) \\
   = qM \sum_{\eta_1=\pm 1}\sum_{n_1|qMr}\sum_{n_2=1}^\infty \frac{A(n_1,n_2)}{n_1n_2} S(r\overline{(aM+bq)},\eta_1n_2;qMr/n_1)\Psi_x^{\sgn(\eta_1)}\left(\frac{n_1^2n_2}{q^3M^3r}\right),
\end{multline}
where $\Psi_x^{\sgn(\eta_1)}(z)$ is defined as in Lemma \ref{lemma:VSF3} with $\psi(y)$ replaced by
$W(\frac{y}{\ell N})e(\frac{xy}{MqQ})$.
\begin{lemma}\label{Huang lem 4.1}
  \begin{itemize}
    \item [(i)] If $zNL\gg t^{\varepsilon}$, then $\Psi_x^{\eta_1}(z)$ is negligibly small
    unless $\sgn(x)=-\sgn(\eta_1)$ and $\frac{N\ell (-\eta_1x)}{MqQ}\asymp (zN\ell)^{1/3}$,
    in which case we have
    \begin{align*}
      \Psi_x^{\sgn(\eta_1)}(z)= (zN\ell)^{1/2}e\left(\eta_1 \frac{2(zMqQ)^{1/2}}{(-\eta_1x)^{1/2}}\right)
      \mathcal{W}\left(\frac{(z^{1/2}(MqQ)^{3/2}}{N\ell(-\eta_1x)^{3/2}}\right)+O(t^{-A}),
    \end{align*}
    where $\mathcal{W}$ is a certain compactly supported $1$-inert function depending on $A$.
    \item [(ii)] If $zNL\ll t^\varepsilon$ and $\frac{NL X}{MRQ}\gg t^\varepsilon$,
    then $\Psi_x^{\sgn(\eta_1)}(z)\ll t^{-A}$.
    \item [(iii)] If $zNL \ll t^\varepsilon$ and $\frac{NL X}{MRQ}\ll t^\varepsilon$,
    then $\Psi_x^{\sgn(\eta_1)}(z)\ll t^{\varepsilon}$.
  \end{itemize}
\end{lemma}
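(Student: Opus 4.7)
The plan is to start from the Mellin--Barnes representation \eqref{eqn:Psi} of $\Psi_x^{\eta_1}(z)$, apply Stirling's asymptotic to $\gamma_3^{\pm}(s)$, and carry out a joint stationary phase analysis in the two variables $T = \Im s$ and the rescaled Voronoi variable $\xi = y/(\ell N)$, following the scheme used by Huang \cite{Huang} in his earlier work on $\GL(3)$ hybrid subconvexity (which itself goes back to analyses of the $\GL(3)$ Voronoi kernel by Li and by Munshi). Substituting $\psi(y) = W(y/(\ell N)) e(xy/(MqQ))$ and rescaling gives
\[
  \Psi_x^{\pm}(z) = z\ell N \int_{(\sigma)} (\pi^3 z\ell N)^{-s} \gamma_3^{\pm}(s) \int_0^\infty W(\xi) \xi^{-s} e(B\xi)\, \dd\xi\, \frac{\dd s}{2\pi i},
\]
with $B = x\ell N/(MqQ)$. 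Stirling (using $\sum_j \alpha_j = 0$) then shows that for $s = \sigma + iT$ with $|T|$ large, the total phase of the double integrand has the form $\Phi(T,\xi) = 3T\log(|T|/2) - 3T - T\log(\pi^3 z\ell N) + 2\pi B\xi - T\log\xi$, up to constant phases coupling $\sgn(T)$ to the $\pm$ label.

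For case (i), I plan to apply Lemma \ref{lemma:stationary_phase} consecutively to the $T$-integral and then the $\xi$-integral. Solving $\partial_T\Phi = \partial_\xi\Phi = 0$ yields $T_0 = 2\pi B\xi_0$ and $T_0^3 = 8\pi^3 z\ell N\,\xi_0$, forcing $\xi_0^2 = z\ell N/B^3$. A saddle $\xi_0 \asymp 1$ in the support of $W$ therefore requires $|B|^3 \asymp z\ell N$, i.e., $N\ell|x|/(MqQ) \asymp (zN\ell)^{1/3}$; otherwise part (i) of Lemma \ref{lemma:stationary_phase} kills the contribution. The signs in the Stirling expansion couple $\sgn(T)$ with the $\pm$ label, and only one choice of $\sgn(B)$ yields a valid saddle for a given $\eta_1$, producing the sign constraint $\sgn(x) = -\sgn(\eta_1)$. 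Evaluating $\Phi$ at the saddle, the $T_0\log(\cdot)$ terms cancel via the second saddle equation, leaving $\Phi_0 = -2T_0 = -4\pi\sgn(B)(zMqQ)^{1/2}/|x|^{1/2}$, which is exactly $2\pi \cdot \eta_1 \cdot 2(zMqQ)^{1/2}/(-\eta_1 x)^{1/2}$. The amplitude, after absorbing $|T_0|^{3\sigma-3/2}$ and $\sqrt{2\pi/|\det\text{Hess}|}$, equals $(zN\ell)^{1/2}$ times a $1$-inert function of $\xi_0 \asymp z^{1/2}(MqQ)^{3/2}/(N\ell(-\eta_1 x)^{3/2})$, which is the desired $\mathcal{W}$.

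For case (ii), with $z\ell N \ll t^\varepsilon$ but $|B| \gg t^\varepsilon$, integration by parts repeatedly in $\xi$ shows that the inner integral is effectively supported on $|T| \gtrsim |B|$; shifting the $s$-contour to a large positive $\sigma$ then uses the decay $(\pi^3 z\ell N)^{-\sigma}$, which beats the polynomial growth of $\gamma_3^{\pm}$ and gives $\Psi_x^{\pm}(z) \ll t^{-A}$. For case (iii), with both $z\ell N$ and $|B|$ bounded by $t^\varepsilon$, the inner integral enjoys the usual smooth-function Mellin decay $\ll_A (1+|T|)^{-A}$, and the Mellin integral is bounded trivially by $t^\varepsilon$. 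The main technical burden will be the case (i) bookkeeping: tracking the sign conventions in the Stirling asymptotic of $\gamma_3^{\pm}(s)$ separately for $T > 0$ and $T < 0$, pairing them with $\eta_1 = \pm 1$, and verifying that the phase and sign of the resulting exponential match the claimed form exactly.
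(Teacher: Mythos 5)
Your treatment of cases (i) and (iii) is in line with what Huang does in the cited reference \cite{Huang}: Mellin--Barnes plus Stirling plus (non)stationary phase, and I have checked that your saddle-point equations, the phase $\Phi_0=-2T_0$, and the amplitude $(z\ell N)^{1/2}$ (using $|T_0|\asymp(z\ell N)^{1/3}$ to balance $|T_0|^{3\sigma-3/2}(z\ell N)^{1-\sigma}$) all come out correctly. The paper itself proves nothing here, only cites \cite[\S 5.3]{Huang}, so the comparison is necessarily with the cited argument, and your outline for (i) and (iii) matches it.

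Your argument for case (ii), however, does not work as written. You propose to shift the $s$-contour to large positive $\sigma$ and use the decay of $(\pi^3 z\ell N)^{-\sigma}$ against what you call ``the polynomial growth of $\gamma_3^\pm$.'' That growth is not polynomial. By the reflection and duplication formulas,
\[
\frac{\Gamma\big(\tfrac{s+\alpha_j}{2}\big)}{\Gamma\big(\tfrac{1-s-\alpha_j}{2}\big)}
= \frac{1}{\pi}\,\Gamma\Big(\tfrac{s+\alpha_j}{2}\Big)\Gamma\Big(\tfrac{1+s+\alpha_j}{2}\Big)\sin\Big(\tfrac{\pi(1-s-\alpha_j)}{2}\Big)
= \frac{2^{1-s-\alpha_j}}{\sqrt\pi}\,\Gamma(s+\alpha_j)\,\sin\Big(\tfrac{\pi(1-s-\alpha_j)}{2}\Big),
\]
so each factor, and hence $\gamma_3^\pm(\sigma+iT)$ for fixed $T$, grows like $|\Gamma(\sigma)|^3$ as $\sigma\to\infty$, i.e.\ factorially. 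Exponential decay $(z\ell N)^{-\sigma}$ cannot beat that. Even if one first evaluates the inner $\xi$-integral by stationary phase and only tracks the Stirling magnitude $|T|^{3\sigma-3/2}$ at $|T|\asymp|B|$, the $\sigma$-dependent part is $(|B|^3/(z\ell N))^\sigma$; in case (ii) one has $|B|\gg t^\varepsilon$ and $z\ell N\ll t^\varepsilon$, so $|B|^3/(z\ell N)\gg t^{2\varepsilon}>1$ and moving $\sigma$ to the right makes the integrand \emph{larger}, not smaller. The correct mechanism (as in the treatment surrounding \eqref{eqn:Psi=*W}) is to stay on a fixed vertical line, say $\sigma=1/2$, and argue by nonstationary phase: for $|\tau|\not\asymp|B|$ the $\xi$-derivative of the phase is $\gg\max(|B|,|\tau|)\gg t^\varepsilon$, while for $|\tau|\asymp|B|$ the $\xi$-integral localises at $\xi_0=\tau/(2\pi B)$ and the resulting $\tau$-phase has derivative of size $\log(|\tau|^3/(8\pi^3 z\ell N\xi_0))\gg\varepsilon\log t$, so repeated integration by parts (or Lemma \ref{lemma:stationary_phase}(i) with $Z\asymp|\tau|$, $Y\asymp|\tau|\log t$) gives $O(t^{-A})$. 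You need to replace the contour-shift by this nonstationary-phase estimate before case (ii) is proved.
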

\begin{proof}
  See \cite[5.3]{Huang}.
\end{proof}

In the last case, by taking $\sigma=1/2$ and making a change of variable, we get
\begin{align*}
  \Psi_x^\pm(z) & = (z\ell N)^{1/2} \frac{1}{2\pi^{5/2} } \int_{\mathbb{R}} (\pi^3 z \ell N)^{-i\tau} \gamma_3^\pm(1/2+i\tau) \int_{0}^{\infty} W\left(\xi\right) e\left(\frac{x\ell N\xi}{MqQ} \right)  \xi^{-1/2-i\tau} \dd \xi  \dd \tau.
\end{align*}
We can truncate $\tau$ at $\tau\ll t^\varepsilon$ with a negligibly small error by repeated integration by parts for the $\xi$-integral above. That is,  we have
\begin{equation}\label{eqn:Psi=*W}
  \Psi_x^\pm(z)=(z\ell N)^{1/2} W_{x,\ell}^\pm(z)+O(t^{-A}),
\end{equation}
where
\begin{align*}
  W_{x,\ell}^\pm(z) & =  \frac{1}{2\pi^{5/2} } \int_{|\tau|\leq t^\varepsilon} (\pi^3 z \ell N)^{-i\tau} \gamma_3^\pm(1/2+i\tau) \int_{0}^{\infty} W\left(\xi\right) e\left(\frac{x\ell N\xi}{MqQ} \right)  \xi^{-1/2-i\tau} \dd \xi  \dd \tau.
\end{align*}
The contribution from the error to $S_{11}^\pm(N,X,R)$ is also negligibly small.
Note that the function $W_{x,\ell}^\pm(z)$ satisfies that
\begin{equation}\label{eqn:W(z)}
  \frac{\partial^j}{\partial z^j} W_{x,\ell}^\pm(z) \ll_j t^\varepsilon z^{-j}.
\end{equation}

Now we consider the $m$-sum.
By
\[
  \chi(m)=\bar{\chi}(\ell)\chi(m\ell)=\frac{\bar{\chi}(\ell)}{\tau(\bar{\chi})}\sum_{u\bmod M}\bar{\chi}(u)
  e\left(\frac{um\ell}{M}\right),
\]
one has
\begin{multline*}
  \sum_{m\geq1} \lambda_f(m)\chi(m) m^{-it} e\left(\frac{-m\ell (bq+aM)}{Mq}\right)  V\left(\frac{m}{N}\right)
  e\left(\frac{-m\ell x}{MqQ}\right)
  \\
  =\frac{1}{\tau(\bar{\chi})}  \sum_{m\geq 1}\lambda_f(m)m^{-it}V\left(\frac{m}{N}\right)e\left(-\frac{m\ell x}{MqQ}\right)
  \\
  \cdot \left(\sum_{\substack{u \bmod M \\ u\not\equiv b \bmod M}}
  \bar{\chi}(u\ell)
  \left(e\left(-\frac{m\ell(aM+(b-u)q)}{Mq}\right)+e\left(-\frac{m\ell a}{q}\right)\right)\right)
  =: \Sigma_1+\Sigma_2,
\end{multline*}
say.
From now on, we only deal with the terms involving $\Sigma_1$,
since the treatment of
$\Sigma_2$ is similar and in fact simpler.
With the help of Lemma \ref{lem:VSF2}, we obtain
\begin{multline}\label{eqn: use voronoi gl2}
  \Sigma_1
  =\frac{N^{1-it}}{Mq\tau(\bar{\chi})}\sum_{\substack{u \bmod M \\u\not\equiv b \bmod M}}\bar{\chi}(u\ell)\sum_{\pm}\sum_{m\geq 1}
  \lambda_f(m)e\left(\pm\frac{m\overline{\ell(aM+(b-u)q)}}{Mq}\right)
  H^{\pm}\left(\frac{mN}{M^2q^2}\right),
\end{multline}
where $H^{\pm}$ is defined as in Lemma \ref{lem:VSF2} with $g(\xi)$ replaced by $V(\xi)\xi^{-it}e(-\frac{N\ell x\xi}{MqQ})$.
\begin{lemma}\label{GL2 Hpm}
    If $z\ll t^\varepsilon$, then $H^\pm(z)$ is negligible unless $t\asymp\frac{N\ell X}{MqQ}$ and $x<0$.
\end{lemma}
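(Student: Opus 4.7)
The plan is a non-stationary phase argument applied to the $\xi$-integral defining $H^\pm(z)$. With $g(\xi)=V(\xi)\xi^{-it}e\!\left(-\frac{N\ell x\xi}{MqQ}\right)$, the phase coming from the non-Bessel portion of the integrand is
\[
\phi(\xi)\,=\,-t\log\xi\,-\,\frac{2\pi N\ell x\xi}{MqQ},\qquad \phi'(\xi)\,=\,-\frac{t}{\xi}\,-\,\frac{2\pi N\ell x}{MqQ}.
\]
Since $V$ is supported on $\xi\asymp 1$ and $|x|\asymp X$, the equation $\phi'(\xi)=0$ can have a root in $[1,2]$ only when $x<0$ and $t\asymp \frac{N\ell X}{MqQ}$. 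Outside this resonance we have $|\phi'(\xi)|\gg \max\{t,\,N\ell X/(MqQ)\}\gg t^{1-\varepsilon}$, the bound $\gg t^{1-\varepsilon}$ being immediate from the term $t/\xi$.

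Next, I would verify that the Bessel factor $J_{\pm 2it_f}(4\pi\sqrt{z\xi})$ or $K_{2it_f}(4\pi\sqrt{z\xi})$ is $t^\varepsilon$-inert as a function of $\xi\in[1,2]$. Since $z\ll t^\varepsilon$ and $\xi\in[1,2]$, the argument $4\pi\sqrt{z\xi}$ is $\ll t^\varepsilon$; differentiating via the chain rule introduces factors of $(z/\xi)^{1/2}$ which combine with the bounds in \eqref{Bessel for small y} to yield
\[
\xi^k\,\frac{\partial^k}{\partial\xi^k} J_{\pm 2it_f}\bigl(4\pi\sqrt{z\xi}\bigr)\ll_k 1,\qquad \xi^k\,\frac{\partial^k}{\partial\xi^k} K_{2it_f}\bigl(4\pi\sqrt{z\xi}\bigr)\ll_k (1+|\log(z\xi)|)^k\ll t^{k\varepsilon}.
\]
Hence the full $\xi$-integrand, after extracting $e^{i\phi(\xi)}$, is a $t^\varepsilon$-inert weight on $[1,2]$.

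In the off-resonance regime I would then invoke Lemma \ref{lemma:stationary_phase}(i) with $Z=1$, $X_T=t^\varepsilon$ and $Y=t$, which gives $R=Y/X_T^2\gg t^{1-3\varepsilon}$ and hence $H^\pm(z)\ll_A t^{-A}$, establishing the lemma. The main obstacle is the uniform $t^\varepsilon$-inertness of the Bessel factor for $z\ll t^\varepsilon$: the logarithmic factor in the $K$-Bessel estimate from \eqref{Bessel for small y} must be shown not to generate worse than polylogarithmic growth under repeated $\xi$-differentiation (and an analogous check, without logarithms, is needed for $J_{\pm 2it_f}$). Once this is done, the non-stationary phase bound closes the proof at once.
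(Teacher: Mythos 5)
Your proof is correct and takes essentially the same route as the paper: integration by parts (non-stationary phase) applied to the $\xi$-integral, treating the Bessel factor $J_f(z\xi)$ as a $t^\varepsilon$-inert weight on $[1,2]$ via the small-argument bounds \eqref{Bessel for small y}. One small adjustment: when invoking Lemma \ref{lemma:stationary_phase}(i) you should take $Y\asymp\max\{t,\,N\ell X/(MqQ)\}$ rather than $Y=t$, since in the regime $N\ell X/(MqQ)\gg t$ the hypothesis $\phi^{(j)}\ll Y/Z^j$ (in particular for $j=1$) would otherwise fail; with that choice the off-resonance decay $H^\pm(z)\ll_A t^{-A}$ follows as you intended.
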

\begin{proof}
  If $z\ll t^{\varepsilon}$, then, in view of \eqref{eqn:G+} and \eqref{eqn:G-}, we may regard $H^{\pm}(z)$ as
\begin{align}\label{eqn:mathcal(I)}
  \mathcal{I}(z):=\int_{0}^\infty V(\xi)e\left(-\frac{t\log \xi}{2\pi}-\frac{N\ell x\xi}{MqQ}\right)J_f(z\xi) \dd \xi,
\end{align}
where $J_f(z)=\frac{-\pi}{\sin(\pi it_f)}(J_{2it_f}(4\pi\sqrt{z})-J_{-2it_f}(4\pi\sqrt{z}))$ or
$J_f(z)=4\epsilon_f\cosh(\pi t_f)K_{2it_f}(4\pi\sqrt{z})$.
Then, by partial integration together with \eqref{Bessel for small y},
$\mathcal{I}_1(z)$ is negligible unless
$x<0$ and $\frac{NLX}{MRQ}\asymp t$.
\end{proof}

If $\frac{mN}{M^2q^2}\gg t^{\varepsilon}$, then, in view of \eqref{K estimates},
$H^-(\frac{mN}{M^2q^2})$ is negligible.
For the term in \eqref{eqn: use voronoi gl2} involving $H^+$, with the help of \eqref{J estimates},
we may replace it by
\begin{multline}\label{leading term}
  \frac{N^{3/4-it}}{M^{1/2}q^{1/2}\tau(\bar{\chi})}\sum_{\substack{u \bmod M \\u\neq b}}\bar{\chi}(u\ell)\sum_{\eta_2=\pm 1}\sum_{m\geq 1}
  \frac{\lambda_f(m)}{m^{1/4}}e\left(\frac{m\overline{\ell(aM+(b-u)q)}}{Mq}\right)
  \\
  \cdot \int_{\mathbb{R}}\xi^{-1/4}V(\xi)e\left(-\frac{t\log \xi}{2\pi}+\eta_2\frac{2\sqrt{mN\xi}}{Mq}-\frac{N\ell x\xi}{MqQ}\right)\dd \xi.
\end{multline}
Note that we have $\ell\asymp L$, $|x|\asymp X$ and $q\asymp R$.
By Lemma \ref{Huang lem 4.1} and Lemma \ref{GL2 Hpm} and according to
the size of $\frac{N\ell x}{MqQ}$, $\frac{n_1^2n_2N\ell}{q^3M^3r}$ and $\frac{mN}{M^2q^2}$,
we can
reduce $S_1^\pm(N,X,R)$ to the following three cases:

$\mathbf{Case\ (a):}$
\begin{align}\label{case:a}
  \bm{\frac{NLX}{MRQ}\asymp \left(\frac{n_1^2n_2NL}{R^3M^3r}\right)^{1/3}\gg t^\varepsilon}, \quad
  \bm{\frac{mN}{M^2R^2}\gg t^\varepsilon.}
\end{align}
In this case, we insert \eqref{eqn: use voronoi gl3} and \eqref{eqn: use voronoi gl2}
into \eqref{S11 pm} and use Lemma \ref{Huang lem 4.1} (i) and \eqref{leading term}, so that
it is sufficient to estimate
\begin{align}\label{eqn: goal of case a}
 \begin{split}
  &\frac{N^{5/4-it}}{\tau(\bar{\chi})M^{2}LQr^{1/2}} \sum_{\ell\in\mathcal{L}}
  \overline{A(1,\ell)\chi(\ell)}\ell^{1/2}
  \; \sideset{}{^\star}\sum_{b\bmod M}
  \sum_{\substack{q\sim R \\ (q,\ell M)=1}}\frac{1}{q^2}
  \;\sideset{}{^\star}\sum_{a\bmod{q}}
  \;\sum_{\substack{u\bmod M \\u\neq b}}\bar{\chi}(u)
  \\
  & \cdot
  \sum_{m\geq 1}
  \frac{\lambda_f(m)}{m^{1/4}}e\left(\frac{m\overline{\ell(aM+(b-u)q)}}{Mq}\right)
  \sum_{\eta_1,\eta_2=\pm1}\sum_{n_1|qMr}
  \sum_{n_2\asymp \frac{N_0}{n_1^2}} \frac{A(n_1,n_2)}{n_2^{1/2}}
  \\
  & \cdot
  S(r\overline{(aM+bq)},\eta_1 n_2;qMr/n_1)\int_{\mathbb{R}}\xi^{-1/4}V(\xi)
  e\left(-\frac{it\log \xi}{2\pi}+\eta_2\frac{2\sqrt{mN\xi}}{Mq}\right)
  \\
  & \cdot
  \int_\mathbb{R}g(q,x)
  e\left(-\frac{N\ell x\xi}{MqQ} +\eta_1 \frac{2(n_1^2n_2Q)^{1/2}}{Mq((-\eta_1 rx))^{1/2}}\right)
  \mathcal{W}\left(\frac{Q^{3/2}(n_1^2n_2)^{1/2}}{ r^{1/2}(-\eta_1 x)^{3/2}N\ell}\right) U\left(\frac{-\eta_1 x}{X}\right)\dd x \dd \xi,
 \end{split}
\end{align}
where $N_0=\frac{N^2L^2X^3r}{Q^3}$.
Let $x=-\eta_1 Xv$. Then the resulting $x$-integral becomes
\begin{align}\label{v-integral}
  & -\eta_1 X\int_{\mathbb{R}}e\left(\eta_1\frac{N\ell X\xi v}{MqQ} + \eta_1\frac{2(n_1^2n_2Q)^{1/2}}{Mq(rXv)^{1/2}}\right)
  g\left(q,-\eta_1 Xv\right)U(v)
  W\left(\frac{Q^{3/2}(n_1^2n_2)^{1/2}}{ r^{1/2}(Xv)^{3/2}N\ell}\right)\dd v.
\end{align}
Let
\[
  h(v) = \eta_1\frac{N\ell X\xi v}{MqQ} + \eta_1\frac{2(n_1^2n_2Q)^{1/2}}{Mq(rXv)^{1/2}}.
\]
Then
\[
  h'(v) = \eta_1\frac{N\ell X\xi}{MqQ} - \eta_1\frac{(n_1^2n_2Q)^{1/2}}{Mq(rX)^{1/2}}v^{-3/2}, \quad
  h''(v) = \eta_1\frac{3(n_1^2n_2Q)^{1/2}}{2Mq(rX)^{1/2}}v^{-5/2}.
\]
Note that the solution of $h'(v_0)=0$ is
$v_0 = \frac{(n_1^2n_2)^{1/3}Q}{r^{1/3}(N\ell\xi)^{2/3}X}\asymp1$, and
\[
  h(v_0) =  \eta_1\frac{3(n_1^2n_2N\ell\xi)^{1/3}}{r^{1/3}Mq},
  \quad
  h''(v_0) = \frac{3\eta_1}{2v_0^2} \cdot \frac{(n_1^2n_2Q)^{1/2}}{Mq(rXv_0)^{1/2}} = \frac{3\eta_1}{2v_0^2}\cdot\frac{ (n_1^2n_2N\ell\xi)^{1/3}}{r^{1/3}Mq}.
\]
By the argument below Lemma \ref{lemma:delta}, we can think $g(q,x)$ as a nice function
which satisfies
\begin{align}\label{eqn:good property for g}
  \frac{\partial^j}{\partial x^j} g(q,x) \ll Q^{\varepsilon_1} |x|^{-j},
\end{align}
up to a negligible error.
Here $\varepsilon_1$ is a small positive number such that $\frac{t^\varepsilon}{Q^{2\varepsilon_1}}\gg t^{\varepsilon/2}$.
Then, by applying
Lemma \ref{lemma:stationary_phase},
we have \eqref{v-integral} is equal to
\begin{align*}
   \frac{r^{1/6}(qM)^{1/2} X }{(n_1^2n_2N\ell\xi)^{1/6}}e\left(\eta_1\frac{ 3(n_1^2n_2N\ell\xi)^{1/3}}{r^{1/3}Mq}\right)g(q,-\eta_1Xv_0)\mathcal{U}(v_0)W\left(\frac{Q^{3/2}(n_1^2n_2)^{1/2}}{ r^{1/2}(Xv_0)^{3/2}N\ell}\right) + O(t^{-A}),
\end{align*}
where $\mathcal{U}$ is a certain compactly supported $1$-inert function depending on $A$.
We may assume $(n_1,M)=1$, since otherwise we have $M\mid n_1$ which leads to a simpler case.
Hence, by letting $\mathcal{V}(\xi)=\xi^{-5/12}V(\xi)g(q,-\eta_1Xv_0)\mathcal{U}(v_0)W(\frac{Q^{3/2}(n_1^2n_2)^{1/2}}{ r^{1/2}(Xv_0)^{3/2}N\ell})$,
at the cost of a negligible error, we can rewrite \eqref{eqn: goal of case a} as
\begin{multline}\label{eqn: re goal of case a}
  \frac{N^{13/12-it} X}{\tau(\bar{\chi})M^{3/2}LQr^{1/3}} \sum_{\ell\in\mathcal{L}}
  \overline{A(1,\ell)\chi(\ell)}\ell^{1/3}\sum_{\substack{q\sim R \\ (q,\ell M)=1}}\frac{1}{q^{3/2}}
  \\
   \cdot \sum_{\eta_1,\eta_2=\pm1}\sum_{n_1|qr} \frac{1}{n_1^{1/3}} \sum_{n_2\asymp \frac{N_0}{n_1^2}} \frac{A(n_1,n_2)}{n_2^{2/3}}
  \sum_{m\geq 1}\frac{\lambda_f(m)}{m^{1/4}}
  \mathcal{C}(m,n_1,n_2,\ell,q)\mathcal{J}_\mathbf{a}(m,n_1,n_2,\ell,q),
\end{multline}
where
\begin{align*}
  \mathcal{J}_\mathbf{a}(m,n_1,n_2,\ell,q)=\int_{\mathbb{R}}\mathcal{V}(\xi)
  e\left(-\frac{t}{2\pi}\log \xi + \eta_1\frac{3(n_1^2n_2N\ell \xi)^{1/3}}{r^{1/3}Mq}+\eta_2\frac{2\sqrt{mN\xi}}{Mq}\right)\dd \xi,
\end{align*}
and
\begin{multline}\label{mathcal C}
  \mathcal{C}(m,n_1,n_2,\ell,q)=\; \sideset{}{^\star}\sum_{b\bmod M}\,\sideset{}{^\star}\sum_{a \bmod q } S(r\overline{(aM+bq)},\eta_1n_2,qMr/n_1)
  \\
  \cdot \sum_{\substack{u \bmod M \\ u\neq b}}\bar{\chi}(u)
  e\left(\frac{m\overline{\ell(aM+(b-u)q)}}{Mq}\right).
\end{multline}
By partial integration, one can truncate the $m$-sum at $m\ll \max\{\frac{t^2R^2M^2}{N}, \frac{NL^2X^2}{Q^2}\}$.
We have
\[
  \mathcal{C}(m,n_1,n_2,\ell,q)=\sideset{}{^\star}\sum_{\alpha \bmod qMr/n_1 }f(\alpha,m\bar{\ell},q)
  \tilde{S}(\alpha,m\bar{\ell},q)e\left(\eta_1\frac{\bar{\alpha}n_1n_2}{qMr}\right),
\]
where
\[
  \tilde{S}(\alpha,m,q)=\;\sideset{}{^\star}\sum_{b \bmod M } \sum_{\substack{u \bmod M \\u\neq b}}\bar{\chi}(u)e\left(\frac{\bar{q}^2(n_1\alpha\bar{b}+m\overline{(b-u})}{M}\right),
\]
and
\[
  f(\alpha,m,q)=\sum_{\substack{d|q\\ n_1\alpha\equiv-m(\bmod d)}}d\mu(q/d).
\]

$\mathbf{Case\ (b):}$
\begin{align}\label{case:b}
  \bm{\frac{NLX}{MRQ}\asymp \left(\frac{n_1^2n_2NL}{R^3M^3r}\right)^{1/3}\asymp t}, \quad
  \bm{\frac{mN}{M^2R^2}\ll t^\varepsilon.}
\end{align}
In this case, we replace $H^\pm(z)$ by $\mathcal{I}(z)$ as defined in \eqref{eqn:mathcal(I)}. Hence, we are led to estiamte
 \begin{align*}
  &\frac{N^{3/2-it}}{\tau(\bar{\chi})M^{5/2}LQr^{1/2}} \sum_{\ell\in\mathcal{L}}
  \overline{A(1,\ell)\chi(\ell)}\ell^{1/2}
  \; \sideset{}{^\star}\sum_{b\bmod M}
  \sum_{\substack{q\sim R \\ (q,\ell M)=1}}\frac{1}{q^{5/2}}
  \;\sideset{}{^\star}\sum_{a\bmod{q}}
  \;\sum_{\substack{u\bmod M \\u\neq b}}\bar{\chi}(u)
    \nonumber
  \\
  & \cdot
  \sum_{m\geq 1}
  \frac{\lambda_f(m)}{m^{1/4}}e\left(\frac{m\overline{\ell(aM+(b-u)q)}}{Mq}\right)
  \sum_{\eta_1=\pm1}\sum_{n_1|qMr}
  \sum_{n_2\asymp \frac{N_0}{n_1^2}} \frac{A(n_1,n_2)}{n_2^{1/2}}\nonumber
  \\
  & \cdot
  S(r\overline{(aM+bq)},\eta_1 n_2;qMr/n_1)\int_{\mathbb{R}}\xi^{-1/4}V(\xi)J_f\left(\frac{mN\xi}{M^2q^2}\right)
  e\left(-\frac{it\log \xi}{2\pi}{Mq}\right)\nonumber
  \\
  & \cdot
  \int_\mathbb{R}g(q,x)
  e\left(-\frac{N\ell x\xi}{MqQ} +\eta_1 \frac{2(n_1^2n_2Q)^{1/2}}{Mq((-\eta_1 rx))^{1/2}}\right)
  \mathcal{W}\left(\frac{Q^{3/2}(n_1^2n_2)^{1/2}}{ r^{1/2}(-\eta_1 x)^{3/2}N\ell}\right) U\left(\frac{-\eta_1 x}{X}\right)\dd x \dd \xi.
\end{align*}
By doing a similar treatment as in $\mathbf{Case\ (a)}$, one can equate the above with (up to a negligible error and another term with $M\mid n_1$)
\begin{multline}\label{eqn: re goal of case b}
  \frac{N^{4/3-it} X}{\tau(\bar{\chi})M^{2}LQr^{1/3}} \sum_{\ell\in\mathcal{L}}
  \overline{A(1,\ell)\chi(\ell)}\ell^{1/3}\sum_{\substack{q\sim R \\ (q,\ell M)=1}}\frac{1}{q^{2}}
  \\
   \cdot \sum_{\eta_1,\eta_2=\pm1}\sum_{n_1|qr} \frac{1}{n_1^{1/3}} \sum_{n_2\asymp \frac{N_0}{n_1^2}} \frac{A(n_1,n_2)}{n_2^{2/3}}
  \sum_{m\geq 1}\frac{\lambda_f(m)}{m^{1/4}}
  \mathcal{C}(m,n_1,n_2,\ell,q)\mathcal{J}_\mathbf{b}(m,n_1,n_2,\ell,q),
\end{multline}
where $\mathcal{C}$ is defined as in \eqref{mathcal C} and
\begin{align*}
  \mathcal{J}_\mathbf{b}(m,n_1,n_2,\ell,q)=\int_{\mathbb{R}}\xi^{-1/4}V(\xi)J_f\left(\frac{mN\xi}{M^2q^2}\right)
  e\left(-\frac{t}{2\pi}\log \xi + \eta_1\frac{3(n_1^2n_2N\ell \xi)^{1/3}}{r^{1/3}Mq}\right)\dd \xi.
\end{align*}

$\mathbf{Case\ (c):}$
\begin{align}\label{case:c}
  \bm{\frac{n_1^2n_2}{R^3M^3r}LN\ll t^\varepsilon},\quad
  \bm{\frac{NLX}{MRQ}\ll t^\varepsilon}, \quad
  \bm{\frac{mN}{M^2R^2}\gg t^\varepsilon}.
\end{align}

Since $\frac{NLX}{MRQ}\ll t^\varepsilon$, we first deal with the $\xi$-integral in \eqref{leading term}.
Making a change of variable $\xi \rightsquigarrow \xi^2$, we have
\[
   \mathcal{J}_\mathbf{c}(m,\ell,q) = 2 \int_{\mathbb{R}}\xi^{-1/2}V(\xi^2) e\left(-\frac{N\ell x\xi^2}{MqQ}\right)
   e\left(-\frac{t\log \xi}{\pi}+\eta_2\frac{2\sqrt{mN}}{Mq}\xi\right)\dd \xi.
\]
Let
\[h(\xi)=-\frac{t\log \xi}{\pi}+\eta_2\frac{2\sqrt{mN}}{Mq}\xi.\]
Then we have
\[h'(\xi) = -\frac{t}{\pi\xi}+\eta_2\frac{2\sqrt{mN}}{Mq}, \quad
h''(\xi)=\frac{t}{\pi\xi^2}, \quad h^{(j)}(\xi)\asymp_j t, \quad j\geq2. \]
Note that $\frac{t}{1+(NLX/MRQ)^2}\gg t^{1-\varepsilon}$.
Hence, by Lemma \ref{lemma:stationary_phase}, the integral is
negligibly small unless $\frac{mN}{M^2 R^2}\asymp t$ and $\eta_2=1$, in which case
we have the stationary phase point $\xi_0=\frac{tMq}{2\pi\sqrt{mN}}$ and
\[
  \mathcal{J}_\mathbf{c}(m,\ell,q) = \frac{1}{t^{1/2}} e\left( -\frac{t}{\pi}\log \frac{tMq}{2\pi e\sqrt{mN}} \right) V_{x,\ell}\left(\frac{tMq}{\sqrt{mN}}\right) + O(t^{-A}),
\]
where $V_{x,\ell}$ is a $t^\varepsilon$-inert function.

Together with \eqref{eqn: use voronoi gl3} and \eqref{leading term}, we have $S_{11}^{\pm}(N,X,R)$ is equal to (up to a negligibly small error term and another term with $u=b$)
\begin{align*}
 \begin{split}
    &  \frac{1}{L^*} \sum_{\ell\in\mathcal{L}} \overline{A(1,\ell)} \int_\mathbb{R}
  \frac{1}{M}  \; \sideset{}{^\star}\sum_{b\bmod M}
  \frac{1}{Q}\sum_{\substack{q\sim R \\ (q,\ell M)=1}}
  \frac{1}{q} \; \sideset{}{^\star}\sum_{a\bmod q} g(q,x)U\left(\frac{\pm x}{X}\right)
  qM \sum_{\eta_1=\pm 1}\sum_{n_1|qMr} \\
  & \hskip 15pt \cdot
  \sum_{n_2=1}^\infty \frac{A(n_1,n_2)}{n_1n_2} S(r\overline{(aM+bq)},\eta_1n_2;qMr/n_1)  \left(\frac{n_1^2n_2 \ell N}{q^3M^3r}\right)^{1/2} W_{x,\ell}^{\sgn(\eta_1)}\left(\frac{n_1^2n_2}{q^3M^3r}\right) \\
  & \hskip 15pt \cdot
  \frac{N^{3/4-it}}{M^{1/2}q^{1/2}\tau(\bar{\chi})}\sum_{\substack{u \bmod M \\u\neq b}}\bar{\chi}(u\ell) \sum_{m\geq 1}
  \frac{\lambda_f(m)}{m^{1/4}}e\left(\frac{m\overline{\ell(aM+(b-u)q)}}{Mq}\right)
  \\
  & \hskip 15pt \cdot \frac{1}{t^{1/2}} e\left( -\frac{t}{\pi}\log \frac{tMq}{2\pi e\sqrt{mN}} \right) V_{x,\ell}\left(\frac{tMq}{\sqrt{mN}}\right) \mathrm{d}x.
 \end{split}
\end{align*}
We assume $(n_1,M)=1$, since otherwise we have $M \mid n_1$ which leads to a simpler case. Rearranging the sums, inserting a dyadic partition for the $n_2$-sum, and estimating the $x$-integral trivially, the above is bounded by
\begin{align*}
 \begin{split}
    & N^\varepsilon \sup_{1\ll N_0 \ll \frac{R^3 M^3 r}{LN}t^\varepsilon} \sup_{x\asymp X}  \big| S_{11}^\pm (N,X,R,N_0) \big|,
 \end{split}
\end{align*}
where
\begin{align*}
  & S_{11}^\pm (N,X,R,N_0) = \frac{N^{5/4}X }{ M^{5/2}LQ r^{1/2}}  \frac{1}{t^{1/2}}
  \sum_{\eta_1=\pm 1} \sum_{n_1 \leq Rr}  \sum_{n_2 \asymp \frac{N_0}{n_1^2}}  \frac{A(n_1,n_2)}{ n_2^{1/2}}  \sum_{\ell\in\mathcal{L}} \overline{A(1,\ell)\chi(\ell)} \ell^{1/2}  \\
  & \hskip 30pt \cdot
 \sum_{\substack{q\sim R \\ n_1\mid qr \\ (q,\ell M)=1}} \frac{1}{q^{2+2it}}\sum_{m\asymp \frac{R^2M^2t^2}{N}}
  \frac{\lambda_f(m)}{m^{1/4-it}} \mathcal{C}(m,n_1,n_2,\ell,q)
  W_{x,\ell}^{\sgn(\eta_1)}\left(\frac{n_1^2n_2}{q^3M^3r}\right)
   V_{x,\ell}\left(\frac{tMq}{\sqrt{mN}}\right),
\end{align*}
and $\mathcal{C}$ is defined as in \eqref{mathcal C}.

\section{Applying Cauchy and Poisson}\label{sec:cauchy+poisson}
\subsection{Case a}
In this subsection, we assume $\mathbf{Case\ (a)}$ which was defined in \eqref{case:a}.
Write $q=q_1q_2$ with $q_1|(rn_1)^\infty$ and $(q_2,rn_1)=1$, then we have
\begin{multline*}
  \eqref{eqn: re goal of case a}\ll \frac{N^{13/12+\varepsilon}X}{M^{2}LQr^{1/3}}
  \sum_{\eta_1,\eta_2=\pm 1}
  \sum_{n_1\ll Rr }\frac{1}{n_1^{1/3}}\sum_{\frac{n_1}{(n_1,r)}|q_1|(rn_1)^\infty}\frac{1}{q_1^{3/2}}
  \sum_{n_2\asymp \frac{N_0}{n_1^2}}
  \frac{|A(n_1,n_2)|}{n_2^{2/3}}
  \\
  \cdot \bigg|\sum_{\substack{\ell\in\mathcal{L}\\ (\ell,q_1)=1}}\overline{A(1,\ell)\chi(\ell)}\ell^{1/3}
  \sum_{\substack{q_2\sim R/q_1\\ (q_2,rn_1\ell M)=1}}\frac{1}{q_2^{3/2}}\sum_{m\ll \max\{\frac{t^2R^2M^2}{N}, \frac{NL^2X^2}{Q^2}\}}
  \frac{\lambda_f(m)}{m^{1/4}} \\
  \cdot
  \mathcal{C}(m,n_1,n_2,\ell,q_1q_2)\mathcal{J}_\mathbf{a}(m,n_1,n_2,\ell,q_1q_2)\bigg|.
\end{multline*}
Now we use the Cauchy--Schwarz inequality and \eqref{eqn:RS3-2} to get
\begin{align}\label{eqn:led to estimate}
  \ll \frac{N^{3/4+\varepsilon}X^{1/2}}{M^{2}L^{4/3}Q^{1/2}r^{1/2}}
  \sum_{\eta_1,\eta_2=\pm 1}
  \sup_{M_1\ll \max\{\frac{t^2R^2M^2}{N}, \frac{NL^2X^2}{Q^2}\}}
  \sum_{n_1\ll Rr}n_1^{\theta_3} \sum_{\frac{n_1}{(n_1,r)}|q_1|n_1^\infty}\frac{1}{q_1^{3/2}}\Omega_\mathbf{a}^{1/2},
\end{align}
where
\begin{multline*}
  \Omega_\mathbf{a}=\sum_{n_2\asymp \frac{N_0}{n_1^2}}\bigg|\sum_{\substack{\ell\in\mathcal{L}\\ (\ell,q_1)=1}}\overline{A(1,\ell)\chi(\ell)}\ell^{1/3}\sum_{\substack{q_2\sim R/q_1\\ (q_2,rn_1\ell M)=1}}\frac{1}{q_2^{3/2}}\sum_{m\sim M_1}\frac{\lambda_f(m)}{m^{1/4}}
  \\ \cdot
  \mathcal{C}(m,n_1,n_2,\ell,q_1q_2)\mathcal{J}_\mathbf{a}(m,n_1,n_2,\ell,q_1q_2)\bigg|^2.
\end{multline*}
Opening the absolute square, we get
\begin{multline*}
  \Omega_\mathbf{a}\ll\sum_{n_2\geq 1}W\left(\frac{n_1^2n_2}{N_0}\right)
  \underset{(\ell\ell',q_1)=1}{\sum_{\ell\in\mathcal{L}} \sum_{\ell'\in\mathcal{L}}} \overline{A(1,\ell)\chi(\ell)}
  A(1,\ell')\chi(\ell')(\ell\ell')^{1/3}
  \\
  \cdot \sum_{m\sim M_1}\frac{\lambda_f(m)}{m^{1/4}}
  \sum_{m'\sim M_1}\frac{\lambda_f(m')}{m'^{1/4}}
  \underset{(q_2q_2',rn_1M)=1}{\sum_{\substack{q_2\sim R/q_1\\(q_2,\ell)=1}}\sum_{\substack{q_2'\sim R/q_1\\(q_2',\ell')=1}}}\frac{1}{(q_2q_2')^{3/2}}
   \\
  \cdot \mathcal{C}(m,n_1,n_2,\ell,q_1q_2)\mathcal{J}_\mathbf{a}(m,n_1,n_2,\ell,q_1q_2)
  \overline{\mathcal{C}(m',n_1,n_2,\ell',q_1q_2')}\overline{\mathcal{J}_\mathbf{a}(m',n_1,n_2,\ell',q_1q_2')},
\end{multline*}
where $W$ is supported on $[1,2]$ and satisfies $W^{(j)}(x)\ll 1$.
We apply the Poisson summation formula on $n_2$, getting
\begin{align*}
  \Omega_\mathbf{a} \ll \frac{N_0q_1^{3}L^{2/3}}{n_1^2M_1^{1/2}R^{3}}
  \underset{(\ell\ell',q_1)=1}{\sum_{\ell\in\mathcal{L}}\sum_{\ell'\in\mathcal{L}}}
  |A(1,\ell)A(1,\ell')|   &
  \sum_{m\sim M_1}\sum_{m'\sim M_1}|\lambda_f(m)| |\lambda_f(m')| \\
  & \cdot \underset{(q_2q_2',rn_1M)=1}{\sum_{\substack{q_2\sim R/q_1\\(q_2,\ell)=1}}\sum_{\substack{q_2'\sim R/q_1\\(q_2',\ell')=1}}}
  \sum_{n_2\geq 1}|\mathfrak{C}(n_2)||\mathfrak{J}_\mathbf{a}(n_2)|,
\end{align*}
where
\begin{multline}\label{eqn: character sum case a}
  \mathfrak{C}(n_2)=\;\sideset{}{^\star}\sum_{b\bmod M }
  \;\sideset{}{^\star}\sum_{b'\bmod M }
  \Bigg(\sum_{\substack{u\bmod M \\ u\neq b}}\bar{\chi}(u)e\left(\frac{m\overline{q_1^2q_2^2\ell(b-u)}}{M}\right)\Bigg)
  \\ \cdot
  \Bigg(\sum_{\substack{u'\bmod M \\ u'\neq b'}}\chi(u')e\left(\frac{-m'\overline{q_1^2q_2'^2\ell'(b'-u')}}{M}\right)\Bigg) \Bigg(\sum_{d|q_1q_2}\sum_{d'|q_1q_2'}dd'\mu(q_1q_2/d)\mu(q_1q_2'/d')
  \\
  \cdot
  \sideset{}{^\star}
  \sum_{\alpha(\bmod Mrq_1q_2/n_1)}\sideset{}{^\star}\sum_{\substack{\alpha'(\bmod Mrq_1q_2'/n_1)\\q_2'\bar{\alpha}-q_2\bar{\alpha'}\equiv-\eta_1n_2(Mrq_1q_2q_2'/n_1)
  \\n_1\alpha\equiv-m\bar{\ell}(d)
  \\n_1\alpha'\equiv-m'\bar{\ell'}(d')}}
  e\left(\frac{n_1\alpha\overline{bq_1^2q_2^2}-n_1\alpha'\overline{b'q_1^2q_2'^2}}{M}\right)\Bigg),
\end{multline}
and
\begin{align*}
  \mathfrak{J}_\mathbf{a}(n_2)=\int_{\mathbb{R}}W(w)
  \mathcal{I}_\mathbf{a}(N_0w,m,q_2)
  \overline{\mathcal{I}_\mathbf{a}(N_0w,m',q_2')}
  e\left(-\frac{N_0n_2w}{q_1q_2q_2'Mn_1r}\right) \dd w
\end{align*}
with
\begin{align*}
  \mathcal{I}_\mathbf{a}(w,n,q_2)=\int_{\mathbb{R}}\mathcal{V}(\xi)
  e\left(-\frac{t}{2\pi}\log \xi + \eta_1\frac{3(wN\ell \xi)^{1/3}}{r^{1/3}Mq_1q_2}+\eta_2\frac{2\sqrt{mN\xi}}{Mq_1q_2}\right)\dd \xi.
\end{align*}

\subsubsection{$\bm{\frac{NLX}{MRQ}\ll t^{1-\varepsilon}}$}
We first consider
$\mathcal{I}(N_0w,m,q_2)$.
Let
\begin{align}\label{h(xi)}
  g(\xi)=-\frac{t}{2\pi}\log \xi+\eta_1\frac{3(N_0wN\ell \xi)^{1/3}}{r^{1/3}Mq_1q_2}+\eta_2\frac{2\sqrt{mN\xi}}{Mq_1q_2}.
\end{align}
There exists a stationary phase point $\xi_*$ if and only if $m\asymp\frac{t^2M^2R^2}{N}$
and $\eta_2=1$, in which case
$\xi_*$ can be written as $\xi_0+\xi_1+\xi_2+\cdots$ with
\begin{align*}
  & \xi_0=\frac{t^2M^2q_1^2q_2^2}{4\pi^2 mN}=\left(\frac{t}{\pi C}\right)^2\asymp1,
  \\
  &\xi_1=-\eta_1\frac{4\pi Bw^{1/3}}{3t}\xi_0^{4/3}
  \asymp \frac{B}{t},
  \\
  &\xi_2=\frac{28\pi^2 B^2w^{2/3}}{27t^2}\xi_0^{5/3}
  \asymp \frac{B^2}{t^2},
  \\
  & \xi_i=f_i(t,C)\left(\eta_1\frac{Bw^{1/3}}{t}\right)^i\ll \left(\frac{B}{t}\right)^i, \quad i\geq3,
\end{align*}
where $B=\frac{3(N_0N\ell)^{1/3}}{r^{1/3}Mq_1q_2}\asymp\frac{NLX}{MRQ}$, $C=\frac{2\sqrt{mN}}{Mq_1q_2}$
and $f_i(t,C)\asymp1$ is a function.
Recall  that $\mathcal{V}(\xi)=\xi^{-5/12}V(\xi)g(q,-\eta_1Xv_0)\mathcal{U}(v_0)W(\frac{Q^{3/2}(n_1^2n_2)^{1/2}}{ r^{1/2}(Xv_0)^{3/2}N\ell})$, $v_0 = \frac{(n_1^2n_2)^{1/3}Q}{r^{1/3}(N\ell\xi)^{2/3}X}\asymp1$
and \eqref{eqn:good property for g}. So it is easy to check the conditions in Lemma \ref{lemma:stationary_phase}. By using this lemma together with the Taylor expansion,
$\mathcal{I}_\mathbf{a}(N_0w,m,q_2)$ is essentially reduced to
\begin{align}\label{xi stationary}
  \frac{1}{t^{1/2}}\xi_0^{-it}e\left(Bw^{1/3}g_1(C)+B^2w^{2/3}g_2(C)+O\left(\frac{|B|^3}{t^2}\right)\right),
\end{align}
where $g_1(C)=\eta_1\xi_0^{1/3}=\eta_1\frac{t^{2/3}}{(\pi C)^{2/3}}\asymp 1$ and $g_2(C)= -\frac{4\pi }{9t}\xi_0^{2/3}\ll \frac{1}{t}$.
To estimate $\mathfrak{J}_a(n_2)$, we use the strategy in \cite[Lemma 4.3]{LinSun}
and \cite[Lemma 5]{Munshi2018} to get the following result.
\begin{lemma}\label{lemma:J}
Let $N_2=\frac{Q^2Rn_1}{NLX^2q_1}t^\varepsilon$ and
$N_2'=t^\varepsilon(\frac{NLn_1}{M^2Rt^2q_1}+\frac{R^2Q^3Mn_1}{N^2L^2X^3q_1})$. Assume $\frac{NLX}{MRQ}\ll t^{1-\varepsilon}$.
  \begin{itemize}
    \item [(i)]  We have
    $\mathfrak{J}_\mathbf{a}(n_2) \ll t^{-A}$ unless $n_2\ll N_2$, in which case one has
    \begin{align}\label{eqn:J-i}
      \mathfrak{J}_a(n_2) \ll \frac{1}{t^{1-\varepsilon}}.
    \end{align}
    \item [(ii)] If $N_2'\ll n_2\ll N_2$, we have
    \begin{align}\label{eqn:J-ii}
      \mathfrak{J}_\mathbf{a}(n_2) \ll \frac{RQ^{3/2}M^{1/2}n_1^{1/2}}{t^{1-\varepsilon}NLX^{3/2}q_1^{1/2}n_2^{1/2}}.
    \end{align}
    \item [(iii)] If $q_2=q_2'$, we have $\mathfrak{J}_\mathbf{a}(0)\ll t^{-A}$ unless $\ell m'-\ell'm\ll t^\varepsilon\left(\frac{M_1N^2L^3X^2}{M^2R^2Q^2t^2}+\frac{M_1MRQ}{NX}\right)$.
  \end{itemize}
\end{lemma}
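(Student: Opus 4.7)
My plan is to substitute the asymptotic~\eqref{xi stationary} for both $\mathcal{I}_\mathbf{a}(N_0 w, m, q_2)$ and $\overline{\mathcal{I}_\mathbf{a}(N_0 w, m', q_2')}$ into the definition of $\mathfrak{J}_\mathbf{a}(n_2)$. The two $w$-independent pieces $\xi_0^{-it}$ and $(\xi_0')^{it}$ factor out of the integral, leaving
\[
  \mathfrak{J}_\mathbf{a}(n_2) = O(t^{-1+\varepsilon}) \int_\mathbb{R} W(w)\, e(\Phi(w))\,\mathrm{d}w + O(t^{-A}),
\]
where, abbreviating $A_1 := Bg_1(C) - B'g_1(C')$ and $A_2 := B^2 g_2(C) - B'^2 g_2(C')$,
\[
  \Phi(w) = A_1 w^{1/3} + A_2 w^{2/3} - \frac{N_0 n_2}{q_1 q_2 q_2' M n_1 r}\, w + O\!\left(\frac{B^3}{t^2}\right),
\]
with $B = 3(N_0 N\ell)^{1/3}/(r^{1/3}Mq_1q_2)$, $C = 2\sqrt{mN}/(Mq_1q_2)$, $g_1(C) = \eta_1(t/(\pi C))^{2/3}$, $g_2(C) = -\tfrac{4\pi}{9t}(t/(\pi C))^{4/3}$, and primed analogues for $(\ell', m', q_2')$. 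The whole argument then reduces to analysing the first two derivatives of $\Phi$ on $w \asymp 1$.

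For (i), since $|A_1|, |A_2| \ll NLX/(MRQ)$ uniformly on the support of $W$, the first two summands of $\Phi'(w)$ contribute at most $O(NLX/(MRQ))$, whereas the last contributes $\asymp N_0 n_2/(q_1 q_2 q_2' M n_1 r)$. Balancing the two isolates the threshold $N_2$: for $n_2 \gg N_2$ the $n_2$-term dominates and Lemma~\ref{lemma:stationary_phase}(i) produces the arbitrary $t^{-A}$ saving. For $n_2 \ll N_2$, the trivial pointwise bound $|\mathcal{I}_\mathbf{a}| \ll t^{-1/2+\varepsilon}$ from~\eqref{xi stationary} (used twice) together with $\int|W(w)|\,\mathrm{d}w = O(1)$ delivers $|\mathfrak{J}_\mathbf{a}(n_2)| \ll t^{-1+\varepsilon}$.

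For (ii), in the range $N_2' \ll n_2 \ll N_2$ there is a unique stationary point $w_0 \in \supp W$ at which the $A_1$-term of $\Phi'$ balances the $n_2$-frequency; the cutoff $N_2'$ is precisely the scale below which the $A_2$-term can no longer be treated as an error in this balance. At $w_0$ one then computes
\[
  |\Phi''(w_0)| \asymp |A_1| \asymp \frac{N_0 n_2}{q_1 q_2 q_2' M n_1 r} \asymp \frac{N^2 L^2 X^3 q_1 n_2}{Q^3 R^2 M n_1},
\]
and Lemma~\ref{lemma:stationary_phase}(ii) yields an oscillatory integral of size $|\Phi''(w_0)|^{-1/2}$; combining with the $t^{-1+\varepsilon}$ prefactor and simplifying gives the bound asserted in (ii).

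For (iii), with $n_2 = 0$ and $q_2 = q_2'$, the quantities $B, B'$ differ only through $\ell, \ell'$ and $C, C'$ only through $m, m'$. Applying the mean value theorem to $\ell^{j/3}/m^{j/3}$ for $j=1,2$ gives
\[
  |A_1| \asymp \frac{t^{2/3} N^{2/3} X\,|\ell m' - \ell' m|}{Q(MR)^{1/3} M_1^{4/3}}, \qquad |A_2| \asymp \frac{t^{1/3} N^{4/3} L X^2\,|\ell m' - \ell' m|}{Q^2 (MR)^{2/3} M_1^{5/3}}.
\]
Substituting $M_1 \asymp t^2 M^2 R^2/N$, the conditions $|A_1| \ll t^\varepsilon$ and $|A_2| \ll t^\varepsilon$ translate, respectively, to $|\ell m' - \ell' m|$ being below the second and first summands on the right-hand side. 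If $|\ell m' - \ell' m|$ exceeds the sum, then in particular $|A_1| \gg t^\varepsilon$; moreover, in the sub-case $NLX/(MRQ) \ll t^{1-\varepsilon}$ one has $|A_1|/|A_2| \asymp t/B \gg t^\varepsilon$, so the candidate cancellation between $A_1 w^{1/3}$ and $A_2 w^{2/3}$ cannot occur anywhere in $\supp W$. Hence $|\Phi'(w)| \gg t^\varepsilon$ uniformly, and Lemma~\ref{lemma:stationary_phase}(i) delivers $\mathfrak{J}_\mathbf{a}(0) \ll t^{-A}$. The main technical obstacle across the three parts will be bookkeeping: identifying which summand of $\Phi'$ or $\Phi''$ dominates in each regime and ensuring that the higher-order error $O(B^3/t^2)$ inherited from~\eqref{xi stationary} remains harmless throughout; none of these steps is deep, but the proliferation of parameters $N, L, M, R, Q, X, t, n_1, q_1$ makes arithmetic slips easy.
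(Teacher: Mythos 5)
Your overall strategy matches the paper's: substitute the stationary phase asymptotic \eqref{xi stationary} into both factors of $\mathfrak{J}_\mathbf{a}$, and then analyse the resulting exponential integral in $w$ by first- and second-derivative tests. (The paper additionally changes variables $w=u^3$ so the leading terms of the phase become a polynomial in $u$, but this is cosmetic.) Parts (i) and (ii) of your argument are in essence correct and reproduce the paper's bounds, including the matching $|\Phi''(w_0)|\asymp |A_1|\asymp N_0 n_2/(q_1q_2q_2'Mn_1r)$.

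Part (iii), however, contains a genuine gap. You claim that the two summands of the stated window for $|\ell m'-\ell' m|$ come respectively from $|A_1|\ll t^\varepsilon$ and $|A_2|\ll t^\varepsilon$. The second summand $M_1 MRQ/(NX)$ does indeed correspond to $|A_1|\ll t^\varepsilon$, but $|A_2|\ll t^\varepsilon$ does \emph{not} give the first summand. Since $g_2(C)=-\tfrac{4\pi}{9t}\xi_0^{2/3}$, one has $|A_2|=\tfrac{4\pi}{9t}(B\xi_0^{1/3}+B'\xi_0'^{1/3})\,|A_1|\asymp \tfrac{B}{t}|A_1|$, so $|A_2|\ll t^\varepsilon$ is \emph{weaker} than $|A_1|\ll t^\varepsilon$ (as $B\ll t^{1-\varepsilon}$) and, using $M_1\asymp t^2M^2R^2/N$, translates to $|\ell m'-\ell' m|\ll t^\varepsilon Q^2(MR)^4 t^3/(N^3LX^2)$, which equals the first summand $M_1N^2L^3X^2/(M^2R^2Q^2t^2)=NL^3X^2/Q^2$ only when $(NLX/(MRQ))^4\asymp t^3$. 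So your identification fails. The first summand actually arises from the $O(B^3/t^2)$ tail in the phase of \eqref{xi stationary}: this tail is a genuine function of $w$ (from the Taylor expansion $\xi_*=\xi_0+\xi_1+\cdots$, each $\xi_i$ depends on $w^{1/3}$) with $w$-derivative of size up to $B^3/t^2$, and when $B\gg t^{2/3}$ this can exceed $t^\varepsilon$ and cancel against $A_1 w^{1/3}$ even when $|A_1|\gg t^\varepsilon$. That is why the paper's threshold is $(m'\ell)^{1/3}-(m\ell')^{1/3}\ll (\tfrac{B^3}{t^2}+1)\tfrac{(M_1L)^{1/3}t^\varepsilon}{B}$, which upon expansion gives exactly the two-summand window. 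You remark at the end that the $O(B^3/t^2)$ error must be checked to be harmless, but in part (iii) you never carry out this check, and in fact it is \emph{not} harmless: your argument would prove the stronger (and false, for $B\gg t^{2/3}$) assertion that $\mathfrak{J}_\mathbf{a}(0)\ll t^{-A}$ already when $|\ell m'-\ell' m|\gg t^\varepsilon M_1MRQ/(NX)$, ignoring the range in which the error term alone makes the phase derivative small.
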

\begin{proof}
    Let $w=u^3$. Then we may equate the $w$-integral in $\mathfrak{J}_\mathbf{a}$ with
  \begin{multline*}
    \int_{\mathbb{R}}W(u^3)u^2
    \\
   \cdot e\left(-\frac{N_0n_2u^3}{q_1q_2q_2'Mn_1r}
   +(Bg_1(C)-B'g_1(C'))u+(B^2g_2(C)-B'^2g_2(C'))u^2+O\left(\frac{B^3}{t^2}\right)\right)\dd u,
  \end{multline*}
where $B'=\frac{3(N_0N\ell')^{1/3}}{r^{1/3}Mq_1q_2'}$, $C'=\frac{2\sqrt{m'N}}{Mq_1q_2'}$.
Applying integration by parts, the above integral is $\ll t^{-A}$ if $n_2\gg N_2$, which gives the
first result in (i).
The second result in (i) is obvious, since we may save $t^{1/2}$ in both $\mathcal{I}_\mathbf{a}(N_0w,m,q_2)$ and $\overline{\mathcal{I}_\mathbf{a}(N_0w,m',q_2')}$ according to \eqref{xi stationary}.

It is easy to see that
\begin{multline}\label{eqn:compare the co between u and u2}
   B^2g_2(C)-B'^2g_2(C')
   \ll \frac{B\xi_0^{1/3}+B'\xi_0'^{1/3}}{t}|B\xi_0^{1/3}-B'\xi_0'^{1/3}|
  \ll |Bg_1(C)-B'g_1(C')|t^{-\varepsilon},
\end{multline}
where we have used $\xi_0'=(\frac{t}{\pi C'})^2\asymp 1$ and $B$, $B'\asymp \frac{NLX}{MRQ}\ll t^{1-\varepsilon}$.
Therefore, if $N_2'\ll n_2\ll N_2$, the $u$-integral is $O(t^{-A})$ unless
$|Bg_1(C)-B'g_1(C')|\asymp \frac{N_0n_2}{q_1q_2q_2'Mn_1r}$.
By the second derivative test and \eqref{xi stationary}, we get
\eqref{eqn:J-ii}.

For $n_2=0$ and $q_2=q_2'$, we may rewrite the above $u$-integral as
\begin{align*}
  \int_{\mathbb{R}}W(u^3)u^2
    e\left((Bg_1(C)-B'g_1(C'))u+(B^2g_2(C)-B'^2g_2(C'))u^2+O\left(\frac{B^3}{t^2}\right)\right)\dd u.
\end{align*}
Notice that $\frac{Bg_1(C)}{(m'\ell)^{1/3}}=\frac{B'g_1(C')}{(m\ell')^{1/3}}$,
and $Bg_1(C)-B'g_1(C')=\frac{Bg_1(C)}{(m'\ell)^{1/3}}((m'\ell)^{1/3}-(m\ell')^{1/3})$.
So by partial integration and \eqref{eqn:compare the co between u and u2}, the $u$-integral is $O(t^{-A})$ unless
\begin{align*}
  (m'\ell)^{1/3}-(m\ell')^{1/3}\ll \left(\frac{B^3}{t^2}+1\right)\frac{(M_1L)^{1/3}t^\varepsilon}{B}.
\end{align*}
This actually proves the result in (iii).
%
%
\end{proof}

%
%

\subsubsection{$\bm{\frac{NLX}{MRQ}\gg t^{1-\varepsilon}}$}
It is easy to see that $R\ll \frac{N^{1+\varepsilon}LX}{MtQ}$.
We have the following Lemma \ref{lemma:J2}.
\begin{lemma}\label{lemma:J2}
Let $N_2$ be defined as in Lemma \ref{lemma:J}.
Then, if $\frac{NLX}{MRQ}\gg t^{1-\varepsilon}$, one has the following estimates.
  \begin{itemize}
    \item [(i)]  If $n_2\gg N_2$, we have
    $\mathfrak{J}_\mathbf{a}(n_2)\ll N^{-A}$.
    \item [(ii)] If $n_2\ll N_2$, we have
    \begin{align*}
      \mathfrak{J}_\mathbf{a}(n_2)\ll \frac{MRQ}{N^{1-\varepsilon}LX}.
    \end{align*}
    \end{itemize}
\end{lemma}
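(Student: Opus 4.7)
The plan is to exploit that in the regime $\frac{NLX}{MRQ}\gg t^{1-\varepsilon}$, the parameter $B:=\frac{3(N_0 N\ell)^{1/3}}{r^{1/3}Mq_1q_2}\asymp \frac{NLX}{MRQ}$ (and analogously $B'$ from the conjugate factor) dominates the other parameters $t$ and $C:=\frac{2\sqrt{mN}}{Mq_1q_2}$ appearing in the $\xi$-phase of $\mathcal{I}_\mathbf{a}$. Because of this, a direct application of Lemma \ref{lemma:stationary_phase} to each inner $\xi$-integral is available, simplifying the analysis compared to Lemma \ref{lemma:J}.

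First I would apply Lemma \ref{lemma:stationary_phase} to the $\xi$-integral in $\mathcal{I}_\mathbf{a}(N_0 w,m,q_2)$. Setting $h(\xi)=-\frac{t}{2\pi}\log\xi+\eta_1 B w^{1/3}\xi^{1/3}+\eta_2 C\xi^{1/2}$, the $\xi^{1/3}$ term dominates $h''$, so $|h''(\xi)|\asymp B$ on $\supp \mathcal{V}$ generically. Hence either $h'$ has no zero and $\mathcal{I}_\mathbf{a}\ll t^{-A}$, or there is a unique stationary point $\xi_\ast=\xi_\ast(w)$ and
\[
\mathcal{I}_\mathbf{a}(N_0 w,m,q_2)=\frac{e(h(\xi_\ast,w))}{\sqrt{h''(\xi_\ast)}}\,\mathcal{U}_\mathbf{a}(w)+O(t^{-A}),
\]
with $\mathcal{U}_\mathbf{a}$ a $t^\varepsilon$-inert function of $w$. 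In particular $|\mathcal{I}_\mathbf{a}|\ll B^{-1/2}$ uniformly. For part (ii), multiplying this bound by its conjugate and estimating the outer $w$-integral trivially against $|W(w)|$ gives $|\mathfrak{J}_\mathbf{a}(n_2)|\ll B^{-1}\asymp \frac{MRQ}{NLX}$, absorbing $t^\varepsilon$ into the exponent, as claimed.

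For part (i), I would insert the stationary phase expansion for $\mathcal{I}_\mathbf{a}$ and its conjugate into $\mathfrak{J}_\mathbf{a}(n_2)$, reducing the outer integral to one of the form $\int W(w)\mathcal{U}_\mathbf{a}(w)\overline{\mathcal{U}'_\mathbf{a}(w)}e(\Psi(w))\,\dd w$ with
\[
\Psi(w):=h(\xi_\ast(w),w)-\tilde h(\tilde\xi_\ast(w),w)-\frac{N_0 n_2 w}{q_1 q_2 q_2' Mn_1 r},
\]
where $\tilde h,\tilde\xi_\ast$ come from the conjugate factor. By the envelope theorem $\partial_w h(\xi_\ast(w),w)=\eta_1\frac{B}{3 w^{2/3}}\xi_\ast^{1/3}\asymp B$, and similarly for $\tilde h$; implicit differentiation of $\partial_\xi h(\xi_\ast,w)=0$ shows that $\xi_\ast'(w)=O(1)$, so higher $w$-derivatives of $\Psi$ are also $\ll B$. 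Plugging in $N_0=N^2L^2X^3r/Q^3$ and $q_2 q_2'\asymp(R/q_1)^2$, the hypothesis $n_2\gg N_2=t^\varepsilon Q^2 Rn_1/(NLX^2 q_1)$ gives
\[
\left|\frac{N_0 n_2}{q_1 q_2 q_2' Mn_1 r}\right|\gg \frac{NLX}{MRQ}\,t^\varepsilon\asymp B\,t^\varepsilon,
\]
so $|\Psi'(w)|\gg B\,t^\varepsilon$ on $\supp W$ while $|\Psi^{(j)}(w)|\ll B\,t^\varepsilon$ for all $j\geq 1$. Lemma \ref{lemma:stationary_phase}(i) then yields $\mathfrak{J}_\mathbf{a}(n_2)\ll N^{-A}$.

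The main obstacle is the $t^\varepsilon$-inertness of $\mathcal{U}_\mathbf{a}$ as a function of $w$, and the associated control of the $w$-derivatives of $\xi_\ast(w)$; both are needed to legitimise the outer integration by parts in (i). These are standard but somewhat fiddly checks based on the explicit form of $h$. One should also verify that sub-regions where $|h''|$ degenerates through cancellation between the $B$- and $C$-terms are confined to a thin parameter range whose contribution can be bounded separately; in the generic case the signs $\eta_1,\eta_2$ force $|h''|\asymp B$ as used above.
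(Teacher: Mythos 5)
Your proof takes a genuinely different route from the paper, and the route has a real gap that you flag but do not close. The paper's proof is much shorter precisely because it avoids stationary phase in $\xi$ altogether: for (ii) it applies Cauchy--Schwarz in $w$ and bounds the resulting second moment $\int W(w)|\mathcal{I}_\mathbf{a}(N_0w,\cdot)|^2\,\dd w$ by opening the square, integrating in $w$ first, and observing that the $w$-phase derivative is $\asymp B\,|\xi^{1/3}-\xi'^{1/3}|$, which forces $|\xi-\xi'|\ll B^{-1}t^\varepsilon$ and hence gives measure $\ll B^{-1}t^\varepsilon$ (this is the content of the cited \cite[Lemma~1]{Munshi2018}); for (i) it integrates by parts directly in $w$ inside the triple $(\xi,\xi',w)$-integral, where the $w$-phase derivative is $\asymp B$ from the two $\mathcal{I}_\mathbf{a}$ factors plus the linear term $\frac{N_0n_2}{q_1q_2q_2'Mn_1r}$, and $n_2\gg N_2$ makes the linear term dominate. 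Neither step requires any control of a stationary point $\xi_\ast$ or of $h''(\xi_\ast)$.

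The gap in your argument is the claimed uniform bound $|\mathcal{I}_\mathbf{a}|\ll B^{-1/2}$. To apply Lemma~\ref{lemma:stationary_phase}(ii) you need $|h''(\xi)|\gg B$ on the relevant range, but with the phase $h(\xi)=-\frac{t}{2\pi}\log\xi+\eta_1Bw^{1/3}\xi^{1/3}+\eta_2C\xi^{1/2}$ and $\xi_\ast\asymp 1$, a direct computation using $h'(\xi_\ast)=0$ gives
\[
h''(\xi_\ast)=\frac{t}{4\pi\xi_\ast^2}-\eta_1\,\frac{Bw^{1/3}}{18}\,\xi_\ast^{-5/3},
\]
independently of $\eta_2$ once you eliminate $C$ via the stationary-point equation. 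When $\eta_1=+1$ the two terms are of the same order as soon as $B\asymp t$ and can cancel, so $h''(\xi_\ast)$ can be arbitrarily small; the regime $\frac{NLX}{MRQ}\gg t^{1-\varepsilon}$ certainly allows $B\asymp t$. At such a degenerate point the integral $\mathcal{I}_\mathbf{a}$ is only $O(B^{-1/3})$ (cubic stationary phase), which after squaring gives $B^{-2/3}$, a factor $B^{1/3}\gg t^{1/3-\varepsilon}$ worse than the claimed $\frac{MRQ}{N^{1-\varepsilon}LX}\asymp B^{-1}$. Your closing remark that the degeneracy is ``confined to a thin parameter range'' does not help, because Lemma~\ref{lemma:J2} is a pointwise bound on $\mathfrak{J}_\mathbf{a}(n_2)$ that must hold for all admissible $(m,m',\ell,\ell',q_2,q_2',w)$; the thin set of degenerate tuples is not excluded. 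The same degeneracy undermines your treatment of (i): the envelope computation $\partial_w h(\xi_\ast(w),w)\asymp B$ and the claim $\xi_\ast'(w)=O(1)$ both rely on $h''(\xi_\ast)$ being bounded below via the implicit function theorem. To repair the argument, replace the pointwise stationary phase on $\mathcal{I}_\mathbf{a}$ by the second-moment argument in $w$ for (ii) and by direct integration by parts in $w$ (before touching the $\xi$-integral) for (i), exactly as the paper does.
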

\begin{proof}
  The first result can be done by applying integration by parts with respect to the $w$-integral.
  For $n_2\ll N_2$, we can use the arguments as in \cite[Lemma 1]{Munshi2018} to see
  \begin{align*}
    \int_{\mathbb{R}}W(w)|\mathcal{I}_\mathbf{a}(N_0w,n,\ell,q_2)|^2 \dd w \ll \frac{MRQ}{N^{1-\varepsilon}LX},
  \end{align*}
  which implies (ii).
 \end{proof}
\begin{remark}
 In the case of $\frac{NLX}{MRQ}\gg t^{1+\varepsilon}$, we remark that one may replace it by a more explicit version like Lemma \ref{lemma:J}.
 However, the present result is enough for our purpose.
\end{remark}
\subsection{Case b}
After a similar treatment, and noting that $m\ll \frac{M^2R^2t^\varepsilon}{N}$, we have
\begin{align}\label{eqn:led to estimate case b}
  \eqref{eqn: re goal of case b}\ll \frac{N^{1+\varepsilon}X^{1/2}}{M^{5/2}L^{4/3}Q^{1/2}r^{1/2}}
  \sum_{\eta_1=\pm 1}
  \sup_{M_1\ll \frac{M^2R^2t^\varepsilon}{N}}
  \sum_{n_1\ll Rr}n_1^{\theta_3} \sum_{\frac{n_1}{(n_1,r)}|q_1|n_1^\infty}\frac{1}{q_1^{2}}\Omega_\mathbf{b}^{1/2},
\end{align}
where
\begin{multline*}
  \Omega_\mathbf{b}\ll \frac{N_0q_1^{4}L^{2/3}}{n_1^2M_1^{1/2}R^{4}}
  \underset{(\ell\ell',q_1)=1}{\sum_{\ell\in\mathcal{L}}\sum_{\ell'\in\mathcal{L}}}
  |A(1,\ell)A(1,\ell')|
  \sum_{m\sim M_1}\sum_{m'\sim M_1}|\lambda_f(m)||\lambda_f(m')|
  \\ \cdot \underset{(q_2q_2',n_1 M)=1}{\sum_{\substack{q_2\sim R/q_1\\(q_2,\ell)=1}}\sum_{\substack{q_2'\sim R/q_1\\(q_2',\ell')=1}}}
  \sum_{n_2\geq 1}|\mathfrak{C}(n_2)||\mathfrak{J}_\mathbf{b}(n_2)|,
\end{multline*}
with $\mathfrak{C}(n_2)$ defined as in \eqref{eqn: character sum case a} and
\begin{align*}
  \mathfrak{J}_\mathbf{b}(n_2)=\int_{\mathbb{R}}W(w)
  \mathcal{I}_\mathbf{b}(N_0w,m,q_2)\overline{\mathcal{I}_\mathbf{b}(N_0w,m',q_2')}
  e\left(-\frac{N_0n_2w}{q_1q_2q_2'Mn_1r}\right) \dd w,
\end{align*}
\begin{align*}
  \mathcal{I}_\mathbf{b}(w,n,q_2)=\int_{\mathbb{R}}\xi^{-1/4}V(\xi)J_f\left(\frac{mN\xi}{M^2q^2}\right)
  e\left(-\frac{t}{2\pi}\log \xi + \eta_1\frac{3(wN\ell \xi)^{1/3}}{r^{1/3}Mq_1q_2}\right)\dd \xi.
\end{align*}
By the exactly treatment, we have the following lemma.
\begin{lemma}\label{lemma:J3}
The results in Lemma \ref{lemma:J2} hold when replacing $\mathfrak{J}_\mathbf{a}$
by $\mathfrak{J}_\mathbf{b}$.
\end{lemma}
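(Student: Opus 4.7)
The plan is to mirror the proof of Lemma \ref{lemma:J2} verbatim, modifying only the places where $\mathcal{I}_\mathbf{b}$ differs from $\mathcal{I}_\mathbf{a}$. The structural difference is that $\mathcal{I}_\mathbf{b}$ drops the stationary-phase--creating term $\eta_2 \frac{2\sqrt{mN\xi}}{Mq_1q_2}$ and, in its place, carries the Bessel factor $J_f\left(\frac{mN\xi}{M^2q^2}\right)$. Since we are operating under the Case (b) constraint $\frac{mN}{M^2R^2}\ll t^\varepsilon$, the Bessel argument is uniformly bounded on the support of $V$, and \eqref{Bessel for small y} guarantees that all $\xi$-derivatives of $J_f\!\left(\frac{mN\xi}{M^2q^2}\right)$ are of size $\ll 1$. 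The Bessel factor therefore absorbs cleanly into the weight function as a $t^\varepsilon$-inert multiplier and plays no further role in the oscillatory analysis.

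For part (i), I would integrate by parts in the outer $w$-variable, exactly as in Lemma \ref{lemma:J2}(i). The total $w$-phase is the linear oscillator $-\frac{N_0 n_2 w}{q_1q_2q_2'Mn_1 r}$ combined with the $w$-dependent piece $\eta_1 \frac{3(wN\ell\xi)^{1/3}}{r^{1/3}Mq_1q_2}$ inside each factor of $\mathcal{I}_\mathbf{b}$. In the regime of Case (b) we have $\frac{(N_0 N\ell)^{1/3}}{r^{1/3}Mq_1q_2}\asymp\frac{NLX}{MRQ}\asymp t$, so the $w$-derivative of the internal phase is $O(t)$ while the linear term contributes $\frac{N_0 n_2}{q_1q_2q_2'Mn_1r}$. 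As soon as $n_2\gg N_2$, the linear term dominates and repeated integration by parts yields an arbitrary negative power of $N$, establishing (i).

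For part (ii), I would apply Cauchy--Schwarz inside the $w$-integral to bound
\[
  |\mathfrak{J}_\mathbf{b}(n_2)| \ll \int_\mathbb{R} W(w)\,|\mathcal{I}_\mathbf{b}(N_0 w,m,q_2)|^2\, dw,
\]
open the square, interchange the $w$ and double $\xi$-integrations, and exploit the linearity of the $w$-dependence in $y=w^{1/3}$ to produce a delta-like mass near the diagonal $\xi\approx\xi'$. The remaining $\xi$-integral is then estimated by the stationary phase analysis of $-\frac{t}{2\pi}\log\xi + \eta_1\frac{3(N_0 w N\ell\xi)^{1/3}}{r^{1/3}Mq_1q_2}$, which in the Case (b) regime $\frac{NLX}{MRQ}\asymp t$ gives the $t^{-1/2}$-type saving and reproduces the argument of \cite[Lemma 1]{Munshi2018}. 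The resulting bound $\frac{MRQ}{N^{1-\varepsilon}LX}$ is identical to that of Lemma \ref{lemma:J2}(ii).

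The only potential obstacle is checking that absorbing $J_f$ into the weight is legitimate, since $J_f$ is oscillatory for large argument. Under the Case (b) hypothesis the argument $\frac{mN\xi}{M^2q^2}$ is at most $t^\varepsilon$, so \eqref{Bessel for small y} delivers the required uniform smoothness bounds and no new oscillation is introduced. Hence every step of the proof of Lemma \ref{lemma:J2} transfers without modification, and Lemma \ref{lemma:J3} follows.
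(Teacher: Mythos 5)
Your proposal is correct and takes exactly the paper's (tacit) route: the paper dispenses with the proof by declaring it the ``same treatment'' as Lemma~\ref{lemma:J2}, and you have supplied the one observation that makes this transfer legitimate, namely that under the Case~(b) constraint $\frac{mN}{M^2R^2}\ll t^\varepsilon$ the argument of $J_f$ is small, so by \eqref{Bessel for small y} the Bessel factor is a $t^\varepsilon$-inert multiplier and contributes no new oscillation. One small imprecision worth noting (it does not affect the conclusion): in part (ii) the Cauchy--Schwarz step should produce the geometric mean $\bigl(\int W|\mathcal{I}_\mathbf{b}(\cdot,m,q_2)|^2\bigr)^{1/2}\bigl(\int W|\mathcal{I}_\mathbf{b}(\cdot,m',q_2')|^2\bigr)^{1/2}$, and in Munshi's Lemma~1 argument the factor $\frac{MRQ}{NLX}$ comes from the diagonal restriction $|\xi-\xi'|\ll MRQ/(NLX)$ forced by the $y=w^{1/3}$ integral, after which the remaining $\xi$-integral is bounded trivially rather than by a further stationary-phase step; in the Case~(b) regime $\frac{NLX}{MRQ}\asymp t$ your alternative ``$t^{-1/2}$ per factor'' reading yields the identical bound, so either phrasing closes the argument.
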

\subsection{Case c}
After a similar treatment, we have
\begin{align*}
  & S_{11}^\pm (N,X,R,N_0) \leq \frac{N^{5/4}X }{ M^{5/2}LQ r^{1/2}}  \frac{1}{t^{1/2}}
  \sum_{\eta_1=\pm 1} \sum_{n_1 \leq Rr}
   \sum_{\frac{n_1}{(n_1,r)}\mid q_1\mid (rn_1)^\infty} \frac{1}{q_1^2}
     \\
  & \hskip 30pt \cdot
  \sum_{n_2 \asymp \frac{N_0}{n_1^2}}  \frac{|A(n_1,n_2)|}{ n_2^{1/2}}
  \bigg| \sum_{\ell\in\mathcal{L}} \overline{A(1,\ell)\chi(\ell)} \ell^{1/2}
  \sum_{\substack{q_2\sim R/q_1 \\  (q_2,\ell M rn_1)=1}} \frac{1}{q_2^{2+2it}}
  \sum_{m\asymp \frac{R^2M^2t^2}{N}}
  \frac{\lambda_f(m)}{m^{1/4-it}} \\
  & \hskip 30pt \cdot \mathcal{C}(m,n_1,n_2,\ell,q)
  W_{x,\ell}^{\sgn(\eta_1)}\left(\frac{n_1^2n_2}{q^3M^3r}\right)
   V_{x,\ell}\left(\frac{tMq}{\sqrt{mN}}\right) \bigg|.
\end{align*}
By Cauchy--Schwarz inequality and \eqref{eqn:RS3-2} we have
\begin{align}\label{eqn:led to estimate c}
  & S_{11}^\pm (N,X,R,N_0) \ll \frac{N^{5/4+\varepsilon}X }{ M^{5/2}LQ r^{1/2}}  \frac{1}{t^{1/2}}
  \sum_{\eta_1=\pm 1} \sum_{n_1 \leq Rr} n_1^{\theta_3}
   \sum_{\frac{n_1}{(n_1,r)}\mid q_1\mid (rn_1)^\infty} \frac{1}{q_1^2}
   \Omega_\mathbf{c} ^{1/2},
\end{align}
where
\begin{align*}
  \Omega_\mathbf{c} & = \sum_{n_2 \asymp \frac{N_0}{n_1^2}}
  \bigg| \sum_{\ell\in\mathcal{L}} \overline{A(1,\ell)\chi(\ell)} \ell^{1/2}
  \sum_{\substack{q_2\sim R/q_1 \\  (q_2,\ell M rn_1)=1}} \frac{1}{q_2^{2+2it}}
  \sum_{m\asymp \frac{R^2M^2t^2}{N}}
  \frac{\lambda_f(m)}{m^{1/4-it}} \\
  & \hskip 90pt \cdot \mathcal{C}(m,n_1,n_2,\ell,q)
  W_{x,\ell}^{\sgn(\eta_1)}\left(\frac{n_1^2n_2}{q^3M^3r}\right)
   V_{x,\ell}\left(\frac{tMq}{\sqrt{mN}}\right) \bigg|^2 .
\end{align*}
Opening the square we get
\begin{align*}
  \Omega_\mathbf{c} & \ll L
   \sum_{\ell\in\mathcal{L}}  |A(1,\ell)|
  \sum_{\substack{q_2\sim R/q_1 \\  (q_2,\ell M rn_1)=1}} \frac{1}{q_2^{2}}
  \sum_{m\asymp \frac{R^2M^2t^2}{N}}
  \frac{|\lambda_f(m)|}{m^{1/4}} \\
  & \hskip 30pt \cdot
  \sum_{\ell'\in\mathcal{L}} |A(1,\ell')|
  \sum_{\substack{q_2'\sim R/q_1 \\  (q_2',\ell' M rn_1)=1}} \frac{1}{q_2'^{2}}
  \sum_{m'\asymp \frac{R^2M^2t^2}{N}}
  \frac{|\lambda_f(m')|}{m'^{1/4}}   \\
  & \hskip 30pt \cdot \bigg| \sum_{n_2 \geq1} W\left(\frac{n_1^2n_2}{N_0}\right)
  \mathcal{C}(m,n_1,n_2,\ell,q)
   \overline{\mathcal{C}(m',n_1,n_2,\ell',q')}
    \bigg| ,
\end{align*}
 where $W\left(\frac{n_1^2n_2}{N_0}\right)$ is a smooth compactly supported function which contains the weight function $W_{x,\ell}^{\sgn(\eta_1)}\left(\frac{n_1^2n_2}{q^3M^3r}\right)
  \overline{W_{x,\ell}^{\sgn(\eta_1)}\left(\frac{n_1^2n_2}{q^3M^3r}\right)}$.
  Note that by \eqref{eqn:W(z)} we have
\[ \frac{\partial^j}{\partial n_2^j} W\left(\frac{n_1^2n_2}{N_0}\right) \ll_j t^\varepsilon n_2^{-j}, \quad j\geq0. \]
By the Poisson summation formula modulo $Mrq_1q_2q_2'/n_1$ we get
\begin{align*}
  \Omega_\mathbf{c} & \ll \frac{N_0}{n_1^2}  \frac{L q_1^4}{R^4} \frac{N^{1/2}}{RMt}
   \sum_{\ell\in\mathcal{L}}  |A(1,\ell)|
  \sum_{\substack{q_2\sim R/q_1 \\  (q_2,\ell M rn_1)=1}}
  \sum_{m\asymp \frac{R^2M^2t^2}{N}}
    \\
  & \hskip 30pt \cdot
  \sum_{\ell'\in\mathcal{L}}|A(1,\ell')|
  \sum_{\substack{q_2'\sim R/q_1 \\  (q_2',\ell' M rn_1)=1}}
  \sum_{m'\asymp \frac{R^2M^2t^2}{N}} |\lambda_f(m')|^2  \sum_{n_2\in\mathbb{Z}}
  |\mathfrak{C}(n_2)| |\mathfrak{I}_\mathbf{c}(n_2)|,
\end{align*}
where $\mathfrak{C}(n_2)$ is defined as in \eqref{eqn: character sum case a}
and
\begin{align*}
  \mathfrak{I}_\mathbf{c}(n_2) &  = \frac{n_1^2} {N_0}\int_{\mathbb{R}}  W\left(\frac{n_1^2 u}{N_0}\right) e\left(-\frac{un_2}{Mrq_1q_2q_2'/n_1}\right) \dd u
  = \int_{\mathbb{R}}  W\left(\xi\right) e\left(-\frac{N_0 n_2 \xi}{Mrq_1q_2q_2' n_1}\right) \dd \xi.
\end{align*}
By repeated integration by parts we have
\begin{equation}\label{eqn:I-c}
  \mathfrak{I}_\mathbf{c}(n_2) \ll \left\{ \begin{array}{ll}
                                  t^{-A}, & \textrm{if $n_2 \gg \frac{MrR^2 n_1}{q_1 N_0}t^\varepsilon$,} \\
                                  t^\varepsilon, & \textrm{if $n_2 \ll \frac{MrR^2 n_1}{q_1 N_0}t^\varepsilon$.}
                                \end{array} \right.
\end{equation}

\section{The zero frequency}\label{sec:zero-freq}

In this section we estimate the contribution from the terms with $n_2=0$.
Denote the contribution of this part to $\Omega_*$ by $\Omega_{0}$, where $*\in\{\mathbf{a},\mathbf{b},\mathbf{c}\}$.
Note that $q_2'\bar{\alpha}-q_2\bar{\alpha'}\equiv 0 \; (\bmod \; Mq_2q_2')$.
So we have $q_2'=(q_2'\bar{\alpha},Mq_2q_2')=(q_2\bar{\alpha'},Mq_2q_2')=q_2$,
and hence $\alpha=\alpha'$. We have
\begin{multline*}
  \mathfrak{C}(0)= \delta_{q=q'}
  \;\sideset{}{^\star}\sum_{b\; (\mod M)}
  \;\sideset{}{^\star}\sum_{b'\; (\mod M)}
  \left(\sum_{\substack{u\; (\mod M)\\ u\neq b}}
  \bar{\chi}(u)e\left(\frac{m\overline{q^2\ell(b-u)}}{M}\right)\right)
  \\
  \cdot
  \left(\sum_{\substack{u'\; (\mod M)\\ u'\neq b'}}
  \chi(u')e\left(\frac{-m'\overline{q^2 \ell'(b'-u')}}{M}\right)\right)
  \sum_{d|q} \sum_{d'|q} dd'\mu(q/d)\mu(q/d')
  \\
  \cdot
  \sideset{}{^\star}\sum_{\substack{\alpha\; (\mod Mrq/n_1)
  \\n_1\alpha\equiv-m\bar{\ell}(d)
  \\n_1\alpha\equiv-m'\bar{\ell'}(d')}}
  e\left(\frac{n_1\alpha\overline{bq^2} -n_1\alpha\overline{b'q^2}}{M}\right).
\end{multline*}

\subsection{Case a: $t^\varepsilon \ll \frac{LNX}{MRQ} \ll t^{1-\varepsilon}$}
\label{subsec:zero-freq-generic}

\subsubsection{$M \mid (m\bar{\ell}-m'\bar{\ell'})$}
Denote the contribution of this part to $\Omega_0$ by $\Omega_{01}$.
Moreover, the $\alpha$-sum depends on either  $b\equiv b' \;(\mod M)$ or
 $b\not\equiv b' \;(\mod M)$.
The character sum becomes
\begin{align}\label{eqn:char_sum0}
  \mathfrak{C}(0) &
  \ll M |\mathfrak{C}_1'|\sum_{d|q}\sum_{d'|q}dd'
  \sum_{\substack{\alpha\; (\mod \frac{rq}{n_1})\\
  n_1 \alpha\equiv-m\bar{\ell}\; (\mod d)\\
  n_1 \alpha\equiv-m'\bar{\ell'}\; (\mod d')}}1
  +
  |\mathfrak{C}_1''| \sum_{d|q}\sum_{d'|q}  d d '
  \sum_{\substack{\alpha\; (\mod \frac{rq}{n_1})\\
  n_1 \alpha\equiv-m\bar{\ell}\; (\mod d)\\
  n_1 \alpha\equiv-m'\bar{\ell'}\; (\mod d')}}1  \nonumber
  \\
  & \ll (M |\mathfrak{C}_1'|+|\mathfrak{C}_1''|)
  \sum_{d|q}\sum_{d'|q} (d,d') \;
  rq \; \delta_{(d,d')\mid (m\ell'-m'\ell)},
\end{align}
where
\begin{align*}
  \mathfrak{C}_1'=\sideset{}{^\star}\sum_{b\; (\mod M)}
  \sum_{\substack{u\; (\mod M)\\u\neq b}}
  \sum_{\substack{u'\; (\mod M)\\u'\neq b}}
  \bar{\chi}(u)\chi(u')
  e\left(\frac{m\overline{q^2\ell(b-u)}}{M}\right)
  e\left(-\frac{m'\overline{q^2\ell'(b-u')}}{M}\right),
\end{align*}
and
\begin{align*}
  \mathfrak{C}_1''
  &=\sideset{}{^\star}\sum_{b\; (\mod M)}
  \sideset{}{^\star}\sum_{\substack{b'\; (\mod M) \\ b'\not\equiv b \; (\mod M)}}\sum_{\substack{u\; (\mod M)\\u\neq b}}
  \sum_{\substack{u'\; (\mod M)\\u'\neq b'}}
  \bar{\chi}(u)\chi(u') \\
  & \hskip 60pt \cdot
  e\left(\frac{m\overline{q^2\ell(b-u)}}{M}\right)
  e\left(-\frac{m'\overline{q^2\ell'(b'-u')}}{M}\right) .
\end{align*}
Since $M\mid (m\bar{\ell}-m'\bar{\ell'})$, similar to  \cite[Eq. (6.3)]{Sharma2019}, we have square root cancellation in the sum over $u$ and $u'$, and hence we obtain
\[
  \mathfrak{C}_1' \ll M^2
  \quad \textrm{and} \quad
  \mathfrak{C}_1'' \ll M^3.
\]
Hence
\[
  \mathfrak{C}(0)   \ll M^3
  \sum_{d|q}\sum_{d'|q} (d,d') \;
  rq \; \delta_{(d,d')\mid (m\ell'-m'\ell)}.
\]

Note that $(M,(d,d'))=1$
and
\[ |A(1,\ell)A(1,\ell')\lambda_f(m)\lambda_f(m')| \ll |A(1,\ell)\lambda_f(m')|^2 + |A(1,\ell')\lambda_f(m)|^2.
\]
By Lemma \ref{lemma:J}, we have
\begin{align} \label{eqn:sum-l&m}
  \sum_{\ell}\sum_{\ell'}\sum_{m}\sum_{m'} |\mathfrak{J}(0)|
  & \ll
  \sum_{\ell}|A(1,\ell)|^2 \sum_{m'} |\lambda_f(m')|^2 \mathop{\sum_{\ell'}\sum_{m}}_{M(d,d')\mid (m\ell'-m'\ell)} |\mathfrak{J}(0)|
  \nonumber \\
  & \hskip 30pt +
  \sum_{\ell'}|A(1,\ell')|^2 \sum_{m} |\lambda_f(m)|^2 \mathop{\sum_{\ell}\sum_{m'}}_{M(d,d')\mid (m\ell'-m'\ell)} |\mathfrak{J}(0)|
  \nonumber \\
  &
  \ll N^\varepsilon  L M_1 \left(\frac{ LM_1 (\frac{LNX}{MRQt})^2 + LM_1\frac{MRQ}{LNX}  }{M(d,d')} +1 \right) \frac{1}{t} .
\end{align}
Hence we have
\begin{align*}
  \Omega_{01}
  &
  \ll N^\varepsilon \frac{N_0  M^3}{n_1^2 M_1^{1/2}} \frac{L^{2/3} q_1^3}{R^3}
  \bigg(
  \sum_{\substack{q_2\sim R/q_1\\ (q_2,rn_1)=1}}
  \sum_{\substack{d|q \\d'|q}}
   rq   LM_1
   \left(\frac{LM_1 (\frac{LNX}{MRQt})^2 + LM_1\frac{MRQ}{LNX}   }{M} +  q \right)
   \frac{1}{t}  \bigg)
  \\
  &
  \ll N^\varepsilon \frac{N_0 M^3}{n_1^2 M_1^{1/2}}
    \frac{L^{2/3} q_1^2}{R^2}
   r R   LM_1
   \bigg(\frac{LM_1 (\frac{LNX}{MRQt})^2 + LM_1\frac{MRQ}{LNX}   }{M t} + \frac{R}{t}
  \bigg)
.
\end{align*}
By using $N_0=\frac{N^2L^2X^3r}{Q^3}$ and $M_1\ll \frac{t^2R^2M^2}{N}$, we get
\begin{align*}
  \Omega_{01} &
  \ll N^\varepsilon \frac{r^2 N^{3/2} L^{11/3} R q_1^2  M^4 X^3}{n_1^2 Q^3}
  \bigg(\frac{L^3 N  X^2}{M R Q^2}
  +  \frac{M^2R^2Q t^2}{ N^2X } + 1 \bigg).
\end{align*}
Hence, the contribution from $\Omega_{01}$ to \eqref{eqn:led to estimate} is
\begin{align*} 
  &\ll \frac{N^{3/4+\varepsilon}X^{1/2}}{M^{2}L^{4/3} Q^{1/2}r^{1/2}}
   \sum_{n_1\ll RMr} n_1^{\theta_3}\sum_{\frac{n_1}{(n_1,r)}|q_1|(rn_1)^\infty} \frac{1}{n_1 q_1^{1/2}}
  \left( \frac{r^2 N^{3/2} L^{11/3} R   M^4 X^3}{Q^3} \right)^{1/2}
   \nonumber
   \\
   & \hskip 30pt \cdot
  \left(\frac{L^3 N  X^2}{M R Q^2} +  \frac{M^2R^2Q t^2}{ N^2X } +  1  \right)  ^{1/2}
  \nonumber
  \\
    &
    \ll    N^{\varepsilon} r^{1/2} \frac{N^{3/2} L^{1/2} R^{1/2}   X^{2}}
    {  Q^{2}}
  \left(\frac{L^{3/2}  N^{1/2} X}{M^{1/2}  R^{1/2}Q}
  +  \frac{M R Q^{1/2} t }{ N X^{1/2} } +  1 \right).
\end{align*}
Recall $Q=(\frac{NL}{MK})^{1/2}$.
Thus, by $X\ll t^\varepsilon$ and $R\leq Q$, we arrive at
\begin{align} \label{eqn:Omega01<<}
    &
    \ll
     N^{\varepsilon} r^{1/2} \frac{N^{2} L^{2}  } { M^{1/2} Q^{3}}
     + N^{\varepsilon} r^{1/2}  N^{1/2} L^{1/2} Mt
     +  N^{\varepsilon} r^{1/2} \frac{N^{3/2} L^{1/2}  } {  Q^{3/2}}\nonumber
    \\
   &
   \ll N^{\varepsilon} r^{1/2}  N^{1/2} L^{1/2} M K^{3/2}
   + N^{\varepsilon} r^{1/2}  N^{1/2} L^{1/2} M t
   +
   N^{\varepsilon} r^{1/2} \frac{N^{3/4}  M^{3/4}K^{3/4} }
    { L^{1/4}}.
\end{align}

\subsubsection{$M \nmid(m\bar{\ell}-m'\bar{\ell'})$}
Denote the contribution of this part to $\Omega_0$ by $\Omega_{02}$.
In this case, we also have $q_2=q_2'$ and $\alpha=\alpha'$.
So we can estimate the character as in \eqref{eqn:char_sum0}.
Since $M \nmid(m\bar{\ell}-m'\bar{\ell'})$, the non degeneracy holds for the variables $b,u,u'$ in $\mathfrak{C}_1'$ and $\mathfrak{C}_1''$ and hence we have
\[
  \mathfrak{C}_1' \ll M^{3/2}
  \quad \textrm{and} \quad
  \mathfrak{C}_1'' \ll M^{5/2}.
\]
Thus we get
\begin{align}\label{eqn:char_sum02}
  \mathfrak{C}(0)
  & \ll M^{5/2}
  \sum_{d|q_1q_2}\sum_{d'|q_1q_2}dd'
  \frac{rq_1q_2}{[d,d']}  \delta_{(d,d')\mid (m\ell'-m'\ell)}.
\end{align}
As in \eqref{eqn:sum-l&m}, by Lemma \ref{lemma:J}, we have
\begin{align*} 
  \mathop{\sum_{\ell}\sum_{\ell'}\sum_{m}\sum_{m'}}_{(d,d')\mid (m\ell'-m'\ell)} |\mathfrak{J}(0)|
  &
  \ll N^\varepsilon  L M_1 \left(\frac{ LM_1 (\frac{LNX}{MRQt})^2 +LM_1\frac{MRQ}{LNX}  }{(d,d')} +1 \right) \frac{1}{t} .
\end{align*}
Hence, similar to the estimate for $\Omega_{01}$, we have
\begin{align*}
  \Omega_{02}&
  \ll N^\varepsilon \frac{r^2 N^{3/2} L^{11/3} R q_1^2  M^{7/2} X^3}{n_1^2 Q^3}
  \bigg(\frac{L^3 N  X^2}{R Q^2} +  \frac{M^3R^2Q t^2}{ N^2X } +  1  \bigg).
\end{align*}
Hence, similar to the estimate for \eqref{eqn:Omega01<<}, the contribution from $\Omega_{02}$ to \eqref{eqn:led to estimate} is
\begin{align} \label{eqn:Omega02<<}
    &\ll  N^{\varepsilon} r^{1/2}  N^{1/2} L^{1/2} M^{5/4} K^{3/2}
   + N^{\varepsilon} r^{1/2}  N^{1/2} L^{1/2} M^{5/4} t +
   N^{\varepsilon} r^{1/2} \frac{N^{3/4}  M^{1/2}K^{3/4} }
    { L^{1/4}}.
\end{align}

\subsection{Case a: $\frac{LNX}{MRQ} \gg t^{1-\varepsilon}$}

By the same argument as in the \S \ref{subsec:zero-freq-generic} and Lemma \ref{lemma:J2} we have
\begin{align*}
  \Omega_{0}&
  \ll N^\varepsilon \frac{N_0 M^3}{n_1^2 M_1^{1/2}}
  \frac{L^{2/3} q_1^3}{R^3}
  \sum_{\substack{q_2\sim R/q_1\\ (q_2,rn_1)=1}}
  \sum_{\substack{d|q \\d'|q}}
   rq   LM_1
   \left(\frac{LM_1}{M} +  q \right) \frac{MRQ}{NLX}
  \\
  &  \hskip  30pt +
  N^\varepsilon \frac{N_0 M^{5/2}}{n_1^2 M_1^{1/2}}
  \frac{L^{2/3} q_1^3}{R^3}
  \sum_{\substack{q_2\sim R/q_1\\ (q_2,rn_1)=1}}
  \sum_{\substack{d|q \\d'|q}}
   rq   LM_1
   \left( LM_1   +  q \right)  \frac{MRQ}{NLX} \\
  &
  \ll N^\varepsilon \frac{N_0}{n_1^2 M_1^{1/2}} \frac{L^{2/3} q_1^3}{R^3}
  \sum_{\substack{q_2\sim R/q_1\\ (q_2,rn_1)=1}}
   rq   LM_1
   \left(LM_1 M^{5/2} +  q M^3 \right) \frac{MRQ}{NLX}
   \\
   &
  \ll N^\varepsilon
  \frac{r^2 N^{3/2} M q_1^2 L^{11/3} X^3   }{n_1^2 Q^3}
   \left(\frac{NL^3 M^{5/2} X^2}{Q^2}  +  R M^3 \right) .
\end{align*}
Here we have used $N_0=\frac{N^2L^2X^3r}{Q^3}$ and $M_1\ll \frac{NL^2X^2}{Q^2} N^\varepsilon$.
Therefore, the contribution from $\Omega_{0}$ to \eqref{eqn:led to estimate} is
\begin{align*} 
  &\ll \frac{N^{3/4+\varepsilon}X^{1/2}}{M^{2}L^{4/3}Q^{1/2}r^{1/2}}
   \sum_{n_1\ll RMr} n_1^{\theta_3}\sum_{\frac{n_1}{(n_1,r)}|q_1|(rn_1)^\infty} \frac{1}{n_1 q_1^{1/2}}  \nonumber
   \\
   & \hskip 60pt \cdot
  \left( \frac{r^2 N^{3/2} M   L^{11/3} X^3   }{ Q^3}
   \left(\frac{NL^3 M^{5/2} X^2}{Q^2}  +  R M^3 \right)  \right)^{1/2}
  \nonumber
  \\
    &\ll N^{\varepsilon} r^{1/2} \frac{ N^{3/2}  L^{1/2}   X^{2} }
     {M^{3/2}  Q^{2}}
   \left(\frac{N^{1/2}  L^{3/2} M^{5/4} X}{Q}  +  R^{1/2}  M^{3/2} \right).
\end{align*}
Note that we have $R\ll \frac{NLX}{MQt^{1-\varepsilon}}$ now.
By this and inserting $Q=(\frac{NL}{MK})^{1/2}$, one can bounded the above by
\begin{align} \label{eqn:Omega03<<}
    &\ll N^{\varepsilon} r^{1/2}  N^{1/2}  L^{1/2} M^{5/4}   K^{3/2}
     +
   N^{\varepsilon} r^{1/2} \frac{N^{3/4}   M^{3/4} K^{5/4}}
     { L^{1/4}   t^{1/2}}.
\end{align}

\subsection{Case b: $\frac{LNX}{MRQ} \asymp  t$}

By the same argument as in the \S \ref{subsec:zero-freq-generic} and Lemma \ref{lemma:J3} we have
\begin{align*}
  \Omega_{0}&
  \ll N^\varepsilon \frac{N_0 M^3}{n_1^2 M_1^{1/2}}
  \frac{L^{2/3} q_1^4}{R^4}
  \sum_{\substack{q_2\sim R/q_1\\ (q_2,rn_1)=1}}
  \sum_{\substack{d|q \\d'|q}}
   rq   LM_1
   \left(\frac{LM_1}{M} +  q \right) \frac{MRQ}{NLX}
  \\
  &  \hskip  30pt +
  N^\varepsilon \frac{N_0 M^{5/2}}{n_1^2 M_1^{1/2}}
  \frac{L^{2/3} q_1^4}{R^4}
  \sum_{\substack{q_2\sim R/q_1\\ (q_2,rn_1)=1}}
  \sum_{\substack{d|q \\d'|q}}
   rq   LM_1
   \left( LM_1   +  q \right)  \frac{MRQ}{NLX} \\
  &
  \ll N^\varepsilon \frac{N_0}{n_1^2} \frac{L^{2/3} q_1^3}{R^3}
   r R L M_1^{1/2}
   \left(LM_1 M^{5/2} +  R M^3 \right) \frac{MRQ}{NLX}.
\end{align*}
By $N_0=\frac{N^2L^2X^3r}{Q^3}$ and $M_1\ll \frac{M^2R^2}{N} t^\varepsilon$ we obtain
\begin{align*}
  \Omega_{0}&
  \ll N^\varepsilon \frac{1}{n_1^2}
  \frac{N^2L^2X^3r}{Q^3} \frac{MRQ}{NLX} \frac{L^{2/3} q_1^3}{R^3}
   r R L \frac{MR}{N^{1/2}}
   \left(L  M^{5/2} \frac{M^2R^2}{N}  +  R M^3 \right)
   \\
  & \ll N^\varepsilon
  \frac{r^2 N^{1/2} M^2 q_1^3 L^{8/3} X^2 }{n_1^2  Q^2}
   \left(\frac{LM^{9/2}R^2}{N}  +  R M^3 \right) .
\end{align*}
Thus, the contribution from $\Omega_{0}$ to \eqref{eqn:led to estimate case b} is
\begin{align*} 
  &\ll \frac{N^{1+\varepsilon}X^{1/2}}{M^{5/2}L^{4/3}Q^{1/2}r^{1/2}}
   \sum_{n_1\ll RMr} n_1^{\theta_3}\sum_{\frac{n_1}{(n_1,r)}|q_1|(rn_1)^\infty} \frac{1}{n_1 q_1^{1/2}}  \nonumber
   \\
   & \hskip 60pt \cdot
  \left( \frac{r^2 N^{1/2} M^2  L^{8/3} X^2 }{ Q^2}
   \left(\frac{LM^{9/2}R^2}{N}  +  R M^3 \right) \right)^{1/2}
  \nonumber
  \\
    &\ll N^{\varepsilon} r^{1/2}  \frac{N^{3/4}  L^{1/2}M^{3/4}R X^{3/2} }{ Q^{3/2}}   +
     N^{\varepsilon} r^{1/2} \frac{ N^{5/4}  R^{1/2}  X^{3/2} } { Q^{3/2}} .
\end{align*}
By $Q=(\frac{NL}{MK})^{1/2}$ again and noting that $R\asymp  \frac{NLX}{MQt}$,
we deduce that the above is dominated by
\begin{align} \label{eqn:Omega0b<<}
    &\ll N^{\varepsilon} r^{1/2}  N^{1/2}  L^{1/4} M   \frac{ K^{5/4} }{t}
     +
   N^{\varepsilon} r^{1/2} \frac{N^{3/4}   M^{1/2}  K}
     { L^{1/2}   t^{1/2}}.
\end{align}

\subsection{Case c: $\frac{LNX}{MRQ} \ll t^{\varepsilon}$}

By the same argument as in the \S \ref{subsec:zero-freq-generic} and \eqref{eqn:I-c} we have (taking $M_1\asymp \frac{R^2M^2t^2}{N}$)
\begin{align*}
  \Omega_{0}&
  \ll N^\varepsilon \frac{N_0 M^3}{n_1^2 M_1^{1/2}}
  \frac{L  q_1^4}{R^4}
  \sum_{\substack{q_2\sim R/q_1\\ (q_2,rn_1)=1}}
  \sum_{\substack{d|q \\d'|q}}
   rq   LM_1
   \left(\frac{LM_1}{M} +  q \right)
  \\
  &  \hskip  30pt +
  N^\varepsilon \frac{N_0 M^{5/2}}{n_1^2 M_1^{1/2}}
  \frac{L  q_1^4}{R^4}
  \sum_{\substack{q_2\sim R/q_1\\ (q_2,rn_1)=1}}
  \sum_{\substack{d|q \\d'|q}}
   rq   LM_1
   \left( LM_1   +  q \right)   \\
  &
  \ll N^\varepsilon \frac{N_0}{n_1^2} \frac{L  q_1^3}{R^3}
   r R  L M_1^{1/2}
   \left(LM_1 M^{5/2} +  R M^3 \right)  .
\end{align*}
By $N_0 \ll \frac{R^3 M^3 r}{LN}t^\varepsilon$ and $M_1\asymp \frac{R^2M^2t^2}{N}$, one has
\begin{align*}
   \Omega_{0} &
  \ll N^\varepsilon \frac{1}{n_1^2}
  \frac{R^3 M^3 r}{LN}   \frac{L  q_1^3}{R^3}
   r R  L \frac{RMt}{N^{1/2}}
   \left(L  M^{5/2} \frac{R^2M^2t^2}{N}  +  R M^3 \right)
   \\
   &
  \ll N^\varepsilon
  \frac{r^2 q_1^3 R^2 L  M^4 t}{n_1^2 N^{3/2}}
   \left(\frac{LM^{9/2}R^2 t^2}{N}  +  R M^3 \right) .
\end{align*}
So the contribution from $\Omega_{0}$ to \eqref{eqn:led to estimate c} is
\begin{align*} 
  &\ll \frac{N^{5/4+\varepsilon}X }{M^{5/2}L Q r^{1/2} t^{1/2}}
  \left(\frac{r^2R^2 L  M^4 t}{N^{3/2}}
   \left(\frac{LM^{9/2}R^2 t^2}{N}  +  R M^3 \right)  \right)^{1/2}
  \nonumber
     \\
    &\ll N^{\varepsilon} r^{1/2} \frac{ M^{7/4}R^2 t  X} {  Q}
      + N^{\varepsilon} r^{1/2} \frac{ N^{1/2}M  R^{3/2} X}
     {L^{1/2}   Q  }.
\end{align*}
Now we have the condition $X\ll \frac{MRQ}{LN}t^\varepsilon$,
so one computes the above as
\begin{align} \label{eqn:Omega0c<<}
   &\ll N^{\varepsilon} r^{1/2} N^{1/2} L^{1/2} M^{5/4} \frac{ t  } {  K^{3/2}}
      + N^{\varepsilon} r^{1/2} \frac{ N^{3/4} M^{3/4} }  {L^{1/4}     K^{5/4}  }.
\end{align}

Combining \eqref{eqn:Omega01<<}, \eqref{eqn:Omega02<<}, \eqref{eqn:Omega03<<}, \eqref{eqn:Omega0b<<}
and \eqref{eqn:Omega0c<<}, we see that the contribution of the zero frequency is dominated by
\begin{multline}\label{eqn:contribution zero frequency}
  \ll N^{\varepsilon} r^{1/2}  N^{1/2} L^{1/2} M^{5/4} K^{3/2}
   + N^{\varepsilon} r^{1/2}  N^{1/2} L^{1/2} M^{5/4} t
   \\
   +
   N^{\varepsilon} r^{1/2} \frac{N^{3/4}  M^{3/4}K^{3/4} }
    { L^{1/4}}
    +
   N^{\varepsilon} r^{1/2} \frac{N^{3/4}   M^{3/4} K^{5/4}}
     { L^{1/4}   t^{1/2}}.
\end{multline}


\section{The non-zero frequencies}\label{sec:non-zero-freq}

\subsection{$n_2\not\equiv 0\;(\mod M)$}
Denote the contribution from $n_2\not\equiv0 \; (\bmod M)$
in $\Omega_\mathbf{*}$ by $\Omega_{\mathbf{*},1}$,
where $\mathbf{*}\in\{\mathbf{a},\mathbf{b},\mathbf{c}\}$.
We have
\begin{align*}
  \mathfrak{C}(n_2)\ll |\mathfrak{C}_1(n_2)\mathfrak{C}_2(n_2)\mathfrak{C}_3(n_2)|,
\end{align*}
where
\begin{align*}
  \mathfrak{C}_1(n_2)& = \;\sideset{}{^\star}\sum_{b\bmod M }
   \;\sideset{}{^\star}\sum_{b'\bmod M }\left(\sum_{\substack{u\bmod M \\ u\neq b}}\bar{\chi}(u)e\left(\frac{m\overline{q_2^2\ell(b-u)}}{M}\right)\right)
  \\ &
  \cdot
  \left(\sum_{\substack{u'\bmod M \\ u'\neq b'}}\chi(u')e\left(-\frac{m'\overline{q_2'^2\ell(b'-u')}}{M}\right)\right)
  \left(\sideset{}{^\star}\sum_{\substack{\alpha,\alpha'\bmod M \\q_2'\bar{\alpha}-q_2\bar{\alpha'}\equiv-\eta_1n_2(M)}}
  e\left(\frac{\alpha\overline{bq_2^2}-\alpha'\overline{b'q_2'^2}}{M}\right)\right),
\end{align*}
\begin{align*}
  \mathfrak{C}_2(n_2)=\sum_{d_1|q_1}\sum_{d_1'|q_1}d_1d_1'\underset{q_2'\alpha_1-q_2\bar{\alpha_1'}\equiv -\eta_1n_2(\bmod \frac{rq_1}{n_1})}{\sideset{}{^\star}\sum_{\substack{\alpha_1 \bmod \frac{rq_1}{n_1} \\
  n_1\alpha_1\equiv -m\bar{\ell} \bmod d_1 }}
  \;
  \sideset{}{^\star}\sum_{\substack{\alpha_1' \bmod \frac{rq_1}{n_1} \\
  n_1\alpha_1'\equiv -m'\bar{\ell'} \bmod d_1' }}}1,
\end{align*}
and
\begin{align*}
  \mathfrak{C}_3(n_2)=\sum_{d_2|q_2}\sum_{d_2'|q_2'}d_2d_2'
  \sideset{}{^\star}\sum_{\substack{\alpha_2 (q_2), \; \alpha_2' (q_2')\\
  q_2'\bar{\alpha_2}-q_2\bar{\alpha_2'}\equiv-\eta_1n_2 \bmod q_2q_2'\\
  n_1\alpha_2\equiv -m\bar{\ell} \bmod d_2 \\
  n_1\alpha_2'\equiv -m'\bar{\ell'} \bmod d_2' }}1.
\end{align*}
For $\mathfrak{C}_2(n_2)$, the congruence condition determines at most one solution of $\alpha_1'$  in terms of $\alpha_1$,
  and hence we have
  \begin{equation*}
    \mathfrak{C}_2(n_2) \leq  \sum_{d_1\mid q_1} d_1
  \sum_{d_1'\mid q_1} d_1'
    \sideset{}{^\star}\sum_{\substack{\alpha_1 \bmod rq_1/n_1 \\ - m \bar\ell \equiv n_1\alpha_1   \bmod d_1 }}  1
    .
  \end{equation*}
  Note that $\alpha_1$ is uniquely determined modulo $d_1/(d_1,n_1)$. Since $(\frac{d_1}{(d_1,n_1)},\frac{n_1}{(d_1,n_1)})=1$,  $\frac{d_1}{(d_1,n_1)}\mid \frac{q_1}{(d_1,n_1)}$ and $\frac{n_1}{(d_1,n_1)}\mid \frac{rq_1}{(d_1,n_1)}$, we have
  $\frac{d_1}{(d_1,n_1)}\mid \frac{rq_1}{n_1}$. Hence we have
  \[
    \mathfrak{C}_2(n_2)  \ll \frac{rq_1}{n_1} \sum_{d_1\mid q_1}
  \sum_{d_1'\mid q_1} d_1' (d_1,n_1) \delta_{(d_1,n_1)\mid m }.
  \]
  Similarly by considering $\alpha_1$-sum first we have
  \[
    \mathfrak{C}_2(n_2)  \ll \frac{rq_1}{n_1} \sum_{d_1\mid q_1}
  \sum_{d_1'\mid q_1} d_1 (d_1',n_1) \delta_{(d_1',n_1)\mid m' }.
  \]
For $\mathfrak{C}_3(n_2)$, from the congruence $q_2'\bar{\alpha}-q_2\bar{\alpha'}\equiv-\eta_1n_2 \bmod q_2q_2'$ we have $(q_2,q_2')\mid n$.
Since $(n_1,q_2)=1$, we have $\alpha \equiv  - m\bar\ell \bar{n}_1 \bmod d_2$ and hence $ q_2' \bar\alpha \equiv - \eta_1 n_2 \bmod d_2$. Therefore we get $ n_1 q_2' \equiv \eta_1 mn_2 \bar\ell  \bmod d_2$. Similarly we have $- n_1 q_2 \equiv \eta_1 m'n_2 \bar{\ell'}\bmod d_2'$. Note that the congruence determines $\alpha_2 \bmod [q_2/(q_2,q_2'),d_2]$ and for each given $\alpha_2$ we have at most one solution of $\alpha_2' \bmod q_2'$. Hence we have
  \[
    \mathfrak{C}_3(n) \ll \mathop{\sum\sum}_{\substack{d_2 \mid (q_2, -q_2' n_1\ell + \eta_1mn_2) \\
    d_2' \mid (q_2', q_2n_1\ell' + \eta_1m'n_2)}} d_2d_2' \; \frac{q_2}{[q_2/(q_2,q_2'),d_2]} \; \delta_{(q_2,q_2')\mid n}.
  \]
Similarly we have
\[
  \mathfrak{C}_3(n_2)\ll \underset{\substack{d_2|(q_2,-q_2'n_1\ell+\eta_1mn_2)
  \\d_2'|(q_2',q_2n_1\ell'+\eta_1m'n_2)}}{\sum\sum}d_2d_2'
   \frac{q_2'}{[q_2'/(q_2,q_2'),d_2']}
  \delta_{(q_2,q_2')|n_2}.
\]
Together with \cite[Eq. (5.6)]{Sharma2019} and \cite[Proposition 4.4]{LMS}, we have
\begin{align*}
  &\mathfrak{C}_1(n_2)\ll M^{5/2},
  \\
  &\mathfrak{C}_2(n_2)\ll \frac{q_1r}{n_1}
  \sum_{d_1|q_1}\sum_{d_1'|q_1}
  \min\{d_1'(d_1,n_1)\delta_{(d_1,n_1)|m},d_1(d_1',n_1)\delta_{(d_1',n_1)|m'}\},
  \\
  &\mathfrak{C}_3(n_2)\ll \underset{\substack{d_2|(q_2,-q_2'n_1\ell+\eta_1mn_2)
  \\d_2'|(q_2',q_2n_1\ell'+\eta_1m'n_2)}}{\sum\sum}d_2d_2'
  \min\left\{\frac{q_2}{[q_2/(q_2,q_2'),d_2]},\frac{q_2'}{[q_2'/(q_2,q_2'),d_2']}\right\}
  \delta_{(q_2,q_2')|n_2}.
\end{align*}

Now, we need some  careful counting to estimate $\Omega_{\mathbf{*},1}$ (cf. \cite[\S6]{Munshi2018}, \cite[\S5]{Sharma2019}, \cite[\S 6]{LMS} and \cite[\S 4.5]{LinSun}).
\subsubsection{Case a}\label{subsub: n2 not equiv 0 and case a}
It is obvious that, for fixed tuple $(n_1,\alpha,n_2)$, the congruence
\begin{align*}
        -q_2'n_1\ell+\eta_1mn_2 \equiv 0(\bmod d_2)
\end{align*}
has a solution if and only if $(d_2,n_2)|q_2'\ell$, in which case $m$ is uniquely determined modulo
$\frac{d_2}{(d_2,n_2)}$.
Combining this together with the condition $\delta_{(d_1,n_1)|m}$ in $\mathfrak{C}_2(n_2)$,
the number of $m$ ($\sim M_1$) is dominated by
$\delta_{(d_2,n_2)|q_2'}O(1+\frac{M_1(d_2,n_2)}{(d_1,n_1)d_2})$.
Then, we get
\begin{align*}
  \Omega_{\mathbf{a},1}\ll & \frac{q_1^4rN_0L^{2/3}M^{5/2}}{n_1^3M_1^{1/2}R^3}\sum_{d_1|q_1}\sum_{d_1'|q_1}
  d_1'(d_1,n_1)\underset{(\ell\ell',q_1)=1}
  {\sum_{\ell\in\mathcal{L}}\sum_{\ell'\in\mathcal{L}}}|A_\pi(1,\ell)A_{\pi}(1,\ell')|
  \\
  & \cdot
  \underset{(q_2q_2',n_1M)=1}{\sum_{\substack{q_2\sim R/q_1\\(q_2,\ell)=1}}\sum_{\substack{q_2'\sim R/q_1\\(q_2',\ell')=1}}}
  \sum_{d_2|q_2}\sum_{d_2'|q_2'}
  \sum_{\substack{1\leq n_2\leq N_2\\ (d_2,n_2)|q_2'\ell\\ (q_2,q_2')|n_2}}d_2d_2'
  \left(1+\frac{M_1(d_2,n_2)}{(d_1,n_1)d_2}\right)
  \\
  & \cdot \min\left\{\frac{q_2}{[q_2/(q_2,q_2'),d_2]},\frac{q_2'}{[q_2'/(q_2,q_2'),d_2']}\right\}
  \sum_{\substack{m'\sim M_1\\ q_2n_1\ell'+\eta_1m'n_2\equiv 0(\bmod d_2')}}|\lambda_f(m')|^2
  |\mathfrak{J}_{\mathbf{a}}(n_2)|.
\end{align*}
Let us make the following notation:
\begin{align*}
  & (q_2,q_2')=q_3,\quad q_2=q_3q_4,\quad q_2'=q_3q_4'
  \\
  & d_2=d_0d_3d_4, \quad d_0|(q_3,q_4),\quad d_3|q_3, \quad (d_3,q_4)=1,\quad (d_4,q_3)=1,\quad d_4|q_4,
  \\
  &d_2'=d_3'd_4', \quad d_3'|q_3, \quad d_4'|q_4.
\end{align*}
It is easy to see that
$(d_2,n_2)\leq (d_0d_3,n_2)(d_4,n_2)\leq d_0d_3(d_4,n_2)=d_0d_3(d_4,\frac{n_2}{q_3})$,
$\frac{q_2}{[q_2/(q_2,q_2'),d_2]}=\frac{q_3q_4}{[q_4,d_2]}\leq \frac{q_3}{d_3}$,
and
$\frac{q_2'}{[q_2'/(q_2,q_2'),d_2']}=\frac{q_3q_4'}{[q_4',d_2']}\leq \frac{q_3q_4'}{d_2'}$.
Then, breaking the $n_2$-sum into dyadic segments $n_2\sim \tilde{N}_2$ with $\tilde{N}_2\ll N_2$
and using Lemma \ref{lemma:J} \& Lemma \ref{lemma:J2}, one has
\begin{align*}
\begin{split}
  \Omega_{\mathbf{a},1}\ll & \sup_{\substack{1\ll\tilde{N}_2\ll N_2\\ \textup{dyadic}}}
  \frac{N^\varepsilon q_1^4rN_0L^{2/3}M^{5/2}}{n_1^3M_1^{1/2}R^3}\sum_{d_1|q_1}\sum_{d_1'|q_1}(d_1,n_1)d_1'
  \underset{(\ell\ell',q_1)=1}{\sum_{\ell\in\mathcal{L}}\sum_{\ell'\in\mathcal{L}}}
  |A_\pi(1,\ell)A_{\pi}(1,\ell')|
  \\
  & \cdot
  \sum_{\substack{q_3\leq R/q_1\\(q_3,n_1\ell\ell'M)=1}}
  \underset{(q_4q_4',n_1M)=1}{\sum_{\substack{q_4\sim R/q_3q_1\\(q_4,\ell)=1}}\sum_{\substack{q_4'\sim R/q_3q_1\\(q_4',\ell')=1}}}
  \sum_{d_0|(q_3,q_4)}\sum_{\substack{d_3|q_3\\(d_3,q_4)=1}}
  \sum_{\substack{d_4|q_4\\(d_4,q_3)=1}}d_0d_3d_4
  \\
  & \cdot \sum_{d_3'|q_3}\sum_{d_4'|q_4'}d_3'd_4'
  \sum_{\substack{n_2\sim \tilde{N}_2\\ (d_2,n_2)|q_3q_4'\ell\\ q_3|n_2}}
  \left(1+\frac{M_1(d_4,n_2/q_3)}{(d_1,n_1)d_4}\right)C(\tilde{N}_2)
  \\
  & \cdot \min\left\{\frac{q_3}{d_3},\frac{q_3q_4'}{d_3'd_4'}\right\}
  \sum_{\substack{m'\sim M_1\\ q_3q_4n_1\ell'+\eta_2m'n_2\equiv 0(\bmod d_3'd_4')}}|\lambda_f(m')|^2,
\end{split}
\end{align*}
where
\begin{align}\label{C(N2)}
C(\tilde{N}_2)=
  \begin{cases}
        \frac{RQ^{3/2}M^{1/2}n_1^{1/2}}{tNLX^{3/2}q_1^{1/2}\tilde{N}_2^{1/2}}\quad
        & N_2'\ll \tilde{N}_2\ll N_2\, \& \,\frac{NLX}{MRQ}\ll t^{1-\varepsilon}
        \\
        \frac{1}{t} \quad &\tilde{N}_2\ll N_2'\, \& \, \frac{NLX}{MRQ}\ll t^{1-\varepsilon}
        \\
        \frac{MRQ}{NLX} &\tilde{N}_2\ll N_2\, \& \, \frac{NLX}{MRQ}\gg t^{1-\varepsilon}
     \end{cases}
\end{align}

$\mathbf{Case\ (i):}$ $\bm{q_3q_4n_1\ell'+\eta_2m'n_2\neq 0}$.
Denote the contribution from this part in $\Omega_{\mathbf{a},1}$ by $\Omega_{\mathbf{a},11}$.
Write $q_3=d_3'q_5$, $q_4=d_0q_6$, and $q_4'=d_4'q_6'$, then we have
\begin{align*}
  \Omega_{\mathbf{a},11}\ll & \sup_{\substack{1\ll\tilde{N}_2\ll N_2\\ \textup{dyadic}}}\frac{N^\varepsilon q_1^5rN_0L^{2/3}M^{5/2}}{n_1^3M_1^{1/2}R^3}\sum_{d_1|q_1}(d_1,n_1)
  \sum_{\ell\in\mathcal{L}}\sum_{\ell'\in\mathcal{L}}|A_\pi(1,\ell)A_{\pi}(1,\ell')|
  \\
  & \cdot
  \sum_{d_3'\leq R/q_1}d_3'\sum_{q_5\leq R/q_1d_3'}\sum_{\substack{d_0\leq R/q_1d_3'q_5\\ d_0|d_3'q_5}}d_0\sum_{\substack{d_3|d_3'q_5\\(d_3,q_4)=1}}\sum_{q_6\sim R/q_1d_3'd_0q_5}
  \sum_{\substack{d_4|d_0q_6\\(d_4,d_3'q_5)=1}}
  \\
  & \cdot \sum_{\substack{n_2\sim \tilde{N}_2\\ d_3'q_5|n_2}}d_3'q_5
  \left(d_4+\frac{M_1(d_4,n_2/d_3'q_5)}{(d_1,n_1)}\right)C(\tilde{N}_2)\sum_{m'\sim M_1}|\lambda_f(m')|^2
  \\
  & \cdot \sum_{\substack{d_4'\leq R/q_1d_3'q_5\\ 0\neq d_3'd_4q_5q_6n_1\ell'+\eta_1m'n_2\equiv 0(\bmod d_3'd_4')}}d_4'\sum_{q_6'\sim R/q_1d_3'q_5d_4'}1,
\end{align*}
By the well known bound of the divisor function, the number of the tuple $(d_0,d_3,d_4,d_4')$ is bounded by $O(N^\varepsilon)$. Combining this together with \eqref{eqn:RS2} and \eqref{eqn:RS3}, we get
\begin{align}\label{Omega1}
  \Omega_{\mathbf{a},11}\ll \sup_{\substack{1\ll\tilde{N}_2\ll N_2\\ \textup{dyadic}}}
  \frac{N^\varepsilon q_1^3rN_0M^{5/2}M_1^{1/2}L^{8/3}\tilde{N}_2C(\tilde{N}_2)}{n_1^3R}
  (R+M_1)
\end{align}

$\mathbf{Case\ (ii):}$ $\bm{q_3q_4n_1\ell'+\eta_2m'n_2=0}$.
Denote the contribution from this part in $\Omega_{\mathbf{a},1}$ by $\Omega_{\mathbf{a},12}$.
In this subsection, we use $(d_2,n_2)\leq (q_2'\ell, q_2)=q_3$.
It is therefore we have
\begin{align*}
  \Omega_{\mathbf{a},12}\ll & \sup_{\substack{1\ll\tilde{N}_2\ll N_2\\ \textup{dyadic}}}\frac{N^\varepsilon q_1^4rN_0L^{2/3}M^{5/2}}{n_1^3M_1^{1/2}R^3}\sum_{d_1|q_1}\sum_{d_1'|q_1}d_1'(d_1,n_1)
  \\
  & \cdot \sum_{\ell\in\mathcal{L}}\sum_{\ell'\in\mathcal{L}}|A_\pi(1,\ell)A_{\pi}(1,\ell')|
  \sum_{q_3\leq R/q_1}\sum_{q_4,q_4'\sim R/q_1q_3}\sum_{d_0|(q_3,q_4)}\sum_{\substack{d_3|q_3\\(d_3,q_4)=1}}
  \sum_{\substack{d_4|q_4\\(d_4,q_3)=1}}d_0d_3d_4
  \\
  & \cdot \sum_{d_3'|q_3}\sum_{d_4'|q_4'}d_3'd_4'
  \sum_{\substack{n_2\sim \tilde{N}_2\\ q_3|n_2}}
  \left(1+\frac{M_1q_3}{(d_1,n_1)d_0d_3d_4}\right)
  C(\tilde{N}_2)
  \\
  & \cdot \min\left\{\frac{q_3}{d_3},\frac{q_3q_4'}{d_3'd_4'}\right\}
  \sum_{\substack{m'\sim M_1\\ q_3q_4n_1\ell'+\eta_1m'n_2=0}}|\lambda_f(m')|^2
  \\
  \ll & \sup_{\substack{1\ll\tilde{N}_2\ll N_2\\ \textup{dyadic}}}\frac{N^\varepsilon q_1^5rN_0L^{2/3}M^{5/2}}{n_1^3M_1^{1/2}R^3}\sum_{d_1|q_1}(d_1,n_1)
  \sum_{m'\sim M_1}|\lambda_f(m')|^2\sum_{q_3\leq R/q_1}q_3
  \sum_{\substack{n_2\sim \tilde{N}_2\\ q_3|n_2}}C(\tilde{N}_2)
  \\
  & \cdot \sum_{q_4\sim R/q_1q_3} \sum_{\ell\in\mathcal{L}}|A_{\pi}(1,\ell)|
  \left(\frac{R}{q_1}+\frac{M_1q_3}{(d_1,n_1)}\right)
  \\
  & \cdot
  \sum_{d_0|(q_3,q_4)}\sum_{\substack{d_3|q_3\\(d_3,q_4)=1}}
  \sum_{\substack{d_4|q_4\\(d_4,q_3)=1}}\sum_{d_3'|q_3}\sum_{\ell'\in\mathcal{L}}|A_\pi(1,\ell')|
  \delta_{q_3q_4\ell'|m'n_2}
  \sum_{q_4'\sim R/q_1q_3}\sum_{d_4'|q_4'}q_4'.
\end{align*}
Now, we estimate the last two sums trivially, and then use the condition $\delta_{q_3q_4\ell'|m'n_2}$
together with \eqref{eqn:RS3} and \eqref{eqn:Bound A(r,n)}, obtaining
\begin{align*}
  &\sum_{q_4\sim R/q_1q_3} \sum_{\ell\in\mathcal{L}}|A_{\pi}(1,\ell)|
  \left(\frac{R}{q_1}+\frac{M_1q_3}{(d_1,n_1)}\right)
  \\
  & \cdot
  \sum_{d_0|(q_3,q_4)}\sum_{\substack{d_3|q_3\\(d_3,q_4)=1}}
  \sum_{\substack{d_4|q_4\\(d_4,q_3)=1}}\sum_{d_3'|q_3}\sum_{\ell'\in\mathcal{L}}|A_\pi(1,\ell')|
  \delta_{q_3q_4\ell'|m'n_2}
  \sum_{q_4'\sim R/q_1q_3}\sum_{d_4'|q_4'}q_4'
  \\
  & \ll \frac{R^2L^{1+\theta_3}}{q_1^2q_3^2}\left(\frac{R}{q_1}+\frac{M_1q_3}{(d_1,n_1)}\right),
\end{align*}
where $\theta_3\leq \frac{5}{14}$.
Therefore, it follows that
\begin{align}\label{Omega2}
\begin{split}
  \Omega_{\mathbf{a},12} \ll & \sup_{\substack{1\ll\tilde{N}_2\ll N_2\\ \textup{dyadic}}}\frac{N^\varepsilon q_1^4rN_0L^{2/3}M^{5/2}}{n_1^3M_1^{1/2}R^3}\sum_{d_1|q_1}(d_1,n_1)
  \sum_{m\sim M_1}|\lambda_f(m')|^2\sum_{q_3\leq R/q_1}q_3
  \sum_{\substack{n_2\sim \tilde{N}_2\\ q_3|n_2}}C(\tilde{N}_2)
  \\
  & \cdot \frac{R^2L^{1+\theta_3}}{q_1^2q_3^2}\left(\frac{R}{q_1}+\frac{M_1q_3}{(d_1,n_1)}\right)
  \\
  \ll & \sup_{\substack{1\ll\tilde{N}_2\ll N_2\\ \textup{dyadic}}}
  \frac{q_1^3rN^\varepsilon N_0M^{5/2}M_1^{1/2}L^{5/3+\theta_3}
  \tilde{N}_2C_1(\tilde{N}_2)}{n_1^3R}
  (R+M_1).
\end{split}
\end{align}
Recall
\begin{align}\label{parameters}
  \begin{split}
     & Q=\left(\frac{NL}{MK}\right)^{1/2},\quad N_0=\frac{N^2L^2X^3r}{Q^3}, \quad N_2=\frac{Q^2Rn_1}{NLX^2q_1}t^\varepsilon, \\
     & N_2'=t^\varepsilon\left(\frac{NLn_1}{M^2Rt^2q_1}+\frac{R^2Q^3Mn_1}{N^2L^2X^3q_1}\right),\quad
     N\ll \frac{(Mt)^{3+\varepsilon}}{r^2}.
  \end{split}
\end{align}
For $\frac{NLX}{MRQ}\ll t^{1-\varepsilon}$, we have $M_1\ll \frac{t^2R^2M^2}{N}$.
By taking $L=M^{1/4}$ and $K=t^{4/5}$, one has
$R+M_1\ll \frac{t^2M^2RQ}{N}$.
Hence, by applying these bounds into \eqref{Omega1} and \eqref{Omega2}, we derive that
\begin{align*}
  \Omega_{\mathbf{a},1}\ll \sup_{\substack{1\ll\tilde{N}_2\ll N_2\\ \textup{dyadic}}}
  \frac{N^\varepsilon q_1^3rN_0M^{11/2}RQt^3L^{8/3}\tilde{N}_2C(\tilde{N}_2)}{n_1^3N^{3/2}}.
\end{align*}
Combining this together with \eqref{C(N2)} and \eqref{parameters}, we get
\begin{align*}
  \Omega_{\mathbf{a},1}\ll \frac{N^\varepsilon q_1^2r^2}{n_1^2}
  \left(\frac{Q^3L^{19/6}M^6t^2}{N} +
  \frac{N^{3/2}L^{17/3}M^{7/2}}{Q^2} + \frac{Q^4L^{8/3}M^{13/2}t^2}{N^{3/2}}\right).
\end{align*}
For $\frac{NLX}{MRQ}\gg t^{1-\varepsilon}$, we have $M_1\ll\frac{NL^2X^2}{Q^2}$
and $R\ll \frac{NLX}{MQt^{1-\varepsilon}}$. Thus, in this case, we arrive at
\begin{align*}
  \Omega_{\mathbf{a},1}\ll \frac{N^\varepsilon q_1^2r^2}{n_1^2}
  \left(\frac{N^{5/2}L^{17/3}M^{3/2}}{Q^3t^2} +
  \frac{N^{5/2}L^{20/3}M^{5/2}}{Q^{4}t}\right).
\end{align*}

Therefore, the contribution from $\Omega_{\mathbf{a},1}$ to
\eqref{eqn:led to estimate} is
\begin{multline}\label{eqn:contribiton case a}
  \ll \frac{N^{3/4+\varepsilon}X^{1/2}}{M^{2}L^{4/3} Q^{1/2}r^{1/2}}
   \sum_{n_1\ll RMr} n_1^{\theta_3}\sum_{\frac{n_1}{(n_1,r)}|q_1|(rn_1)^\infty}
   \frac{r}{n_1q_1^{1/2}}\left(\frac{Q^3L^{19/6}M^6t^2}{N} \right.
  \\
  \left.
  + \frac{N^{3/2}L^{17/3}M^{7/2}}{Q^2} + \frac{Q^4L^{8/3}M^{13/2}t^2}{N^{3/2}} +
  \frac{N^{5/2}L^{17/3}M^{3/2}}{Q^3t^2} +
  \frac{N^{5/2}L^{20/3}M^{5/2}}{Q^{4}t}\right)^{1/2}
  \\
  \ll r^{1/2}N^{3/4+\varepsilon}M^{1/2}L^{3/4}\left(\frac{t}{K^{1/2}}+
  K^{3/4} + \frac{K^{5/4}}{t^{1/2}}\right)
  + r^{1/2}N^{1+\varepsilon}\frac{L^{1/2}K}{M^{1/4}t}.
\end{multline}


\subsubsection{Case b}
By the same arguments, we obtain
\begin{align*}
  \Omega_{\mathbf{b},1}\ll \sup_{\substack{1\ll\tilde{N}_2\ll N_2\\ \textup{dyadic}}}
  \frac{N^\varepsilon q_1^4rN_0M^{5/2}M_1^{1/2}L^{8/3}\tilde{N}_2C(\tilde{N}_2)}{n_1^3R^2}
  (R+M_1),
\end{align*}
where $M_1\ll\frac{M^2R^2t^\varepsilon}{N}$ and $R\asymp\frac{NLX}{MQt}$.
So we see that
\begin{align*}
   \Omega_{\mathbf{b},1}\ll \frac{N^\varepsilon q_1^3r^2}{n_1^2}
  \left(\frac{N^{3/2}L^{14/3}M^{5/2}}{Q^2t^2} +
  \frac{N^{3/2}L^{17/3}M^{7/2}}{Q^{3}t^3}\right),
\end{align*}
which contributes \eqref{eqn:led to estimate case b} at most
\begin{multline}\label{eqn:contribiton case b}
  \ll \frac{N^{1+\varepsilon}X^{1/2}r^{1/2}}{M^{5/2}L^{4/3}Q^{1/2}}
  \sum_{n_1\ll RMr}\frac{1}{n_1^{1-\theta_3}}
  \sum_{\frac{n_1}{(n_1,r)}|q_1|n_1^\infty}\frac{1}{q_1^{1/2}}
  \left(\frac{N^{3/2}L^{14/3}M^{5/2}}{Q^2t^2} +
  \frac{N^{3/2}L^{17/3}M^{7/2}}{Q^{3}t^3}\right)^{1/2}
  \\
  \ll \frac{N^{1+\varepsilon}r^{1/2}L^{1/4}K^{3/4}}{M^{1/2}t}
  + \frac{N^{3/4+\varepsilon}r^{1/2}L^{1/2}M^{1/4}K}{t^{3/2}}.
\end{multline}

\subsubsection{Case c}
Similarly, by the same treatment and the results in \S 5.3, we have
\begin{align*}
  \Omega_{\mathbf{c},1}\ll
  \frac{N^\varepsilon q_1^4rN_0M^{5/2}M_1^{1/2}L^{3}\tilde{N}_2}{n_1^3R^2}
  (R+M_1),
\end{align*}
where $N_0\ll \frac{R^3M^3r}{LN}t^\varepsilon$, $\tilde{N}_2=\frac{MrR^2n_1}{q_1N_0}$
and $M_1=\frac{R^2M^2t^2}{N}$.
It is easy to see that
\[
  \Omega_{\mathbf{c},1}\ll \frac{q_1^{3}r^2}{n_1^2}\left(\frac{R^3M^{13/2}t^3L^3}{N^{3/2}}
  + \frac{R^2M^{9/2}tL^3}{N^{1/2}}\right).
\]
Notice that $X\ll\frac{MRQ}{NL}t^\varepsilon$ now.
Hence, the contribution from $\Omega_{\mathbf{c},1}$ to \eqref{eqn:led to estimate c} is
\begin{multline}\label{eqn:contribiton case c}
  \ll \frac{N^{5/4+\varepsilon}X r^{1/2}}{ M^{5/2}LQ }  \frac{1}{t^{1/2}}
  \sum_{\eta_1=\pm 1} \sum_{n_1 \leq Rr} \frac{1}{n_1^{1-\theta_3}}
   \sum_{\frac{n_1}{(n_1,r)}\mid q_1\mid (rn_1)^\infty} \frac{1}{q_1^{1/2}}
   \left(\frac{R^3M^{13/2}t^3L^3}{N^{3/2}}
  + \frac{R^2M^{9/2}tL^3}{N^{1/2}}\right)^{1/2}
  \\
  \ll \frac{r^{1/2}N^{3/4}M^{1/2}L^{3/4}t}{K^{5/4}} + \frac{r^{1/2}N L^{1/2} }{M^{1/4}K}.
\end{multline}
\subsection{$n_2\equiv 0\; (\mod M)$, $n_2\neq0$}
Denote the contribution of this part to $\Omega$ by $\Omega_{2}$.
By the congruence condition $q_2'\bar{\alpha}-q_2\bar{\alpha'}\equiv-n_2 \; (\mod M)$,
we have $\alpha'\equiv \bar{q_2'}q_2\alpha \; (\mod M)$.
Hence,
\begin{align*}
  \mathfrak{C}(n_2)\ll |\mathfrak{C}_1(n_2)||\mathfrak{C}_2(n_2)||\mathfrak{C}_3(n_2)|,
\end{align*}
where $\mathfrak{C}_2(n_2)$ and $\mathfrak{C}_3(n_2)$ are defined as in \S 7.1,
and
\begin{align*} 
  \mathfrak{C}_1(n_2)& = \;\sideset{}{^\star}\sum_{b \bmod M }
  \;\sideset{}{^\star}\sum_{b' \bmod M }
  \left(\sum_{\substack{u \bmod M  \\ u\neq b}}\bar{\chi}(u)e\left(\frac{m\overline{q_2^2\ell(b-u)}}{M}\right)\right)
  \\
  & \quad \cdot \left(\sum_{\substack{u' \bmod M  \\ u'\neq b'}}\chi(u')e\left(-\frac{m'\overline{q_2'^2\ell'(b'-u')}}{M}\right)\right)
  \left( \; \sideset{}{^\star}\sum_{\substack{\alpha  \bmod M }}
  e\left(\frac{\alpha\overline{bq_2^2}-\alpha q_2\overline{b'q_2'^3}}{M}\right)\right)
\end{align*}
Note that the innermost $\alpha$-sum is a Ramanujan sum. We get
\begin{align*} 
  \mathfrak{C}_1(n_2)
  & \ll M \bigg| \; \sideset{}{^\star}\sum_{b \bmod M }
  \bigg(\sum_{\substack{u\bmod M \\ u\neq b}}\bar{\chi}(u)e\left(\frac{n\overline{q_2^2\ell(b-u)}}{M}\right)\bigg)
  \\ & \hskip 90pt \cdot
  \bigg(\sum_{\substack{u'\bmod M \\ u'\neq bq_2^3\bar{q_2'}^3}}\chi(u') e\left(\frac{n'\overline{q_2'^2\ell'(bq_2^3\bar{q_2'}^3-u')}}{M}\right)\bigg)\bigg|
  \\
  & \qquad
  + \bigg| \; \sideset{}{^\star}\sum_{b \bmod M }
  \sideset{}{^\star}\sum_{\substack{b' \bmod M \\ b' \not\equiv  bq_2^3\bar{q_2'}^3 \bmod M }}
  \bigg(\sum_{\substack{u\bmod M \\ u\neq b}}\bar{\chi}(u)e\left(\frac{n\overline{q_2^2\ell(b-u)}}{M}\right)\bigg)
  \\ & \hskip 90pt \cdot
  \bigg(\sum_{\substack{u'\bmod M \\ u'\neq b' }}\chi(u') e\left(\frac{n'\overline{q_2'^2\ell'(b'-u')}}{M}\right)\bigg)\bigg|.
\end{align*}
As in \cite[\S 6.2]{Sharma2019}, there is a square root cancellation in the sum over $u$ and $u'$,
so we arrive at
\[
  \mathfrak{C}_1(n_2)\ll M^3.
\]
Therefore, by the same treatment as in \S 7.1 together with the condition $n_2\equiv 0 \; (\bmod M)$,
we can get a better result than that in \S 7.1.

Combining the above argument together with , the contribution of the non-zero frequencies can be dominated by
\begin{multline}\label{eqn:contribution non-zero frequencies}
  \ll r^{1/2}N^{3/4+\varepsilon}M^{1/2}L^{3/4}\left(\frac{t}{K^{1/2}}+
  K^{3/4} + \frac{K^{5/4}}{t^{1/2}}\right)
  + \frac{r^{1/2}N^{1+\varepsilon}L^{1/2}}{M^{1/4}}\left(\frac{K}{t}+\frac{1}{K}\right).
\end{multline}

\section{Proof of Proposition 3.1}\label{section:proof of Proposition 3.1}

Now we are ready to give an upper bound for $S_{11}^\pm(N,X,R)$ when $(r,M)=1$.
By \eqref{eqn:contribution zero frequency} and \eqref{eqn:contribution non-zero frequencies},
we get
\begin{multline*}
  S_{11}^\pm(N,X,R)\ll r^{1/2}  N^{1/2+\varepsilon} L^{1/2} M^{5/4} (K^{3/2} + t)
   +
    \frac{r^{1/2}N^{3/4+\varepsilon}  M^{3/4} }
    { L^{1/4}}\left(K^{3/4}+\frac{K^{5/4}}{t^{1/2}}\right)
   \\
   +r^{1/2}N^{3/4+\varepsilon}M^{1/2}L^{3/4}\left(\frac{t}{K^{1/2}}+
  K^{3/4} + \frac{K^{5/4}}{t^{1/2}}\right)
  + \frac{r^{1/2}N^{1+\varepsilon}L^{1/2}}{M^{1/4}}\left(\frac{K}{t}+\frac{1}{K}\right).
\end{multline*}
By taking
\begin{align*}
  L=M^{1/4},\quad K=t^{4/5},
\end{align*}
we deduce that
\begin{align*}
  S_{11}^\pm(N,X,R) \ll N^{1/2+\varepsilon}M^{3/2-1/16}t^{3/2-3/20},
\end{align*}
provided that $N\ll (Mt)^{3+\varepsilon}/r^2$ and $r\ll M^{1/8}t^{3/10}$.

As we point out in \S \ref{sec:reduction}, all the other cases
(such as $(r,M)>1$, $S_{12}^\pm(N,X,R)$, $S_{13}^\pm(N,X,R)$, $S_2(N)$, $S_3(N)$) are similar and in fact
easier. Hence, we finally prove Proposition \ref{reduction prop}.


\section*{Acknowledgements}
We would like to thank Yongxiao Lin and  Qingfeng Sun for helpful discussions and comments.
We are grateful to the referee for his/her very helpful comments and suggestions.


\end{document}